\newtheorem{Theorem}{Theorem}[section]
\newtheorem{Lemma}[Theorem]{Lemma}
\newtheorem{Corollary}[Theorem]{Corollary}
\theoremstyle{definition}
\newtheorem{Example}[Theorem]{Example}
\newtheorem{Problem}[Theorem]{Problem}
\newtheorem{Question}[Theorem]{Question}
\theoremstyle{remark}
\newtheorem{Remark}[Theorem]{Remark}
\def\@thmcountersep{-}
\numberwithin{equation}{section}
\begin{document} 

\title[Generalization of the Conway-Gordon theorem]{Generalization of the Conway-Gordon theorem and intrinsic linking on complete graphs}

\author{Hiroko Morishita}
\address{Division of Mathematics, Graduate School of Science, Tokyo Woman's Christian University, 2-6-1 Zempukuji, Suginami-ku, Tokyo 167-8585, Japan}
\email{d17m104@cis.twcu.ac.jp}

\author{Ryo Nikkuni}
\address{Department of Mathematics, School of Arts and Sciences, Tokyo Woman's Christian University, 2-6-1 Zempukuji, Suginami-ku, Tokyo 167-8585, Japan}
\email{nick@lab.twcu.ac.jp}
\thanks{The second author was supported by JSPS KAKENHI Grant Numbers JP15K04881 and JP19K03500.}

\subjclass{Primary 57M15; Secondary 57K10}

\date{}

\dedicatory{This article is dedicated to Professor Yoshiyuki Ohyama on his 60th birthday.}

\keywords{Spatial graphs, Conway-Gordon theorems}

\begin{abstract}
Conway and Gordon proved that for every spatial complete graph on six vertices, the sum of the linking numbers over all of the constituent two-component links is odd, and Kazakov and Korablev proved that for every spatial complete graph with arbitrary number of vertices greater than six, the sum of the linking numbers over all of the constituent two-component Hamiltonian links is even. In this paper, we show that for every spatial complete graph whose number of vertices is greater than six, the sum of the square of the linking numbers over all of the two-component Hamiltonian links is determined explicitly in terms of the sum over all of the triangle-triangle constituent links. As an application, we show that if the number of vertices is sufficiently large then every spatial complete graph contains a two-component Hamiltonian link whose absolute value of the linking number is arbitrary large. Some applications to rectilinear spatial complete graphs are also given. 
\end{abstract}

\maketitle

\section{Introduction} 

Throughout this paper we work in the piecewise linear category. Let $G$ be a finite graph. An embedding $f$ of $G$ into ${\mathbb R}^{3}$ is called a {\it spatial embedding} of $G$, and the image $f(G)$ is called a {\it spatial graph} of $G$. Two spatial embeddings $f$ and $g$ of $G$ are said to be {\it equivalent} if there exists a self homeomorphism $\Phi$ on ${\mathbb R}^{3}$ such that $\Phi(f(G)) = g(G)$. Let $\gamma$ be a subgraph of $G$ homeomorphic to the circle. We call $\gamma$ a {\it cycle} of $G$. For a cycle $\gamma$ (resp. a disjoint union of cycles $\lambda$) and a spatial embedding $f$ of $G$, we call $f(\gamma)$ (resp. $f(\lambda)$) a {\it constituent knot} (resp. a {\it constituent link}) of the spatial graph $f(G)$. In particular, we say that a constituent knot (or link) of a spatial graph $f(G)$ is {\it Hamiltonian} if it contains all vertices of $f(G)$. 

Let $K_{n}$ be the {\it complete graph} on $n$ vertices, that is the graph consisting of $n$ vertices $1,2,\ldots,n$ such that each pair of its distinct vertices $i$ and $j$ is connected by exactly one edge $\overline{ij}$. Then the following fact is well-known as the {\it Conway-Gordon theorems}. 

\begin{Theorem}\label{CG} {\rm (Conway-Gordon \cite{CG83})} 
\begin{enumerate}
\item For any spatial embedding $f$ of $K_{6}$, the sum of the linking numbers over all of the constituent $2$-component Hamiltonian links of $f(K_{6})$ is odd. 
\item For any spatial embedding $f$ of $K_{7}$, the sum of the second coefficients of the Conway polynomials over all of the constituent Hamiltonian knots of $f(K_{7})$ is odd. 
\end{enumerate}
\end{Theorem}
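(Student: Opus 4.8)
The plan is to prove both parts by the same two-step method: first show the relevant $\mathbb{Z}/2$-valued quantity is unchanged under a crossing change on the spatial graph, and then compute it on one explicit embedding. Throughout I use that any two spatial embeddings of a graph are related by an ambient isotopy together with a finite sequence of crossing changes, each performed at a crossing between two edges that may coincide, may be adjacent, or may be vertex-disjoint.

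For (1), put $\lambda(f)\equiv\sum_{L}\operatorname{lk}(f(L))\pmod 2$, where $L$ runs over the ten splittings of $K_6$ into two disjoint triangles. It suffices to check that a crossing change between edges $e,e'$ leaves $\lambda$ unchanged. If $e=e'$, or if $e$ and $e'$ share a vertex, then no splitting has $e$ in one triangle and $e'$ in the other, so every constituent linking number — hence $\lambda$ — is unaffected. If $e=\overline{ij}$ and $e'=\overline{kl}$ are vertex-disjoint, then $K_6$ has exactly two further vertices $m,n$, and precisely the splittings corresponding to the partitions $\{i,j,m\}\sqcup\{k,l,n\}$ and $\{i,j,n\}\sqcup\{k,l,m\}$ have $e$ and $e'$ in different triangles; the crossing change alters each of these two linking numbers by $\pm1$, so $\lambda$ changes by an even integer. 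Thus $\lambda$ is an invariant of spatial $K_6$, and evaluating it on the rectilinear embedding with vertices in convex position, for which exactly one constituent link is a Hopf link and the remaining nine are trivial, gives $\lambda\equiv1$.

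For (2), put $\omega(f)\equiv\sum_{\gamma}a_2(f(\gamma))\pmod 2$, where $\gamma$ runs over the Hamiltonian cycles of $K_7$ and $a_2$ denotes the second coefficient of the Conway polynomial (equivalently, the Arf invariant). The main tool is the skein identity $a_2(K_+)-a_2(K_-)=\operatorname{lk}(L_0)$, where $L_0$ is the two-component link obtained from the knot diagram by the oriented smoothing at the crossing: under a crossing change between edges $e$ and $e'$, only a Hamiltonian cycle $\gamma$ containing both $e$ and $e'$ has its knot type changed, and then $a_2(f(\gamma))$ changes by $\pm\operatorname{lk}(L_0(\gamma))$. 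When $e=e'$, or when $e$ and $e'$ are adjacent, the oriented smoothing pinches off an auxiliary component $\ell$ depending only on $e,e'$ and the chosen crossing, not on $\gamma$, so that $\sum_\gamma\operatorname{lk}(L_0(\gamma))=\operatorname{lk}\bigl(\sum_\gamma C(\gamma),\,\ell\bigr)$, where $C(\gamma)$ is the other component of $L_0(\gamma)$; since every edge of $K_7$ lies in an even number of the affected Hamiltonian cycles, the $1$-cycle $\sum_\gamma C(\gamma)$ is null-homologous modulo $2$ in the complement of $\ell$, so this linking number is even and $\omega$ is unaffected.

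The remaining and, I expect, hardest case is a crossing change between two vertex-disjoint edges $e=\overline{ab}$ and $e'=\overline{cd}$. Here each $L_0(\gamma)$ is a two-component link whose components are cycles covering all seven vertices, and one must show $\sum_{\gamma}\operatorname{lk}(L_0(\gamma))\equiv0\pmod 2$, the sum being over the Hamiltonian cycles containing $e$ and $e'$. The obstacle is that now both components of $L_0(\gamma)$ vary with $\gamma$, so the linearity argument above does not apply directly; I would instead analyze carefully how this family of links is assembled from $e$, $e'$, the two crossing arcs, and the paths through the remaining three vertices, again reducing to statements that certain edges and certain pairs of edges occur an even number of times — or, alternatively, compare $\omega$ with $\mu(f):=\sum_{\{T,T'\}}\operatorname{lk}(f(T),f(T'))\pmod 2$ over all pairs of disjoint triangles of $K_7$, which equals $7\equiv1\pmod 2$ by applying part (1) to each of the seven subgraphs $K_7-i\cong K_6$. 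Granting that $\omega$ is an invariant of spatial $K_7$, one concludes by evaluating it on the rectilinear embedding with vertices in convex position, whose only nontrivial Hamiltonian knot is a trefoil, so that $\omega\equiv1$.
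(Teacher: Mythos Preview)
First, a point of context: the paper does not supply its own proof of Theorem~\ref{CG}; it is quoted from Conway--Gordon \cite{CG83} and used as input throughout. So your argument has to stand on its own.

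Your proof of (1) is complete and correct, and is essentially Conway and Gordon's original argument.

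Your proof of (2) has a genuine gap, which you yourself flag: the case of a crossing change at two vertex-disjoint edges $e=\overline{ab}$, $e'=\overline{cd}$ is left unfinished, and you proceed by ``granting that $\omega$ is an invariant.'' This case is not a routine extension of the previous two. For a Hamiltonian cycle $\gamma$ containing both $e$ and $e'$, the oriented smoothing $L_{0}(\gamma)$ has \emph{both} components depending on $\gamma$: they are the two arcs of $\gamma\setminus(e\cup e')$ (which together carry the three remaining vertices in some distribution) closed up through the smoothing, and moreover which of the two local smoothings is the oriented one depends on whether $\gamma$ traverses $e'$ as $c\to d$ or as $d\to c$. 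Thus the sum $\sum_{\gamma}\operatorname{lk}(L_{0}(\gamma))$ is bilinear in two varying families of curves, and your mod~$2$ linearity trick against a fixed component $\ell$ is no longer available. One must carry out an honest combinatorial analysis of the $48$ affected Hamiltonian cycles to see that this sum is even; that is the step you have not supplied. Your proposed alternative --- comparing $\omega$ with $\mu(f)=\sum_{\{T,T'\}}\operatorname{lk}(f(T),f(T'))$ --- only establishes $\mu(f)\equiv 1$; it yields nothing about $\omega(f)$ without an independent identity relating the two (such as the $n=7$ case of Theorem~\ref{mainthm}), which is at least as hard as what you are trying to avoid.
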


Theorem \ref{CG} also implies that every spatial graph of $K_{6}$ contains a nonsplittable $2$-component link, and every spatial graph of $K_{7}$ contains a nontrivial knot. Furthermore, the authors gave in \cite{MN19} a generalized Conway-Gordon theorem for $K_{n}$ with arbitrary number of vertices $n\ge 6$ as follows, where the case of $n = 6$ has been proven by the second author in \cite[Theorem 1.3]{Nikkuni09}.

\begin{Theorem}\label{mainthm} {\rm (Morishita-Nikkuni \cite[Theorem 1.3]{MN19})} 
Let $n\ge 6$ be an integer. For any spatial embedding $f$ of $K_{n}$, we have 
\begin{eqnarray*}\label{maintheorem}
&&\sum_{\gamma\in \Gamma_{n}(K_{n})}a_{2}(f(\gamma))
- (n-5)!\sum_{\gamma\in \Gamma_{5}(K_{n})}a_{2}(f(\gamma))\\
&=& 
\frac{(n-5)!}{2} 
\bigg(
\sum_{\lambda\in \Gamma_{3,3}(K_{n})}{{\rm lk}(f(\lambda))}^{2}
- \binom{n-1}{5}
\bigg),\nonumber 
\end{eqnarray*}
where {\rm lk} denotes the linking number and $a_{2}$ denotes the second coefficient of the Conway polynomial. 
\end{Theorem}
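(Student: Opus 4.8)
The plan is to prove the identity by an induction-free argument based on a count of how each "small" configuration contributes to the Hamiltonian knot sum, combined with the already-known Conway–Gordon formula for $K_6$ together with its $a_2$-refinement (Theorem 1.3 of Nikkuni09, quoted here as the $n=6$ case). First I would set up the standard bookkeeping: for a spatial embedding $f$ of $K_n$ and for each $6$-element subset $S$ of the vertex set, the induced subgraph $f(K_n)|_S$ is a spatial $K_6$, and Nikkuni's refined Conway–Gordon formula applies to it, expressing $\sum_{\gamma\in\Gamma_6(S)}a_2(f(\gamma)) - \sum_{\gamma\in\Gamma_5(S)}a_2(f(\gamma))$ in terms of $\tfrac12\big(\sum_{\lambda\in\Gamma_{3,3}(S)}{\rm lk}(f(\lambda))^2 - 1\big)$ or the analogous constant. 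Summing this identity over all $\binom{n}{6}$ choices of $S$ produces, on the right-hand side, the global sum $\sum_{\lambda\in\Gamma_{3,3}(K_n)}{\rm lk}(f(\lambda))^2$ up to a multiplicity that counts how many $6$-subsets $S$ contain a given pair of disjoint triangles (namely exactly one, since two disjoint triangles already span $6$ vertices), and the constant term $\binom{n}{6}$, which should match $\binom{n-1}{5}$ after the arithmetic is arranged; the key combinatorial point is that $\Gamma_{3,3}(S)$ for the various $S$ tile $\Gamma_{3,3}(K_n)$ exactly once.

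The heart of the matter is the left-hand side: I must show that $\sum_S\big(\sum_{\gamma\in\Gamma_6(S)}a_2(f(\gamma)) - \sum_{\gamma\in\Gamma_5(S)}a_2(f(\gamma))\big)$ equals a fixed linear combination of $\sum_{\gamma\in\Gamma_n(K_n)}a_2(f(\gamma))$ and $\sum_{\gamma\in\Gamma_5(K_n)}a_2(f(\gamma))$ with the stated coefficients $1$ and $-(n-5)!$. For the $\Gamma_5$ part this is immediate: a $5$-cycle lies in $\binom{n-5}{1}$ of the $6$-subsets $S$, so $\sum_S\sum_{\gamma\in\Gamma_5(S)}a_2(f(\gamma)) = (n-5)\sum_{\gamma\in\Gamma_5(K_n)}a_2(f(\gamma))$. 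The $\Gamma_6$ part is more delicate: a $6$-cycle $\gamma$ in $K_n$ is Hamiltonian for exactly one $6$-subset (the set of its own vertices), so $\sum_S\sum_{\gamma\in\Gamma_6(S)}a_2(f(\gamma)) = \sum_{\gamma\in\Gamma_6(K_n)}a_2(f(\gamma))$, which is the $n=6$ Hamiltonian sum — not the $\Gamma_n$ sum we want. So a single application of Conway–Gordon on $K_6$'s is not enough; I will instead need a second, independent recursion relating $\Gamma_k$-sums to $\Gamma_{k-1}$-sums for each $k$, and then telescope. The cleanest route is to prove an auxiliary lemma: for $k\ge 6$ and any spatial $K_k$, $\sum_{\gamma\in\Gamma_k}a_2(f(\gamma)) - (k-5)!\sum_{\gamma\in\Gamma_5}a_2(f(\gamma))$ equals $\tfrac{(k-5)!}{2}\big(\sum_{\lambda\in\Gamma_{3,3}}{\rm lk}(f(\lambda))^2 - \binom{k-1}{5}\big)$, proved by induction on $k$; the inductive step passes from $K_{k}$ to $K_{k+1}$ by summing the $K_k$-identity over all $(k)$-subsets (equivalently deleting one vertex at a time) and using that a Hamiltonian $k$-cycle in $K_{k+1}$ extends in a controlled number of ways, together with the fact that each disjoint pair of triangles in $K_{k+1}$ survives in exactly $k-5$ of the vertex-deleted $K_k$'s. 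Tracking these multiplicities and checking that $(k-5)\cdot\binom{k-1}{5}$-type terms reassemble into $\binom{k}{5}$ via Pascal's rule is the bulk of the computation.

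The main obstacle I anticipate is precisely this multiplicity/telescoping bookkeeping on the $a_2$-side: the second coefficient of the Conway polynomial is not additive under the graph operations in any naive way, so I cannot simply "sum $a_2$ over deleted-vertex subgraphs and read off the answer" — instead I must isolate the combinatorial identity that each Hamiltonian cycle of $K_{k+1}$ either does or does not survive in a given $K_k$, weight accordingly, and then verify the binomial identity $(k-4)\binom{k-1}{5}+\text{(correction)}=\binom{k}{5}$ that makes the constants line up. A secondary subtlety is orientation: ${\rm lk}(f(\lambda))^2$ is orientation-independent, so no sign conventions leak in there, but I must fix orientations consistently when invoking Nikkuni's $K_6$ formula and when comparing $\Gamma_{3,3}$ across subgraphs. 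Once the auxiliary lemma is in hand, the theorem is just its $k=n$ instance, so essentially all the work is in the lemma's inductive step.
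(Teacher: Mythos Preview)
This paper does not itself prove Theorem~\ref{mainthm}; the result is quoted from \cite{MN19} and used as a tool. So there is no proof in the present paper to compare against directly. That said, the machinery the authors deploy in \S2--3 (in particular the subgraphs $F_{ij}^{(m)}$ and $F_{ij}^{(kl)}$) is exactly the machinery of \cite{MN19}, and your proposal is missing the one idea that makes that machinery work.

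Your inductive step does not close. Summing the $K_{k}$-identity over all vertex-deleted subgraphs $K_{k}^{(m)}\subset K_{k+1}$ yields on the left
\[
\sum_{\gamma\in\Gamma_{k}(K_{k+1})}a_{2}(f(\gamma))\;-\;(k-5)!\,(k-4)\sum_{\gamma\in\Gamma_{5}(K_{k+1})}a_{2}(f(\gamma)),
\]
because a $k$-cycle of $K_{k+1}$ is Hamiltonian in exactly one $K_{k}^{(m)}$ and a $5$-cycle survives in $k-4$ of them. This produces the $\Gamma_{k}(K_{k+1})$ sum, not the $\Gamma_{k+1}(K_{k+1})$ sum you need. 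Your phrase ``a Hamiltonian $k$-cycle in $K_{k+1}$ extends in a controlled number of ways'' is a statement about cycles as subgraphs, not about their $a_{2}$-values; there is no relation between $a_{2}$ of a $(k+1)$-cycle and the $a_{2}$'s of the $k$-cycles obtained by smoothing out one vertex, so no amount of combinatorial bookkeeping on vertex-deleted subgraphs will manufacture $\sum_{\Gamma_{k+1}}a_{2}$ out of $\sum_{\Gamma_{k}}a_{2}$.

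The missing device is the subdivision subgraph $F_{ij}^{(m)}$ introduced in \S2: one deletes all edges at $m$ except $\overline{mi}$ and $\overline{mj}$, and deletes $\overline{ij}$. This $F_{ij}^{(m)}$ is \emph{homeomorphic} to $K_{k}$ (the edge $\overline{ij}$ has been subdivided by $m$), so the $K_{k}$-identity applies to $f|_{F_{ij}^{(m)}}$. But now a ``Hamiltonian cycle'' of this topological $K_{k}$ that passes through the subdivided edge is, as a subset of $K_{k+1}$, a genuine $(k+1)$-cycle containing the path $\overline{imj}$. Summing over all $i<j$ and then over all $m$ therefore captures $\sum_{\Gamma_{k+1}(K_{k+1})}a_{2}$ with a computable multiplicity, together with lower-length contributions that can be cancelled against the vertex-deletion sums. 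This is precisely how the paper handles the analogous step in Lemma~\ref{413} and Lemma~\ref{L1}. Without this subdivision trick your induction cannot reach the Hamiltonian term, and the argument as written has a genuine gap.
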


Moreover, it has been also shown in \cite{MN19} that for a spatial embedding $f$ of $K_{n}$ with $n\ge 7$, the sum of $a_{2}$ over all of the constituent Hamiltonian knots of $f(K_{n})$ modulo $(n-5)!$ does not depend on the choice of $f$ and the modulo $(n-5)!$ reduction has been given explicitly as follows. 

\begin{Corollary}\label{maincor0} {\rm (\cite[Corollary 1.5]{MN19})} 
Let $n\ge 7$ be an integer. For any spatial embedding $f$ of $K_{n}$, we have the following congruence modulo $(n-5)!$: 
\begin{eqnarray*}
\sum_{\gamma\in \Gamma_{n}(K_{n})}a_{2}(f(\gamma)) 
\equiv 
\left\{
   \begin{array}{@{\,}lll}
   {\displaystyle \frac{(n-5)!}{2}} & (n\equiv 0,7\pmod{8}) \\
   0 & (n\not\equiv 0,7\pmod{8}). 
   \end{array}
\right.
\end{eqnarray*}
\end{Corollary}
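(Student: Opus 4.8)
The plan is to reduce Theorem~\ref{mainthm} modulo $(n-5)!$ and then pin down the parity of the term that survives. Since $n\ge 7$, the factorial $(n-5)!$ is even, so $(n-5)!/2$ is a positive integer, and Theorem~\ref{mainthm} immediately gives
\[
\sum_{\gamma\in\Gamma_{n}(K_{n})}a_{2}(f(\gamma))\ \equiv\ \frac{(n-5)!}{2}\left(\sum_{\lambda\in\Gamma_{3,3}(K_{n})}{\rm lk}(f(\lambda))^{2}-\binom{n-1}{5}\right)\pmod{(n-5)!}.
\]
For any integer $k$ one has $\tfrac{(n-5)!}{2}\,k\equiv \tfrac{(n-5)!}{2}$ or $0$ modulo $(n-5)!$ according to whether $k$ is odd or even, so the whole problem reduces to computing the parity of $\sum_{\lambda\in\Gamma_{3,3}(K_{n})}{\rm lk}(f(\lambda))^{2}-\binom{n-1}{5}$.

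First I would use $m^{2}\equiv m\pmod 2$ to replace the squared linking numbers by the linking numbers themselves, so it suffices to know $\sum_{\lambda\in\Gamma_{3,3}(K_{n})}{\rm lk}(f(\lambda))\pmod 2$. Next, group the pairs of disjoint triangles in $\Gamma_{3,3}(K_{n})$ by the $6$-element vertex set each one spans: there are $\binom{n}{6}$ such sets, and the ten pairs of disjoint triangles on a fixed $6$-set are precisely the constituent $2$-component Hamiltonian links of the corresponding copy of $K_{6}$, whose linking numbers sum to an odd number by Theorem~\ref{CG}(1). Summing over all $\binom{n}{6}$ vertex sets gives $\sum_{\lambda\in\Gamma_{3,3}(K_{n})}{\rm lk}(f(\lambda))\equiv\binom{n}{6}\pmod 2$, hence by Pascal's rule the parity we need is that of $\binom{n}{6}-\binom{n-1}{5}=\binom{n-1}{6}$.

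Finally I would determine when $\binom{n-1}{6}$ is odd. By Lucas' theorem (equivalently Kummer's theorem, counting carries in base $2$), $\binom{n-1}{6}$ is odd exactly when the binary expansion of $6=(110)_{2}$ is digitwise dominated by that of $n-1$, i.e.\ when the $2$'s and $4$'s bits of $n-1$ are both $1$; this happens precisely for $n-1\equiv 6,7\pmod 8$, that is $n\equiv 0,7\pmod 8$. Feeding this dichotomy back into the first displayed congruence yields the stated formula.

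The argument is essentially a calculation once Theorem~\ref{mainthm} is available, so I do not expect a genuine obstacle; the only points demanding care are the combinatorial bookkeeping that decomposes $\Gamma_{3,3}(K_{n})$ into $\binom{n}{6}$ copies of the $K_{6}$ situation (so that Theorem~\ref{CG}(1) can be applied block by block) and the elementary but easy-to-misstate $2$-adic analysis of $\binom{n-1}{6}$ that produces the condition $n\equiv 0,7\pmod 8$.
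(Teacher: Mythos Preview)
Your argument is correct. Note, however, that the present paper does not actually prove Corollary~\ref{maincor0}: it is quoted verbatim from \cite{MN19} as background, so there is no ``paper's own proof'' in this document to compare against directly.

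That said, the paper does prove the closely analogous Corollary~\ref{maincor1}, and it is worth contrasting that argument with yours. There the authors take two embeddings $f$ and $g$, subtract the corresponding identities, observe (as you do) that $\sum_{\lambda\in\Gamma_{3,3}(K_n)}{\rm lk}(f(\lambda))^{2}$ always has the parity of $\binom{n}{6}$, and then invoke the \emph{existence} of a concrete embedding (the canonical book presentation, Remark~\ref{ineq_sharp}) realising $\sum{\rm lk}^{2}=\binom{n}{6}$ to pin down the residue class; the condition $n\equiv 6,7\pmod 8$ then comes from the parity of $\binom{n}{6}$ alone. Your route is more intrinsic: you never appeal to a specific embedding, instead carrying the constant $\binom{n-1}{5}$ from Theorem~\ref{mainthm} through the reduction and using Pascal's identity $\binom{n}{6}-\binom{n-1}{5}=\binom{n-1}{6}$ together with Lucas' theorem to read off $n\equiv 0,7\pmod 8$ directly. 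Both approaches rest on the same Conway--Gordon parity computation $\sum_{\lambda\in\Gamma_{3,3}(K_n)}{\rm lk}(f(\lambda))\equiv\binom{n}{6}\pmod 2$; yours is a little more self-contained, while the paper's style makes the role of the extremal embedding explicit.
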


On the other hand, for linking numbers of constituent $2$-component Hamiltonian links of a spatial graph of $K_{n}$ with $n\ge 7$, the followings are known. 

\begin{Theorem}\label{KK} 
\begin{enumerate}
\item {\rm (Vesnin-Litvintseva \cite{YL10})} Let $n\ge 7$ be an integer. For any spatial embedding $f$ of $K_{n}$, there exists a constituent $2$-component Hamiltonian link of $f(K_{n})$ with odd linking number. 
\item {\rm (Kazakov-Korablev \cite{KK14})} 
Let $n\ge 7$ be an integer. For any spatial embedding $f$ of $K_{n}$, the sum of the linking numbers over all of the constituent $2$-component Hamiltonian links of $f(K_{n})$ is even. 
\end{enumerate}
\end{Theorem}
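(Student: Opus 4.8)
The plan is to prove the two assertions separately; part (1) will lean on the ideas used for part (2).

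\emph{Part (2).} Work with $\mathrm{lk}$ modulo $2$, i.e.\ with the class $[z]\in H_{1}(\mathbb{R}^{3}\setminus f(\gamma_{2});\mathbb{Z}/2)\cong\mathbb{Z}/2$ of a $1$-cycle $z$ disjoint from $f(\gamma_{2})$; this is additive in $z$. The main reduction is that, for a cycle $\gamma$ of $K_{n}$ on a vertex set $W$, a fan triangulation from $\min W$ produces triangles $T_{1},\dots,T_{\abs{W}-2}$ of $K_{n}$ with $[\gamma]=\sum_{i}[T_{i}]$ in $Z_{1}(K_{n};\mathbb{Z}/2)$. Applying this to both components of a constituent $2$-component Hamiltonian link $\lambda=\gamma_{1}\cup\gamma_{2}$ (the vertex sets being disjoint, so a triangle on one side stays disjoint from the other component and from every triangle on the other side) gives
\[
\mathrm{lk}(f(\lambda))\ \equiv\ \sum_{T\subset\gamma_{1}}\ \sum_{T'\subset\gamma_{2}}\ \mathrm{lk}(f(T\cup T'))\pmod{2},
\]
a sum of mod $2$ linking numbers of pairs of disjoint triangles of $K_{n}$. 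Summing over all such $\lambda$ and regrouping, $\sum_{\lambda}\mathrm{lk}(f(\lambda))$ equals modulo $2$ a combination $\sum_{\Delta\cup\Delta'}c_{\Delta\cup\Delta'}\,\mathrm{lk}(f(\Delta\cup\Delta'))$ over pairs of disjoint triangles, and counting the $(\lambda,(T,T'))$ contributing a fixed pair shows $c_{\Delta\cup\Delta'}$ to be a sum of factorial products whose only summand that can be odd equals $(n-5)!$; since $n\ge7$ this is even, so the whole sum is $\equiv0\pmod{2}$. (Alternatively, $\sum_{\lambda}\mathrm{lk}(f(\lambda))\bmod 2$ is a crossing-change invariant: a crossing change between two disjoint edges of $K_{n}$ changes it by $(n-5)(n-4)!\equiv0$, and one evaluates it on a standard embedding.)

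\emph{Part (1), the base case $n=7$.} Specialize Theorem~\ref{mainthm} to $n=7$. Since $(n-5)!\sum_{\gamma\in\Gamma_{5}(K_{7})}a_{2}(f(\gamma))$ and $\binom{6}{5}$ are even and $x^{2}\equiv x\pmod{2}$, the identity reduces to $\sum_{\gamma\in\Gamma_{7}(K_{7})}a_{2}(f(\gamma))\equiv\sum_{\lambda\in\Gamma_{3,3}(K_{7})}\mathrm{lk}(f(\lambda))\pmod{2}$, which by Theorem~\ref{CG}(2) is $\equiv1$. Now suppose, for contradiction, that every constituent $2$-component Hamiltonian link of $f(K_{7})$ — necessarily a triangle together with a square — has even linking number. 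For a $3$-subset $A$ with complement $B=\{a,b,c,d\}$, writing the three squares on $B$ as fans from $a$ and using $[bcd]\equiv[abc]+[abd]+[acd]$, the hypothesis forces $\mathrm{lk}(f(\Delta_{A}\cup T))$ to have a common parity over all four triangles $T$ on $B$. Hence $g(\Delta):=\mathrm{lk}(f(\Delta\cup T))\bmod 2$ (any triangle $T$ on the complement of $V(\Delta)$) is well defined on $3$-subsets and satisfies $g(\Delta)=g(\Delta')$ whenever $\Delta,\Delta'$ are disjoint. Since the Kneser graph $K(7,3)$ is connected, $g$ is constant, so all $70$ numbers $\mathrm{lk}(f(\lambda))$, $\lambda\in\Gamma_{3,3}(K_{7})$, share a parity and their sum is even — contradicting the previous sentence.

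\emph{Part (1), the inductive step.} For a spatial $K_{n+1}$ with $n+1\ge8$, restrict to a $K_{m}$ on $m$ of the vertices with $m\in\{n,n-1\}$ odd (so $m\ge7$); by the inductive hypothesis it has a constituent $2$-component Hamiltonian link $\gamma_{1}\cup\gamma_{2}$ of odd linking number, and $m$ odd forces one cycle, say $\gamma_{1}$, to have even length. Insert the $n+1-m\in\{1,2\}$ new vertices into $\gamma_{1}$ by subdividing edges: subdividing an edge $e$ by a new vertex $v$ adds the triangle $\Delta_{e,v}$ (on the endpoints of $e$ and $v$) to $[\gamma_{1}]$, hence changes $\mathrm{lk}(\gamma_{1}\cup\gamma_{2})$ by $\mathrm{lk}(\Delta_{e,v}\cup\gamma_{2})$; and because $\sum_{e\subset\gamma_{1}}[\Delta_{e,v}]=[\gamma_{1}]$, one has $\sum_{e\subset\gamma_{1}}\mathrm{lk}(\Delta_{e,v}\cup\gamma_{2})\equiv\mathrm{lk}(\gamma_{1}\cup\gamma_{2})\equiv1\pmod 2$, a sum over the even number $\abs{\gamma_{1}}$ of edges, so both $0$ and $1$ occur among those values. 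Choosing the subdivision edge (or, for two new vertices, two distinct subdivision edges with equal such values, which is possible since $\abs{\gamma_{1}}\ge4$) so that the net change vanishes mod $2$ produces a constituent $2$-component Hamiltonian link of $f(K_{n+1})$ with odd linking number; with the base case this proves (1) for all $n\ge7$. I expect the genuine obstacle to be the base case $n=7$ — upgrading "$\sum_{\lambda\in\Gamma_{3,3}(K_{7})}\mathrm{lk}(f(\lambda))$ is odd" to "some Hamiltonian link has odd linking number" — which needs the fan-triangulation identity together with the connectivity of $K(7,3)$ to exclude the degenerate all-even pattern; the inductive step is then elementary, the only point being that new vertices must be inserted two at a time, into an even-length component, exactly when a single insertion is blocked by parity.
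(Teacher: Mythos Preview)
The paper does not prove Theorem~\ref{KK}; it is quoted from \cite{YL10} and \cite{KK14} as background, and the paper only observes afterward that part~(2) is recovered by reducing its main identity (\ref{ilKK}) modulo~$2$, while (\ref{ilKK0}) yields the weaker ``nonzero linking number'' version of part~(1). So there is no proof in the paper to compare against directly; your argument stands on its own as an elementary route that does not pass through Theorem~\ref{lkrefine}.

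Your outline is essentially sound. For part~(2) the crossing-change alternative is clean: the number of Hamiltonian $2$-component links separating two fixed disjoint edges is indeed $(n-5)(n-4)!$ (each type $(p,q)$ with $p<q$ contributes $2(n-4)!$ and the type $(p,p)$ contributes $(n-4)!$), which is even for $n\ge 7$; you still owe the evaluation on one explicit embedding, but that is routine. The fan-triangulation alternative, by contrast, rests on the bare assertion that ``the only summand that can be odd equals $(n-5)!$'', which is not justified --- because the fan is taken from $\min W$, the coefficient $c_{\Delta\cup\Delta'}$ depends asymmetrically on the vertex labels, and you have not said what the ``summands'' are or why all but one are even. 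Either supply that count or drop this alternative in favour of the crossing-change one.

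The base case $n=7$ in part~(1) is correct and rather elegant. In the inductive step, however, the two-vertex insertion needs more care than you give it: the parities $\mathrm{lk}(f(\Delta_{e,v}\cup\gamma_{2}))$ and $\mathrm{lk}(f(\Delta_{e',w}\cup\gamma_{2}))$ come from \emph{two different} lists (one for each new vertex), so ``two distinct edges with equal such values'' is not a pigeonhole on a single list. What you actually need is $e\ne e'$ with $\mathrm{lk}(f(\Delta_{e,v}\cup\gamma_{2}))\equiv\mathrm{lk}(f(\Delta_{e',w}\cup\gamma_{2}))\pmod 2$. This is indeed always available --- each of the two lists has an odd sum over the even number $\lvert\gamma_{1}\rvert\ge 4$ of edges, hence takes both values $0$ and $1$, and a short case check then produces the desired pair of edges --- but you should say so. With that patch your induction goes through.
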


Our purposes in this paper are to refine Theorem \ref{KK} (2) by giving its integral lift in terms of the square of the linking number and to investigate the behavior of the nonsplittable $2$-component Hamiltonian links in a spatial complete graph. In the following, we call a cycle of $G$ containing exactly $p$ edges a {\it $p$-cycle} of $G$ and denote the set of all $p$-cycles of $G$ by $\Gamma_{p}(G)$. Moreover, we denote the set of all pairs of two disjoint cycles of $G$ consisting of a $p$-cycle and a $q$-cycle by $\Gamma_{p,q}(G)$. For a spatial embedding $f$ of $G$ and an element $\lambda$ in $\Gamma_{p,q}(G)$, we also call $f(\lambda)$ a constituent $2$-component link {\it of type $(p,q)$}. Then we have the following, where the case of $p=4,\ q=3$ has already been obtained in \cite[Theorem 2.3]{MN19} (see also (\ref{k73433})). 

\begin{Theorem}\label{lkrefine} 
Let $n\ge 6$ be an integer and $p,q\ge 3$ two integers satisfying $n=p+q$. For any spatial embedding $f$ of $K_n$, we have 
\begin{eqnarray}\label{ilKK0}
\sum_{\lambda\in\Gamma_{p,q}(K_n)}{\rm lk}(f(\lambda))^2
=
\left\{
   \begin{array}{@{\,}lll}
   (n-6)!{\displaystyle\sum_{\lambda\in\Gamma_{3,3}(K_n)}{\rm lk}(f(\lambda))^2} & (p=q) \\
   2\cdot (n-6)!{\displaystyle\sum_{\lambda\in\Gamma_{3,3}(K_n)}{\rm lk}(f(\lambda))^2} & (p\neq q). 
   \end{array}
\right. 
\end{eqnarray}
In particular, we also have 
\begin{eqnarray}\label{ilKK}
\sum_{p+q=n}
\sum_{\lambda\in\Gamma_{p,q}(K_n)}{\rm lk}(f(\lambda))^2
=
(n-5)!\sum_{\lambda\in\Gamma_{3,3}(K_n)}{\rm lk}(f(\lambda))^2.
\end{eqnarray}
\end{Theorem}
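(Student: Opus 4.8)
The plan is to reduce the squared-linking-number identity on $K_n$ to a counting problem relating pairs $(p,q)$-cycles in $K_n$ to disjoint triangle pairs, using the key observation that two disjoint cycles in $K_n$ always share the same linking number with a fixed ambient isotopy class only up to the standard invariance, so the essential content is combinatorial bookkeeping weighted by $\mathrm{lk}(f(\lambda))^2$. More precisely, I would first fix, for each disjoint triangle pair $\mu\in\Gamma_{3,3}(K_n)$, the contribution $\mathrm{lk}(f(\mu))^2$, and then count in how many ways $\mu$ ``sits inside'' a constituent link of type $(p,q)$ in a linking-number-preserving way. The point is that if $\lambda=(\gamma_1,\gamma_2)\in\Gamma_{p,q}(K_n)$ is a disjoint pair of Hamiltonian-sized cycles and $\mu=(\tau_1,\tau_2)$ is a disjoint triangle pair with $\tau_i\subset\gamma_i$ (so the triangles use vertices that are ``cyclically consecutive after collapsing''), then $\mathrm{lk}(f(\gamma_1),f(\gamma_2))$ need not equal $\mathrm{lk}(f(\tau_1),f(\tau_2))$; rather one must expand each $\mathrm{lk}(f(\gamma_i))$ via a ``graphing'' or resolution into a signed sum over sub-triangle pairs. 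This is exactly the mechanism already used to prove Theorem \ref{mainthm}, so I would invoke that machinery rather than rebuild it.

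The concrete first step is the standard ``cycle-as-sum-of-triangles'' identity: for a cycle $\gamma$ on vertex set $\{v_1,\dots,v_k\}$ in cyclic order and any cycle $\delta$ disjoint from it, one has $\mathrm{lk}(f(\gamma),f(\delta))=\sum \pm\,\mathrm{lk}(f(\tau),f(\delta))$, where $\tau$ runs over an appropriate family of triangles spanned by $\gamma$'s vertices. Applying this to both components of $\lambda\in\Gamma_{p,q}(K_n)$ expresses $\mathrm{lk}(f(\lambda))$ as a signed sum of $\mathrm{lk}(f(\mu))$ over certain triangle pairs $\mu$. Squaring and summing over all $\lambda\in\Gamma_{p,q}(K_n)$ then gives a double sum over ordered pairs $(\mu,\mu')$ of triangle pairs, with coefficients that are sums of products of signs over all $\lambda$ through which both $\mu$ and $\mu'$ factor. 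The crux is to show that all the cross terms $\mu\neq\mu'$ cancel and that the diagonal terms $\mu=\mu'$ each appear with the uniform coefficient $(n-6)!$ (when $p=q$, accounting for the symmetry that swaps the two like-sized components) or $2\,(n-6)!$ (when $p\neq q$). This is a purely combinatorial lemma about signed incidences between triangle pairs and $(p,q)$-cycle pairs in $K_n$, and it is where essentially all the work lies.

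For the cross-term cancellation I would argue as in \cite{MN19}: group the $\lambda$'s by a suitable ``switching'' involution (swapping a pair of consecutive vertices along one component, or reversing an arc) under which the sign of the $\mu$-coefficient flips while the $\mu'$-coefficient is unchanged whenever $\mu\neq\mu'$, forcing the net coefficient to vanish; the diagonal survives because there the two sign flips coincide and multiply to $+1$. The counting of the diagonal coefficient is then a matter of counting how many Hamiltonian-type $(p,q)$-splittings of $V(K_n)$ refine a given triangle-pair $\mu$: once the six vertices of $\mu$ and the cyclic slots they occupy are fixed, the remaining $n-6$ vertices are distributed and linearly ordered along the two cycles, and a short computation shows this can be done in exactly $(n-6)!$ ways (times $2$ when the two components are distinguishable, i.e.\ $p\neq q$). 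Finally, \eqref{ilKK} follows from \eqref{ilKK0} by summing over all ordered pairs $(p,q)$ with $p+q=n$ and $p,q\geq 3$: the $p=q$ term (possible only for even $n$) contributes $(n-6)!$ once, while each unordered pair $\{p,q\}$ with $p\neq q$ contributes $2\,(n-6)!$ from two ordered pairs, and since the number of such ordered pairs $(p,q)$ is $n-5$, the coefficients telescope to $(n-5)\cdot(n-6)!=(n-5)!$. I expect the main obstacle to be setting up the sign bookkeeping in the squared sum cleanly enough that the involution argument applies uniformly across all the cross terms; once that is in place, the diagonal count and the passage to \eqref{ilKK} are routine.
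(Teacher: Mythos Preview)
Your strategy---expand each $\mathrm{lk}(f(\lambda))$ as a signed sum of triangle-pair linking numbers, square, sum, and argue that off-diagonal terms cancel---is \emph{not} what the paper does, and the central step you rely on is actually false as stated. The paper never touches a bilinear expansion; it argues by induction on $n$, restricting $f$ to subgraphs $K_{n-1}^{(m)}$, $K_{n-2}^{(kl)}$ and the edge-subdivision subgraphs $F_{ij}^{(m)}$, $F_{ij}^{(kl)}$ (homeomorphic to $K_{n-1}$, $K_{n-2}$), and summing the resulting identities (Lemmas~\ref{L1},~\ref{L2}). The base case $n=8$ is handled separately by combining Theorem~\ref{mainthm} (the $a_2$ formula) with several such restriction-and-sum lemmas. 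No involution on signs appears anywhere, and \cite{MN19} does not contain one either, so your appeal to it is misplaced.

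More importantly, the cross terms do \emph{not} cancel as a combinatorial identity; the theorem holds because of the linear \emph{relations} among triangle-pair linking numbers, which you do not invoke. Already for $n=7$, $(p,q)=(3,4)$: fix a triangle $\gamma_1$ and let $P,Q,R,S$ be $\mathrm{lk}(f(\gamma_1),f(\tau))$ for the four triangles $\tau$ on the remaining vertices $\{a,b,c,d\}$. The three $4$-cycles contribute $(P+R)^2+(Q-R)^2+(Q-P)^2$, and a direct expansion gives $2P^2+2Q^2+2R^2+2PR-2PQ-2QR$, which is \emph{not} a diagonal form in $P,Q,R,S$. It equals $P^2+Q^2+R^2+S^2$ only after one substitutes the relation $S=P+R-Q$ coming from the two fan decompositions of $[abcd]$. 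So no sign-flipping involution can kill the cross terms; what is really needed is a systematic reduction modulo the ``$4$-cycle relations'' $\mathrm{lk}(\tau,[abc])+\mathrm{lk}(\tau,[acd])=\mathrm{lk}(\tau,[abd])+\mathrm{lk}(\tau,[bcd])$, and you have given no mechanism for carrying this out for general $(p,q)$. Until that is supplied, the proposal is a sketch of a plausible alternative route, not a proof.
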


Note that Theorem \ref{KK} (2) can be recovered by taking the modulo two reduction of (\ref{ilKK}), namely (\ref{ilKK}) gives an integral lift of Theorem \ref{KK} (2). Since $K_{n}$ contains $K_{6}$ if $n\ge 6$, by Theorem \ref{CG} (1) we have $\sum_{\lambda\in\Gamma_{3,3}(K_n)}{\rm lk}(f(\lambda))^2 > 0$. Therefore (\ref{ilKK0}) implies that every spatial graph of $K_{n}$ with $n\ge 6$ contains a constituent $2$-component Hamiltonian link of any type $(p,q)$ with nonzero linking number. In that sense, (\ref{ilKK0}) is also a refinement of Theorem \ref{KK} (1).

By Theorem \ref{lkrefine}, we obtain the following congruences as a corollary, which are remarkable generalizations of Theorem \ref{KK} (2).

\begin{Corollary}\label{maincor1} 
Let $n\ge 6$ be an integer and $p,q\ge 3$ two integers satisfying $n=p+q$. Let $f$ be a spatial embedding of $K_n$. 
\begin{enumerate}
\item If $p=q$, then we have the following congruence modulo $2\cdot (n-6)!$: 
\begin{eqnarray*}\label{congru1}
\sum_{\lambda\in\Gamma_{p,q}(K_n)}{\rm lk}(f(\lambda))^2
\equiv 
\left\{
   \begin{array}{@{\,}lll}
   (n-6)! & (n\equiv 6,7\pmod{8})\\
   0 & (n\not\equiv 6,7\pmod{8}). 
   \end{array}
\right. 
\end{eqnarray*}

\item If $p\neq q$, then we have the following congruence modulo $4\cdot (n-6)!$: 
\begin{eqnarray*}\label{congru2}
\sum_{\lambda\in\Gamma_{p,q}(K_n)}{\rm lk}(f(\lambda))^2
\equiv 
\left\{
   \begin{array}{@{\,}lll}
   2\cdot (n-6)! & (n\equiv 6,7\pmod{8})\\
   0 & (n\not\equiv 6,7\pmod{8}). 
   \end{array}
\right. 
\end{eqnarray*}

\item We have the following congruence modulo $2\cdot (n-5)!$: 
\begin{eqnarray*}\label{congru3}
\sum_{p+q=n}
\sum_{\lambda\in\Gamma_{p,q}(K_n)}{\rm lk}(f(\lambda))^2 
\equiv 
\left\{
   \begin{array}{@{\,}lll}
   (n-5)! & (n\equiv 6,7\pmod{8})\\
   0 & (n\not\equiv 6,7\pmod{8}). 
   \end{array}
\right. 
\end{eqnarray*}
\end{enumerate}
\end{Corollary}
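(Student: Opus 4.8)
The plan is to deduce all three congruences directly from Theorem \ref{lkrefine}, reducing everything to the parity of the integer $S:=\sum_{\lambda\in\Gamma_{3,3}(K_n)}{\rm lk}(f(\lambda))^2$. The first observation is the elementary congruence $am\equiv a\,(m\bmod 2)\pmod{2a}$, valid for all integers $a$ and $m$ (writing $m=2k+r$ with $r\in\{0,1\}$ gives $am=2ak+ar\equiv ar\pmod{2a}$). Applying it to the right-hand sides of (\ref{ilKK0}) and (\ref{ilKK}) yields
\begin{align*}
(n-6)!\,S &\equiv (n-6)!\,(S\bmod 2)\pmod{2\cdot(n-6)!},\\
2\cdot(n-6)!\,S &\equiv 2\cdot(n-6)!\,(S\bmod 2)\pmod{4\cdot(n-6)!},\\
(n-5)!\,S &\equiv (n-5)!\,(S\bmod 2)\pmod{2\cdot(n-5)!},
\end{align*}
so each of (1), (2) and (3) will follow once we show that $S$ is odd precisely when $n\equiv 6,7\pmod 8$.

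To compute $S$ modulo $2$, I would use $x^2\equiv x\pmod 2$ to write $S\equiv\sum_{\lambda\in\Gamma_{3,3}(K_n)}{\rm lk}(f(\lambda))\pmod 2$. Every $\lambda\in\Gamma_{3,3}(K_n)$ is a pair of vertex-disjoint triangles and hence spans exactly six vertices, so it lies in a unique induced complete subgraph on six vertices; conversely, for each $6$-element subset $W$ of $\{1,2,\ldots,n\}$ the set $\Gamma_{3,3}$ of the induced $K_6$ is exactly the collection of its constituent $2$-component Hamiltonian links. Partitioning $\Gamma_{3,3}(K_n)$ according to the set of six vertices spanned and applying Theorem \ref{CG} (1) to the restriction of $f$ to each of the $\binom{n}{6}$ induced copies of $K_6$, every inner sum of linking numbers is odd, whence $S\equiv\binom{n}{6}\pmod 2$.

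It then remains to determine the parity of $\binom{n}{6}$, which I would do via Lucas' theorem: since $6=(110)_2$, one has $\binom{n}{6}\equiv n_1 n_2\pmod 2$, where $n_1$ and $n_2$ denote the coefficients of $2^1$ and $2^2$ in the binary expansion of $n$; thus $\binom{n}{6}$ is odd if and only if $n_1=n_2=1$, that is, if and only if $n\equiv 6$ or $7\pmod 8$. Combining this with the three displayed congruences above proves (1), (2) and (3). There is no real obstacle in the argument; the only points demanding attention are the bookkeeping in the modular reductions by the factorial coefficients and the correct translation of the binary condition $n_1=n_2=1$ into the residue classes $6,7$ modulo $8$.
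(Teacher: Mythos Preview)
Your argument is correct and follows essentially the same strategy as the paper: both reduce the statement, via Theorem \ref{lkrefine}, to the parity of $S=\sum_{\lambda\in\Gamma_{3,3}(K_n)}{\rm lk}(f(\lambda))^2$, and both determine that parity by partitioning $\Gamma_{3,3}(K_n)$ over the $\binom{n}{6}$ induced copies of $K_6$ and invoking Theorem \ref{CG} (1).

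The one genuine difference is in how the modular reduction is carried out. The paper compares an arbitrary embedding $f$ with a second embedding $g$, observes that the difference of the two sums is $2\cdot(n-6)!$ times an integer, and then takes for $g$ a canonical book presentation (Remark \ref{ineq_sharp}) satisfying $\sum_{\lambda\in\Gamma_{3,3}(K_n)}{\rm lk}(g(\lambda))^2=\binom{n}{6}$; this identifies the residue class explicitly. Your route is more direct: the elementary congruence $am\equiv a\,(m\bmod 2)\pmod{2a}$ applied to the identities of Theorem \ref{lkrefine} yields the residue immediately from $S\bmod 2$, with no need to invoke a particular embedding. This makes your proof slightly more self-contained, since it does not rely on the existence result cited in Remark \ref{ineq_sharp}. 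Your use of Lucas' theorem to pin down the parity of $\binom{n}{6}$ is also a clean addition; the paper simply asserts the equivalence with $n\equiv 6,7\pmod 8$ without justification.
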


Moreover, by Theorem \ref{lkrefine}, we also obtain the following inequalities as a corollary, where the case of $p=q=3$ has already been obtained from Theorem \ref{CG} (1), and the case of $p=3$, $q=4$ has also been observed in \cite[Theorem 1]{FM09}.

\begin{Corollary}\label{Cor9}
Let $n\ge 6$ be an integer and $p,q\ge 3$ two integers satisfying $n=p+q$. For any spatial embedding $f$ of $K_n$, we have 
\begin{eqnarray}\label{ieMN0}
\sum_{\lambda\in\Gamma_{p,q}(K_n)}{\rm lk}(f(\lambda))^2
\ge
\left\{
   \begin{array}{@{\,}lll}
   {\displaystyle \frac{n!}{6!}} & (p=q) \\
   {\displaystyle 2\cdot \frac{n!}{6!}} & (p\neq q). 
   \end{array}
\right. 
\end{eqnarray}

In particular, we have 
\begin{eqnarray}\label{ieMN}
\sum_{p+q=n}
\sum_{\lambda\in\Gamma_{p,q}(K_n)}{\rm lk}(f(\lambda))^2 
\ge
(n-5)\cdot \frac{n!}{6!}.
\end{eqnarray}
\end{Corollary}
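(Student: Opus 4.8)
The final statement to prove is Corollary \ref{Cor9}, which gives lower bounds on the sum of squares of linking numbers over constituent $2$-component Hamiltonian links of type $(p,q)$.

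\textbf{Proof proposal for Corollary \ref{Cor9}.}

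The plan is to combine Theorem \ref{lkrefine} with a lower bound on the ``seed'' quantity $\sum_{\lambda\in\Gamma_{3,3}(K_n)}{\rm lk}(f(\lambda))^2$. The content of Theorem \ref{lkrefine} reduces everything to understanding the type-$(3,3)$ sum, so the only real work is to show that
\begin{eqnarray*}
\sum_{\lambda\in\Gamma_{3,3}(K_n)}{\rm lk}(f(\lambda))^2 \ge \binom{n}{6}
\end{eqnarray*}
for every spatial embedding $f$ of $K_n$. First I would observe that any choice of $6$ of the $n$ vertices spans a subgraph isomorphic to $K_6$, and restricting $f$ to this $K_6$ gives a spatial embedding of $K_6$; by the Conway-Gordon theorem (Theorem \ref{CG} (1)), the sum of linking numbers over the ten constituent $(3,3)$-links of that $K_6$ is odd, hence at least one of those ten links has odd (in particular nonzero) linking number, so its square is at least $1$. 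Thus each of the $\binom{n}{6}$ copies of $K_6$ contributes at least $1$ to $\sum_{\lambda\in\Gamma_{3,3}(K_n)}{\rm lk}(f(\lambda))^2$. The subtlety is that a single pair $\lambda\in\Gamma_{3,3}(K_n)$ of disjoint triangles uses exactly $6$ vertices and therefore lies in exactly one $K_6$-subgraph; so the contributions coming from distinct $K_6$'s are carried by disjoint sets of $\lambda$'s, and the count $\binom{n}{6}$ is not over-counted. This gives the displayed inequality since $\binom{n}{6} = n!/(6!\,(n-6)!)$ and — plugging back into (\ref{ilKK0}) — $\sum_{\lambda\in\Gamma_{p,q}(K_n)}{\rm lk}(f(\lambda))^2$ is $(n-6)!$ (or $2\cdot(n-6)!$) times this, yielding exactly $n!/6!$ (or $2\cdot n!/6!$), which is (\ref{ieMN0}).

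For the ``in particular'' inequality (\ref{ieMN}), I would simply sum (\ref{ieMN0}) over all ordered-then-unordered decompositions $p+q=n$ with $p,q\ge 3$, or equivalently invoke (\ref{ilKK}) directly: there are $n-5$ choices of unordered pair $\{p,q\}$ with $p+q=n$ and $p,q\ge 3$ (namely $\{3,n-3\},\{4,n-4\},\ldots$), and in (\ref{ilKK}) the sum $\sum_{p+q=n}$ is over these with the convention matching (\ref{ilKK0}); since each term is at least $n!/6!$ by (\ref{ieMN0}), the total is at least $(n-5)\cdot n!/6!$. Alternatively, and more cleanly, apply (\ref{ilKK}) together with the bound $\sum_{\lambda\in\Gamma_{3,3}(K_n)}{\rm lk}(f(\lambda))^2 \ge \binom{n}{6}$ to get $(n-5)!\binom{n}{6} = (n-5)\cdot (n-6)!\binom{n}{6} = (n-5)\cdot n!/6!$.

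The main obstacle — really the only nonroutine point — is the counting/disjointness argument: one must be careful to assign to each $K_6$-subgraph a link $\lambda$ with nonzero linking number \emph{without} two different $K_6$'s being forced to use the same $\lambda$. Since a type-$(3,3)$ link occupies all $6$ vertices of a unique $6$-vertex subset, this is automatic, but it is worth stating explicitly. Everything else is a direct substitution into the already-established identities of Theorem \ref{lkrefine}. $\square$
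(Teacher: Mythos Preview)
Your proof is correct and follows exactly the paper's approach: establish $\sum_{\lambda\in\Gamma_{3,3}(K_n)}{\rm lk}(f(\lambda))^2 \ge \binom{n}{6}$ via Conway--Gordon on each $K_6$-subgraph (noting each triangle pair sits in a unique $K_6$), then multiply through by the constants in Theorem \ref{lkrefine}. One small slip: your first argument for (\ref{ieMN}) miscounts the unordered pairs $\{p,q\}$ with $p+q=n$ (there are $\lfloor (n-4)/2\rfloor$, not $n-5$), so ``each term $\ge n!/6!$'' would not give the claimed total; but your alternative via (\ref{ilKK}) is clean and is precisely what the paper does.
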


The lower bound of each of the inequalities in Corollary \ref{Cor9} is sharp, see Remark \ref{ineq_sharp}. We also remark here that the minimum number of nonsplittable constituent $2$-component links of a spatial graph of $K_{n}$ with $n\le 9$ has been investigated by Fleming-Mellor \cite{FM09} and Abrams-Mellor-Trott \cite{AMT13}. It can be said that (\ref{ieMN}) gives an algebraic estimation from below of the number of nonsplittable constituent $2$-component Hamiltonian links of a spatial graph of $K_{n}$ with $n\ge 6$ (see also Remark \ref{minnumlk8}).

In addition, Corollary \ref{Cor9} also gives an information of the maximum value of the absolute value of the linking numbers over all of the $2$-component Hamiltonian links of type $(p,q)$ of a spatial graph of $K_{n}$ with $n\ge 6$.

\begin{Corollary}\label{maxlkcor}
Let $n\ge 6$ be an integer and $p,q\ge 3$ two integers satisfying $n=p+q$. For any spatial embedding $f$ of $K_n$, we have 
\begin{eqnarray*}\label{maxlk}
\mathop{\rm max}\limits_{\lambda\in \Gamma_{p,q}(K_{n})}
\left|{\rm lk}(f(\lambda))\right|
\ge
\frac{\sqrt{pq}}{3\sqrt{10}}. 
\end{eqnarray*}
\end{Corollary}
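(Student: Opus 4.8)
The plan is to deduce this from the quadratic lower bound of Corollary \ref{Cor9} by an averaging argument, once the number of cycle pairs of type $(p,q)$ has been counted exactly. The elementary observation is that the maximum of a finite family of nonnegative real numbers is at least their average; applied to the numbers $\mathrm{lk}(f(\lambda))^2$ for $\lambda\in\Gamma_{p,q}(K_n)$, and using that $\max_\lambda|\mathrm{lk}(f(\lambda))|^2=\bigl(\max_\lambda|\mathrm{lk}(f(\lambda))|\bigr)^2$, this gives
\[
\Bigl(\max_{\lambda\in\Gamma_{p,q}(K_n)}|\mathrm{lk}(f(\lambda))|\Bigr)^2
\ \ge\ \frac{1}{|\Gamma_{p,q}(K_n)|}\sum_{\lambda\in\Gamma_{p,q}(K_n)}\mathrm{lk}(f(\lambda))^2 .
\]

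First I would compute $|\Gamma_{p,q}(K_n)|$. A cycle through a prescribed set of $m\ge 3$ labelled vertices can be realised in exactly $(m-1)!/2$ ways, so a pair consisting of a $p$-cycle and a disjoint $q$-cycle spanning all $n=p+q$ vertices is obtained by splitting the vertex set into a block of size $p$ and a block of size $q$ and then cyclically ordering each block. When $p\ne q$ the two blocks are distinguishable, giving $\binom{n}{p}\cdot\frac{(p-1)!}{2}\cdot\frac{(q-1)!}{2}=\frac{n!}{4pq}$; when $p=q$ one must divide by an extra factor $2$, giving $\frac12\binom{n}{p}\bigl(\frac{(p-1)!}{2}\bigr)^2=\frac{n!}{8pq}$.

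Next I would substitute the bounds of Corollary \ref{Cor9}. When $p=q$ the averaging inequality gives $\bigl(\max_\lambda|\mathrm{lk}(f(\lambda))|\bigr)^2\ge (n!/6!)\bigl/(n!/(8pq))=8pq/6!$, and when $p\ne q$ it gives $\ge (2\cdot n!/6!)\bigl/(n!/(4pq))=8pq/6!$; the factor $2$ separating the two cases of Corollary \ref{Cor9} is precisely cancelled by the factor $2$ separating the two cases of $|\Gamma_{p,q}(K_n)|$, so in all cases the right-hand side equals $8pq/720=pq/90$. Taking square roots and noting $\sqrt{90}=3\sqrt{10}$ yields the assertion.

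There is no real obstacle here: the argument is entirely elementary once Corollary \ref{Cor9} is available, and the only points demanding attention are the symmetry factor in the count of $|\Gamma_{p,q}(K_n)|$ when $p=q$ and the fortunate coincidence that the two cases of the count and of the bound cancel, which is exactly what makes the clean uniform estimate $\sqrt{pq}/(3\sqrt{10})$ possible.
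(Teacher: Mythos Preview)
Your argument is correct and essentially identical to the paper's own proof: both count $|\Gamma_{p,q}(K_n)|$ exactly (distinguishing $p=q$ from $p\neq q$), bound the sum of squared linking numbers above by $|\Gamma_{p,q}(K_n)|\cdot(\max|\mathrm{lk}|)^2$ and below by Corollary~\ref{Cor9}, and observe that the two case distinctions cancel to give $(\max|\mathrm{lk}|)^2\ge pq/90$.
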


Corollary \ref{maxlkcor} says that if $n$ is sufficiently large then every spatial graph of $K_{n}$ contains a $2$-component Hamiltonian link of type $(p,q)$ whose absolute value of the linking number is arbitrary large. Actually we have the following. 

\begin{Corollary}\label{lkmaxham}
Let $p,q\ge 3$ be two integers. For a spatial embedding $f$ of $K_{p+q}$ and a positive integer $m$, if $pq > 90(m-1)^{2}$ then there exists a pair of disjoint cycles $\lambda\in \Gamma_{p,q}(K_{p+q})$ such that $|{\rm lk}(f(\lambda))|\ge m$. 
\end{Corollary}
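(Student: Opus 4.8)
The plan is to derive Corollary~\ref{lkmaxham} directly from Corollary~\ref{maxlkcor} by a purely arithmetic argument, so essentially no new geometric input is required. First I would invoke Corollary~\ref{maxlkcor} with $n=p+q$: for any spatial embedding $f$ of $K_{p+q}$ there exists $\lambda\in\Gamma_{p,q}(K_{p+q})$ with $|{\rm lk}(f(\lambda))|\ge \sqrt{pq}/(3\sqrt{10})$. The point is then to observe that if $pq>90(m-1)^2$, then
\begin{eqnarray*}
\frac{\sqrt{pq}}{3\sqrt{10}}
=\frac{\sqrt{pq}}{\sqrt{90}}
>\frac{\sqrt{90(m-1)^2}}{\sqrt{90}}
=m-1,
\end{eqnarray*}
so $|{\rm lk}(f(\lambda))|>m-1$. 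Since linking numbers are integers, $|{\rm lk}(f(\lambda))|$ is an integer strictly greater than $m-1$, hence $|{\rm lk}(f(\lambda))|\ge m$, which is exactly the claimed conclusion.

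Writing this out, the chain of implications is: $pq>90(m-1)^2 \Rightarrow \sqrt{pq}>\sqrt{90}\,(m-1) \Rightarrow \sqrt{pq}/(3\sqrt{10})>m-1$ (using $3\sqrt{10}=\sqrt{90}$), and then combine with the lower bound from Corollary~\ref{maxlkcor} and integrality of ${\rm lk}$ to upgrade the strict inequality $>m-1$ to $\ge m$. I would present the computation $3\sqrt{10}=\sqrt{9\cdot 10}=\sqrt{90}$ explicitly so the normalization of the constant is transparent, and note that $m-1\ge 0$ is not even needed since the argument works for any positive integer $m$ (when $m=1$ the statement is trivial anyway, as $|{\rm lk}|\ge 1$ follows already from $\sum_{\lambda\in\Gamma_{3,3}}{\rm lk}(f(\lambda))^2>0$ via Corollary~\ref{Cor9}, but the uniform argument handles it too).

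There is essentially no obstacle here; the only thing to be careful about is the direction of the inequality and the fact that we need a \emph{strict} inequality $>m-1$ (not $\ge m-1$) in order to conclude $\ge m$ for an integer. That is why the hypothesis is the strict inequality $pq>90(m-1)^2$ rather than $pq\ge 90(m-1)^2$: the strictness propagates through the square root and the division by the positive constant $3\sqrt{10}$, landing us with $|{\rm lk}(f(\lambda))|>m-1$, and an integer exceeding $m-1$ is at least $m$. So the proof is three lines: cite Corollary~\ref{maxlkcor}, do the arithmetic with the constant, and invoke integrality.
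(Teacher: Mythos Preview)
Your proposal is correct and follows essentially the same route as the paper: the paper's proof simply invokes Corollary~\ref{maxlkcor}, writes $\max_{\lambda\in\Gamma_{p,q}(K_n)}|{\rm lk}(f(\lambda))|\ge \sqrt{pq}/(3\sqrt{10})>m-1$, and says ``this implies the desired conclusion.'' Your version is slightly more explicit in spelling out $3\sqrt{10}=\sqrt{90}$ and in invoking integrality of the linking number to pass from $>m-1$ to $\ge m$, but the argument is the same.
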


It has already been known that for any positive integer $m$, there exists a positive integer $n$ such that every spatial graph of $K_{n}$ contains a $2$-component link $L$ with $|{\rm lk}(L)|\ge m$, see Flapan \cite{F02}, Shirai-Taniyama \cite{ST03}. In particular, Shirai-Taniyama showed in \cite{ST03} that for any spatial embedding $f$ of $K_{6\cdot 2^{k}}$, there exists a pair of disjoint cycles $\lambda\in \Gamma_{3\cdot 2^{k},3\cdot 2^{k}}(K_{6\cdot 2^{k}})$ such that $|{\rm lk}(f(\lambda))|\ge 2^{k}$. Since for every positive integer $m$, there exists an integer $k$ such that $2^{k-1}<m\le 2^{k}$, they also showed that $K_{12m}\supset K_{6\cdot 2^{k}}$ and therefore for any spatial embedding $f$ of $K_{12m}$, there exists a pair of disjoint cycles $\lambda\in \Gamma_{3\cdot 2^{k},3\cdot 2^{k}}(K_{12m})$ such that $|{\rm lk}(f(\lambda))|\ge 2^{k}\ge m$. The new knowledge obtained in Corollary \ref{lkmaxham} is the fact that for any positive integer $m$, there exists a positive integer $n$ such that every spatial graph of $K_{n}$ contains a $2$-component Hamiltonian link $L$ of any type $(p,q)$ with $|{\rm lk}(L)|\ge m$. 

The paper is organized as follows. In Section $2$, we prove Theorem \ref{lkrefine} in the case of $n=8$. In Section $3$, we prove Theorem \ref{lkrefine} in general case and Corollaries \ref{maincor1}, \ref{Cor9}, \ref{maxlkcor} and \ref{lkmaxham}. In Section $4$, we also mention some applications of our results to rectilinear spatial graphs, which are objects appearing in polymer chemistry as a mathematical model for chemical compounds.

\section{Proof of Theorem \ref{lkrefine}: $n=8$}

First we show Theorem \ref{lkrefine} in the case of $n=8$. Namely we prove the following.

\begin{Theorem}\label{lkthm}
For any spatial embedding $f$ of $K_8$, we have 
\begin{eqnarray}
&& \sum_{\lambda\in\Gamma_{4,4}(K_8)}{\rm lk}(f(\lambda))^2
= 2\sum_{\lambda\in\Gamma_{3,3}(K_8)}{\rm lk}(f(\lambda))^2,\label{4433}\\
&& \sum_{\lambda\in\Gamma_{3,5}(K_8)}{\rm lk}(f(\lambda))^2
= 4\sum_{\lambda\in\Gamma_{3,3}(K_8)}{\rm lk}(f(\lambda))^2.\label{3533}
\end{eqnarray}
\end{Theorem}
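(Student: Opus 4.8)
\textit{Strategy.} The plan is to express both sides of (\ref{4433}) and (\ref{3533}) entirely in terms of the numbers $L_{A,B}:={\rm lk}(f(\overline{A}),f(\overline{B}))$ for disjoint $3$-element vertex sets $A,B$, using that $\sum_{\lambda\in\Gamma_{3,3}(K_8)}{\rm lk}(f(\lambda))^{2}=\sum_{\{A,B\}}L_{A,B}^{2}$ over unordered pairs of disjoint triples. The basic device is this: if $S$ is a set of vertices of $K_8$, $K_S$ the complete subgraph on $S$, and $J$ a cycle of $K_8$ whose vertex set is disjoint from $S$, then $f(K_S)\subset S^{3}\setminus f(J)$, so ${\rm lk}(f(\gamma),f(J))$ depends on a cycle $\gamma$ of $K_S$ only through its homology class in $H_{1}(K_S)$; triangulating $\gamma$ by a fan from one of its vertices writes that class as a $\pm1$-signed sum of classes of triangles of $K_S$. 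I package the consequences I need as a \emph{master identity}: for $\abs S=m\in\{4,5\}$ and $J$ as above,
\[
\sum_{\gamma\in\Gamma_{m}(K_S)}{\rm lk}(f(\gamma),f(J))^{2}
= c_{m}\sum_{\Delta\in\Gamma_{3}(K_S)}{\rm lk}(f(\Delta),f(J))^{2},
\qquad c_{4}=1,\quad c_{5}=2 .
\]

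\textit{The master identity.} Since ${\rm lk}(f(\,\cdot\,),f(J))$ is the composite of $\gamma\mapsto[\gamma]$ with a fixed homomorphism $H_{1}(K_S)\to{\mathbb Z}$, the identity reduces to the purely (co)homological statement $\sum_{\gamma\in\Gamma_{m}(K_m)}\varphi([\gamma])^{2}=c_{m}\sum_{\Delta}\varphi([\Delta])^{2}$ for every $\varphi\in H^{1}(K_m;{\mathbb Z})$ (both sides are $S_m$-invariant quadratic forms on the irreducible $S_m$-representation $H_{1}(K_m;{\mathbb R})$, so such a constant exists; for $m\le 5$ it is fastest to compute it by hand). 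For $m=4$, the three Hamiltonian $4$-cycles of $K_4$ have classes $[123]+[134]$, $-[123]+[124]$ and $[124]-[134]$, while the fourth triangle satisfies $[234]=[123]+[134]-[124]$; expanding both sides in $\varphi([123]),\varphi([124]),\varphi([134])$ shows they agree, so $c_4=1$. For $m=5$, inserting the vertex $5$ into an edge $e$ of a Hamiltonian $4$-cycle $C'$ of $K_{\{1,2,3,4\}}$ gives a bijection $\Gamma_5(K_5)\leftrightarrow\{(C',e):e\in E(C')\}$ under which (suitably oriented) $[C]=[C']+[\Delta_e]$, where $\Delta_e$ is the triangle on $e\cup\{5\}$; since $\sum_{e\in E(C')}[\Delta_e]=-[C']$ (the spokes at $5$ cancel around $C'$) and each edge of $K_{\{1,2,3,4\}}$ lies in exactly two of its three Hamiltonian $4$-cycles, summing $\varphi([C])^{2}=\varphi([C'])^{2}+2\varphi([C'])\varphi([\Delta_e])+\varphi([\Delta_e])^{2}$ over all $C$ collapses, via $c_4=1$, to $2\sum_{\Delta\not\ni5}\varphi([\Delta])^{2}+2\sum_{\Delta\ni5}\varphi([\Delta])^{2}$, so $c_5=2$. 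This collapse is the step I expect to demand the most care with signs and multiplicities.

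\textit{Deduction of (\ref{4433}) and (\ref{3533}).} For (\ref{4433}), fix a partition $\{S,S'\}$ of the eight vertices into $4$-sets. Applying the master identity with $m=4$ first to each Hamiltonian $4$-cycle $C'$ of $K_{S'}$ (taking $J=C'$, summing over the Hamiltonian $4$-cycles $C$ of $K_S$) and then to each triangle $\overline{B}$ with $B\subset S$ (taking $J=\overline{B}$, summing over $C'$) yields $\sum_{C,C'}{\rm lk}(f(C),f(C'))^{2}=\sum_{B\subset S,\,\abs B=3}\ \sum_{B'\subset S',\,\abs{B'}=3}L_{B,B'}^{2}$. Summing over the $35$ partitions, each $L_{B_0,B_0'}^{2}$ occurs exactly twice — once for each of the two partitions separating $B_0$ from $B_0'$ — so $\sum_{\Gamma_{4,4}(K_8)}{\rm lk}(f(\lambda))^{2}=2\sum_{\{B,B'\}}L_{B,B'}^{2}=2\sum_{\Gamma_{3,3}(K_8)}{\rm lk}(f(\lambda))^{2}$, which is (\ref{4433}). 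For (\ref{3533}), fix a $3$-set $T$ and apply the master identity with $m=5$ to $K_{\overline T}$ and $J=\overline{T}$, getting $\sum_{C\in\Gamma_5(K_{\overline T})}{\rm lk}(f(\overline T),f(C))^{2}=2\sum_{B\subset\overline T,\,\abs B=3}L_{T,B}^{2}$; summing over the $56$ triples $T$ gives $\sum_{\Gamma_{3,5}(K_8)}{\rm lk}(f(\lambda))^{2}=2\sum_{(T,B)}L_{T,B}^{2}$ over \emph{ordered} pairs of disjoint triples, hence $=4\sum_{\{T,B\}}L_{T,B}^{2}=4\sum_{\Gamma_{3,3}(K_8)}{\rm lk}(f(\lambda))^{2}$ by symmetry of the linking number. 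As a check on the bookkeeping, the multiplicities give $35\cdot4\cdot4=560=2\abs{\Gamma_{3,3}(K_8)}$ and $56\cdot10=560=2\abs{\Gamma_{3,3}(K_8)}$, since $\abs{\Gamma_{3,3}(K_8)}=280$.
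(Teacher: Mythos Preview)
Your proof is correct, and it takes a genuinely different route from the paper's.

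The paper's proof of Theorem \ref{lkthm} goes through the $a_{2}$-machinery: it invokes the generalized Conway--Gordon identity (Theorem \ref{mainthm}) for $n=8$ and for the various $K_{7}$- and $K_{6}$-subgraphs (Lemmas \ref{K8-48} and \ref{413}), obtaining two linear relations among $\sum_{\Gamma_8}a_2$, $\sum_{\Gamma_7}a_2$, $\sum_{\Gamma_5}a_2$, $\sum_{\Gamma_{3,3}}{\rm lk}^2$ and $\sum_{\Gamma_{3,5}}{\rm lk}^2$; eliminating the $a_{2}$-sums yields (\ref{3533}), and then Lemma \ref{413b} (itself derived from the $K_7$ case (\ref{k73433}) via the subdivision lemma) gives (\ref{4433}). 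In other words, the paper treats (\ref{4433}) and (\ref{3533}) as by-products of the knot-invariant identity.

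Your argument bypasses $a_{2}$ entirely: it uses only that ${\rm lk}(f(\cdot),f(J))$ factors through $H_1(K_S)$, establishes the purely homological ``master identity'' $\sum_{\gamma\in\Gamma_m(K_m)}\varphi([\gamma])^2=c_m\sum_{\Delta}\varphi([\Delta])^2$ with $c_4=1$, $c_5=2$, and then does a double-counting over partitions. This is more elementary and self-contained (no Conway polynomial, no Theorem \ref{mainthm}), and the same template immediately gives (\ref{k73433}) and Lemma \ref{kn3433} as well. What the paper's approach buys is unity with its main theme---the same subdivision technique drives both the $a_2$ results and the ${\rm lk}^2$ results---whereas your approach makes transparent \emph{why} the coefficients in (\ref{4433}) and (\ref{3533}) are exactly $2$ and $4$: they are the products $c_4\cdot c_4\cdot(\text{multiplicity }2)$ and $c_5\cdot(\text{multiplicity }2)$ coming from the homological identity and the number of Hamiltonian partitions separating a given triangle pair. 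One small remark: the irreducibility of $H_1(K_m;\mathbb{R})$ as an $S_m$-module is true but not needed, since you verify $c_4,c_5$ by hand anyway.
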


We show four lemmas which are needed to prove Theorem \ref{lkthm}. In the following, we introduce some subgraphs of $K_{n}$ with $n\ge 4$ which are used in these proofs (and $\S 3$). We denote the edge of $K_{n}$ connecting two distinct vertices $i$ and $j$ by $\overline{ij}$, and denote a path of length $2$ of $K_{n}$ consisting of two edges $\overline{ij}$ and $\overline{jk}$ by $\overline{ijk}$. We denote the subgraph of $K_{n}$ obtained from $K_{n}$ by deleting the vertex $m$ and all of the edges incident to $m$ by $K_{n-1}^{(m)}\ (m = 1,2,\ldots,n)$. Actually $K_{n-1}^{(m)}$ is isomorphic to $K_{n-1}$ for any $m$. For $1\le i<j\le n$ and $i,j\neq m$, let $F_{ij}^{(m)}$ be the subgraph of $K_{n}$ obtained from $K_{n}$ by deleting the edges $\overline{ij}$ and $\overline{mk}$ for all $k$ with $1\le k\le n,\ k\neq i,j$. Note that $F_{ij}^{(m)}$ is homeomorphic to $K_{n-1}$, namely $F_{ij}^{(m)}$ is obtained from $K_{n-1}^{(m)}$ by subdividing the edge $\overline{ij}$ by the vertex $m$, see Fig. \ref{Knsubdivide}.

\begin{figure}[htbp]
      \begin{center}
      \scalebox{0.425}{\includegraphics*{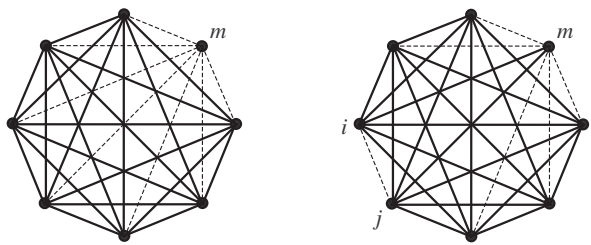}}
      \end{center}
   \caption{$K_{n-1}^{(m)}$ and $F_{ij}^{(m)}$ ($n = 8$)}
  \label{Knsubdivide}
\end{figure} 

Moreover, we denote a path of length $3$ that is not a $3$-cycle of $K_n$ consisting of three mutually distinct edges $\overline{ik}$, $\overline{kl}$ and $\overline{lj}$ by $\overline{iklj}$. 
For two distinct vertices $k$ and $l$ of $K_{n}$, 
we denote the subgraph of $K_n$ obtained from $K_n$ by deleting the vertices $k,l$ and all of the edges incident to $k,l$ by $K_{n-2}^{(kl)}\ (=K_{n-2}^{(lk)})$. Actually $K_{n-2}^{(kl)}$ is isomorphic to $K_{n-2}$ for any $k,l$. 
For two distinct vertices $i,j$ of $K_{n}$ with $i, j\neq k, l$, let $F_{ij}^{(kl)}$ be the subgraph of $K_n$ obtained from $K_n$ by deleting the edges $\overline{ij}$, $\overline{km}\ (1\leq m\leq n,\ m\neq i, l)$ and $\overline{lm'}\ (1\leq m'\leq n,\ m'\neq j, k)$, and 
$F_{ij}^{(lk)}$ the subgraph of $K_n$ obtained from $K_n$ by deleting the edges $\overline{ij}$, $\overline{km}\ (1\leq m\leq n,\ m\neq j, l)$ and $\overline{lm'}\ (1\leq m'\leq n,\ m'\neq i, k)$.  
Note that both $F_{ij}^{(kl)}$ and $F_{ij}^{(lk)}$ are obtained from $K_{n-2}^{(kl)}$ by subdividing the edge $\overline{ij}$ by the vertices $k,l$, see Fig \ref{Knsubdivide2}.

\begin{figure}[htbp]
      \begin{center}
            \scalebox{0.6}{\includegraphics*{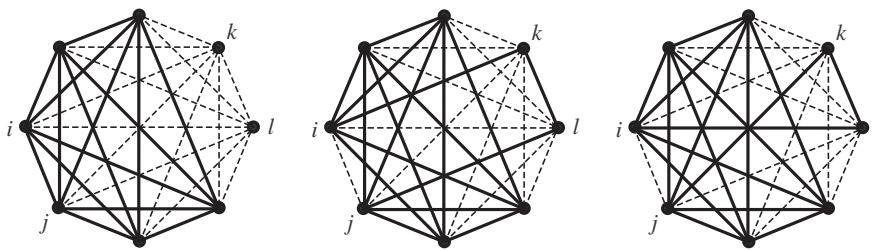}}
      \end{center}
   \caption{$K_{n-2}^{(kl)}$, $F_{ij}^{(kl)}$ and $F_{ij}^{(lk)}$ ($n = 8$)}
  \label{Knsubdivide2}
\end{figure} 

\begin{Lemma}\label{K8-48}
For any spatial embedding $f$ of $K_8$, we have 
\begin{eqnarray*}
\sum_{\gamma\in\Gamma_7(K_8)}a_2(f(\gamma))-6\sum_{\gamma\in\Gamma_5(K_8)}a_2(f(\gamma))
=2\sum_{\lambda\in\Gamma_{3,3}(K_8)}{\rm lk}(f(\lambda))^2-48.
\end{eqnarray*}
\end{Lemma}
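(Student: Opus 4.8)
The plan is to derive Lemma \ref{K8-48} from the $n=7$ case of Theorem \ref{mainthm} by applying that identity to each of the eight vertex-deleted subgraphs $K_{7}^{(m)}\cong K_{7}$ ($m=1,2,\ldots,8$) of $K_{8}$ and summing the results, while keeping track of the multiplicity with which each relevant cycle or pair of disjoint cycles of $K_{8}$ is counted.

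First I would observe that for each $m$ the restriction $f|_{K_{7}^{(m)}}$ is a spatial embedding of a graph isomorphic to $K_{7}$, and that a $7$-cycle of $K_{7}^{(m)}$ is precisely a Hamiltonian cycle of that $K_{7}$. Since $(7-5)!=2$ and $\binom{6}{5}=6$, Theorem \ref{mainthm} with $n=7$ yields
\[
\sum_{\gamma\in\Gamma_{7}(K_{7}^{(m)})}a_{2}(f(\gamma))-2\sum_{\gamma\in\Gamma_{5}(K_{7}^{(m)})}a_{2}(f(\gamma))
=\sum_{\lambda\in\Gamma_{3,3}(K_{7}^{(m)})}{\rm lk}(f(\lambda))^{2}-6
\]
for every $m$. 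Summing this identity over $m=1,\ldots,8$ produces the desired formula once the resulting coefficients are identified.

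The identification is a short counting argument. A $7$-cycle of $K_{8}$ omits exactly one vertex, hence lies in $\Gamma_{7}(K_{7}^{(m)})$ for a single value of $m$, so the first sums assemble to $\sum_{\gamma\in\Gamma_{7}(K_{8})}a_{2}(f(\gamma))$; a $5$-cycle of $K_{8}$ omits three vertices, hence lies in $\Gamma_{5}(K_{7}^{(m)})$ for exactly three values of $m$, producing the coefficient $2\cdot 3=6$; and a pair $\lambda\in\Gamma_{3,3}(K_{8})$ uses six vertices, hence lies in $\Gamma_{3,3}(K_{7}^{(m)})$ for exactly the two omitted values of $m$, producing the coefficient $2$. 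Finally the constant terms contribute $8\cdot 6=48$. Hence the summed identity reads
\[
\sum_{\gamma\in\Gamma_{7}(K_{8})}a_{2}(f(\gamma))-6\sum_{\gamma\in\Gamma_{5}(K_{8})}a_{2}(f(\gamma))
=2\sum_{\lambda\in\Gamma_{3,3}(K_{8})}{\rm lk}(f(\lambda))^{2}-48,
\]
which is exactly the assertion.

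There is no essential obstacle here: the whole argument is of the ``restrict to the eight copies of $K_{7}$, apply the known identity, and re-assemble'' type. The only point requiring care is the bookkeeping in the last step, namely verifying the multiplicities $1$, $3$, $2$ stated above; and one should keep in mind that these $7$-cycles, $5$-cycles and $(3,3)$-pairs of $K_{8}$ are \emph{not} Hamiltonian in $K_{8}$ itself, so the left-hand side of Lemma \ref{K8-48} genuinely differs from the Hamiltonian sum that Theorem \ref{mainthm} would give for $n=8$.
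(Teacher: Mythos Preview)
Your proposal is correct and follows essentially the same approach as the paper: apply Theorem \ref{mainthm} with $n=7$ to each $K_{7}^{(m)}$, sum over $m=1,\ldots,8$, and use the multiplicity counts (a $t$-cycle of $K_8$ lies in exactly $8-t$ of the $K_{7}^{(m)}$'s, a $(3,3)$-pair in exactly two). The paper's argument is identical in structure and detail.
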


\begin{proof}
Let $f$ be a spatial embedding of $K_8$. Then by applying Theorem \ref{mainthm} in the case of $n=7$ to the embedding $f$ restricted to $K_{7}^{(m)}\ (m=1,2,\ldots,8)$, we have 
\begin{eqnarray}\label{K8-480}
\sum_{\gamma\in\Gamma_7(K_{7}^{(m)})}a_2(f(\gamma))-2\sum_{\gamma\in\Gamma_5(K_{7}^{(m)})}a_2(f(\gamma))
=\sum_{\lambda\in\Gamma_{3,3}(K_{7}^{(m)})}{\rm lk}(f(\lambda))^2-6.
\end{eqnarray}
Let us take the sum of both sides of (\ref{K8-480}) for all $m$. 
Since for each $t$-cycle $\gamma$ of $K_8$ is shared by exactly $(8-t)$ $K_{7}^{(m)}$'s, we have 
\begin{eqnarray}\label{K8-481} 
\sum_{m=1}^{8}\sum_{\gamma\in\Gamma_{t}(K_{7}^{(m)})}a_2(f(\gamma))=(8-t) \sum_{\gamma\in\Gamma_{t}(K_8)}a_2(f(\gamma)).
\end{eqnarray}
On the other hand, we have 
\begin{eqnarray}\label{K8-483} 
\sum_{m=1}^{8}\sum_{\lambda\in\Gamma_{3,3}(K_{7}^{(m)})}{\rm lk}(f(\lambda))^{2}=2\sum_{\lambda\in\Gamma_{3,3}(K_8)}{\rm lk}(f(\lambda))^{2}.
\end{eqnarray}
By combining (\ref{K8-481}) and (\ref{K8-483}) with (\ref{K8-480}), we have the result. 
\end{proof}

\begin{Lemma}\label{413}
For any spatial embedding $f$ of $K_8$, we have 
\begin{eqnarray*}
&&16\sum_{\gamma\in\Gamma_8(K_8)}a_2(f(\gamma))-14\sum_{\gamma\in\Gamma_7(K_8)}a_2(f(\gamma))-12\sum_{\gamma\in\Gamma_5(K_8)}a_2(f(\gamma))\\
&=&5\sum_{\lambda\in\Gamma_{3,5}(K_8)}{\rm lk}(f(\lambda))^2-336.\nonumber
\end{eqnarray*}
\end{Lemma}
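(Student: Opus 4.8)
The plan is to mimic the proof of Lemma \ref{K8-48}, but this time applying the already-established type $(4,3)$ case of Theorem \ref{lkrefine} (equation (\ref{k73433}), the $n=7$ instance) to the restrictions of $f$ to the subdivided subgraphs $F_{ij}^{(m)}$ rather than to the vertex-deleted subgraphs $K_7^{(m)}$. Recall that $F_{ij}^{(m)}$ is homeomorphic to $K_7$, with the vertex $m$ sitting in the interior of the edge $\overline{ij}$; under this homeomorphism a Hamiltonian $7$-cycle of $F_{ij}^{(m)}$ corresponds to a Hamiltonian $6$-cycle of $K_7^{(m)}$ through the edge $\overline{ij}$ (after absorbing $m$), i.e. to an $8$-cycle or $7$-cycle of $K_8$ depending on whether the cycle actually uses a path through $m$, and similarly the $\Gamma_{4,3}$ pairs of $F_{ij}^{(m)}$ correspond to $\Gamma_{3,3}$ or $\Gamma_{3,4}$ or $\Gamma_{3,5}$ pairs of $K_8$. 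So first I would write down the $n=7$ identity
\begin{eqnarray*}
\sum_{\lambda\in\Gamma_{4,3}(F_{ij}^{(m)})}{\rm lk}(f(\lambda))^2
= 2\sum_{\lambda\in\Gamma_{3,3}(F_{ij}^{(m)})}{\rm lk}(f(\lambda))^2
\end{eqnarray*}
for each admissible triple $(i,j,m)$, and then sum over all such triples.

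The core of the argument is a careful combinatorial bookkeeping of how each cycle/link of $K_8$ is counted when one ranges over all $F_{ij}^{(m)}$. For the knot side, I would split $\Gamma_{4,3}(F_{ij}^{(m)})$ and $\Gamma_{3,3}(F_{ij}^{(m)})$ according to whether the relevant cycle passes through the subdivision vertex $m$ (hence through both subedges of $\overline{ij}$): a $4$-cycle of $F_{ij}^{(m)}$ through $m$ is really a $3$-cycle of $K_8$ using edge $\overline{ij}$, while one missing $m$ stays a genuine $4$-cycle of $K_8$ avoiding $\overline{ij}$ and avoiding vertex $m$; and the $3$-cycles behave analogously. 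Counting the multiplicities — how many pairs $(ij,m)$ realize a given $8$-cycle, $7$-cycle, $5$-cycle of $K_8$ as a Hamiltonian cycle of some $F_{ij}^{(m)}$, and how many realize a given $\Gamma_{3,5}$, $\Gamma_{3,4}$, $\Gamma_{3,3}$ pair — is the heart of the computation. Each of these multiplicities is a small explicit integer (it depends only on $n=8$ and on which edges/vertices the cycle uses), producing linear identities of the schematic form $\alpha\sum_{\Gamma_8}a_2 + \beta\sum_{\Gamma_7}a_2 + \gamma\sum_{\Gamma_5}a_2 + \cdots = \text{(coefficients)}\sum_{\Gamma_{3,5}}{\rm lk}^2 + \cdots$, which after using Lemma \ref{K8-48} and Theorem \ref{mainthm} (for $n=8$) to eliminate the lower-order terms collapses to the stated identity with the constant $-336$.

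The step I expect to be the main obstacle is precisely this multiplicity count: one has to be scrupulous about the degenerate cases where a $4$-cycle of $F_{ij}^{(m)}$ fails to be a genuine $4$-cycle of $K_8$ (because it runs through $m$) and about the constraint $i,j\neq m$, together with the asymmetry that only one of the two vertices $i,j$ is "adjacent" to the path through $m$ on each side. A clean way to organize this is to fix a target cycle (or pair) $C$ in $K_8$, ask for which $(i,j,m)$ the graph $F_{ij}^{(m)}$ contains $C$ in the required position, and observe that this forces $m\notin C$ and $\overline{ij}\notin C$ but $i,j\in C$ (or a subdivided version of it), which pins down the count. Once the four multiplicities on the knot side and the three on the link side are in hand, the constant $336 = 5\cdot\binom{7}{5}\cdot 8 / \text{(normalizing factor)}$ drops out of summing the constant $-6$ over all triples and re-expressing via the $n=8$ constants $48$ and $\binom{7}{5}=21$; I would verify the arithmetic by checking the identity against the standard (e.g. $D_4$-symmetric or minimally knotted) models of $K_8$ as a sanity check.
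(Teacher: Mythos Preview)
There is a genuine gap in the plan. The identity you propose to sum, equation~(\ref{k73433}), is
\[
\sum_{\lambda\in\Gamma_{3,4}(K_7)}{\rm lk}(g(\lambda))^2
= 2\sum_{\lambda\in\Gamma_{3,3}(K_7)}{\rm lk}(g(\lambda))^2,
\]
which is a \emph{pure linking-number identity}: it contains no $a_2$ terms and no constant term. Summing it over all $F_{ij}^{(m)}$ therefore produces an identity relating only the quantities $\sum_{\Gamma_{3,3}(K_8)}{\rm lk}^2$, $\sum_{\Gamma_{3,4}(K_8)}{\rm lk}^2$, $\sum_{\Gamma_{3,5}(K_8)}{\rm lk}^2$ and $\sum_{\Gamma_{4,4}(K_8)}{\rm lk}^2$. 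That computation is precisely Lemma~\ref{413b} (cf.\ Lemma~\ref{L1} with $n=8$, $t=3$, $s=1$), not Lemma~\ref{413}. The ``schematic form'' you write down, with $\sum_{\Gamma_8}a_2$, $\sum_{\Gamma_7}a_2$, $\sum_{\Gamma_5}a_2$ on the left, cannot arise from this starting point: there is nothing on either side of~(\ref{k73433}) that will turn into a knot invariant upon summing, and the constant $-336$ cannot drop out of summing an identity whose constant term is zero (the ``$-6$'' you mention belongs to Theorem~\ref{mainthm}, not to~(\ref{k73433})). Invoking Lemma~\ref{K8-48} and Theorem~\ref{mainthm} for $n=8$ afterwards does not help, because Lemma~\ref{413b} alone involves \emph{two} unknown link sums ($\Gamma_{3,5}$ and $\Gamma_{4,4}$) against one known one ($\Gamma_{3,3}$); a second, independent relation is needed, and that is exactly the role of Lemma~\ref{413}.

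The paper's proof uses a different input: it applies Theorem~\ref{mainthm} for $n=6$ (which \emph{does} carry $a_2$ terms and the constant $-1$) to the \emph{doubly} subdivided subgraphs $F_{ij}^{(kl)}$, homeomorphic to $K_6$. The point of the double subdivision is that a $(3,3)$ pair in the $K_6$ sense, when one $3$-cycle runs through the length-$3$ path $\overline{iklj}$, becomes a $(3,5)$ pair in $K_8$; this is the only mechanism available that couples Hamiltonian $a_2$ sums directly to $\sum_{\Gamma_{3,5}}{\rm lk}^2$. A single subdivision $F_{ij}^{(m)}$ can only lengthen a cycle by one, so even if you had applied Theorem~\ref{mainthm} for $n=7$ to $F_{ij}^{(m)}$ (which your text suggests you may have had in mind), the right-hand side would carry $\sum_{\Gamma_{3,4}}{\rm lk}^2$ rather than $\sum_{\Gamma_{3,5}}{\rm lk}^2$, and the left-hand side would pick up an unwanted $\sum_{\Gamma_6}a_2$ term that Lemma~\ref{K8-48} and the $n=8$ case of Theorem~\ref{mainthm} cannot eliminate.
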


\begin{proof}
Let $f$ be a spatial embedding of $K_8$. Then by applying Theorem \ref{mainthm} in the case of $n=6$ to the embedding $f$ restricted to $F_{ij}^{(kl)}$ $(1\le i<j\le 8,\ i,j\neq k,l)$, we have 
\begin{eqnarray}\label{f78}
&&2\Bigg(
\sum_{\substack{\gamma\in\Gamma_8(F_{ij}^{(kl)})\\ \overline{iklj}\subset\gamma}}a_2(f(\gamma))+\sum_{\substack{\gamma\in\Gamma_6(F_{ij}^{(kl)})\\ \overline{iklj}\not\subset\gamma}}a_2(f(\gamma)) \\
&& -\sum_{\substack{\gamma\in\Gamma_7(F_{ij}^{(kl)})\\ \overline{iklj}\subset\gamma}}a_2(f(\gamma))-\sum_{\substack{\gamma\in\Gamma_5(F_{ij}^{(kl)})\\ \overline{iklj}\not\subset\gamma}}a_2(f(\gamma))
\Bigg)\nonumber\\
&=&\sum_{\substack{\lambda=\gamma\cup\gamma'\in\Gamma_{3,5}(F_{ij}^{(kl)})\\ \gamma\in\Gamma_5(F_{ij}^{(kl)}),\ \gamma'\in\Gamma_3(F_{ij}^{(kl)})\\ \overline{iklj}\subset\gamma}}{\rm lk}(f(\lambda))^2+\sum_{\lambda\in\Gamma_{3,3}(F_{ij}^{(kl)})}{\rm lk}(f(\lambda))^2-1.\nonumber
\end{eqnarray}
Note that by applying Theorem \ref{mainthm} in the case of $n=6$ to the embedding $f$ restricted to $F_{ij}^{(lk)}$, we also have a similar formula as (\ref{f78}). 
Let us take the sum of both sides of (\ref{f78}) over $1\leq i<j\leq 8$ and $i,j\neq k,l$. 
For a $t(\ge 4)$-cycle $\gamma$ of $K_8$ containing $\overline{kl}$, let $i$ and $j$ be the two vertices of $K_8$ which are adjacent to $\overline{kl}$ in $\gamma\ (1\leq i<j\leq 8)$. Then $\gamma$ is a $t$-cycle of $F_{ij}^{(kl)}$ or $F_{ij}^{(lk)}$. This implies that
\begin{eqnarray}\label{f781}
\sum_{\substack{1\le i<j\le 8\\ i,j\neq k,l}}
\Bigg( 
\sum_{\substack{\gamma\in\Gamma_{t}(F_{ij}^{(kl)})\\ \overline{iklj}\subset\gamma}}\!\!\!\!\! a_2(f(\gamma))+\sum_{\substack{\gamma\in\Gamma_{t}(F_{ij}^{(lk)})\\ \overline{ilkj}\subset\gamma}}\!\!\!\!\! a_2(f(\gamma))
\Bigg)=
\sum_{\substack{\gamma\in\Gamma_{t}(K_8)\\ \overline{kl}\subset\gamma}}\!\!\!\!\! a_2(f(\gamma)).
\end{eqnarray}
For a $t$-cycle $\gamma$ of $K_6^{(kl)}$, let $\overline{ij}$ be an edge of $K_6^{(kl)}$ which is not contained in $\gamma$. Then $\gamma$ is a $t$-cycle of $F_{ij}^{(kl)}$ (resp. $F_{ij}^{(lk)}$) which does not contain $\overline{iklj}$ (resp. $\overline{ilkj}$). Note that there are $(15-t)$ ways to to choose such a pair of $i$ and $j$. This implies that
\begin{eqnarray}
&& \sum_{\substack{1\le i<j\le 8\\ i,j\neq k,l}}
\sum_{\substack{\gamma\in\Gamma_{t}(F_{ij}^{(kl)})\\ \overline{iklj}\not\subset\gamma}}a_2(f(\gamma))
= 
(15-t)\sum_{\gamma\in\Gamma_{t}(K_6^{(kl)})}a_2(f(\gamma)),\label{f782}\\
&& \sum_{\substack{1\le i<j\le 8\\ i,j\neq k,l}}
\sum_{\substack{\gamma\in\Gamma_{t}(F_{ij}^{(lk)})\\ \overline{ilkj}\not\subset\gamma}}a_2(f(\gamma))
= 
(15-t)\sum_{\gamma\in\Gamma_{t}(K_6^{(kl)})}a_2(f(\gamma)).\label{f783}
\end{eqnarray}
On the other hand, we have 
\begin{eqnarray}\label{f787}
&& \sum_{\substack{1\le i<j\le 8\\ i,j\neq k,l}}
\Bigg(
\sum_{\substack{\lambda=\gamma\cup\gamma'\in\Gamma_{3,5}(F_{ij}^{(kl)})\\ \gamma\in\Gamma_5(F_{ij}^{(kl)}),\ \gamma'\in\Gamma_3(F_{ij}^{(kl)})\\ \overline{iklj}\subset\gamma}}\!\!\!\!\! {\rm lk}(f(\lambda))^2
+\sum_{\substack{\lambda=\gamma\cup\gamma'\in\Gamma_{3,5}(F_{ij}^{(lk)})\\ \gamma\in\Gamma_5(F_{ij}^{(lk)}),\ \gamma'\in\Gamma_3(F_{ij}^{(lk)})\\ \overline{ilkj}\subset\gamma}}\!\!\!\!\! {\rm lk}(f(\lambda))^2
\Bigg)\\
&=&\sum_{\substack{\lambda=\gamma\cup\gamma'\in\Gamma_{3,5}(K_8)\\ \gamma\in\Gamma_5(K_8),\ \gamma'\in\Gamma_3(K_8)\\ \overline{kl}\subset\gamma}}\!\!\!\!\! {\rm lk}(f(\lambda))^2, \nonumber
\end{eqnarray}
\begin{eqnarray}
&& \sum_{\substack{1\le i<j\le 8\\ i,j\neq k,l}}
\sum_{\substack{\lambda\in\Gamma_{3,3}(F_{ij}^{(kl)})\\ \overline{iklj}\not\subset\gamma}}{\rm lk}(f(\lambda))^2
= 
9\sum_{\lambda\in\Gamma_{3,3}(K_6^{(kl)})}{\rm lk}(f(\lambda))^2,\label{f788}\\
&& \sum_{\substack{1\le i<j\le 8\\ i,j\neq k,l}}
\sum_{\substack{\lambda\in\Gamma_{3,3}(F_{ij}^{(lk)})\\ \overline{ilkj}\not\subset\gamma}}{\rm lk}(f(\lambda))^2
= 
9\sum_{\lambda\in\Gamma_{3,3}(K_6^{(kl)})}{\rm lk}(f(\lambda))^2.\label{f789}
\end{eqnarray}
By combining (\ref{f781}), (\ref{f782}), (\ref{f783}), (\ref{f787}), (\ref{f788}) and (\ref{f789}) with (\ref{f78}), we have 
\begin{eqnarray}\label{f7810}
&&2\sum_{\substack{\gamma\in\Gamma_8(K_8)\\ \overline{kl}\subset\gamma}}a_2(f(\gamma))+36\sum_{\gamma\in\Gamma_6(K_6^{(kl)})}a_2(f(\gamma))\\
&&-2\sum_{\substack{\gamma\in\Gamma_7(K_8)\\ \overline{kl}\subset\gamma}}a_2(f(\gamma))-40\sum_{\gamma\in\Gamma_5(K_6^{(kl)})}a_2(f(\gamma))\nonumber\\
&=&
\sum_{\substack{\lambda=\gamma\cup\gamma'\in\Gamma_{3,5}(K_8)\\ \gamma\in\Gamma_5(K_8),\ \gamma'\in\Gamma_3(K_8)\\ \overline{kl}\subset\gamma}}\!\!\!\!{\rm lk}(f(\lambda))^2+18\sum_{\lambda\in\Gamma_{3,3}(K_6^{(kl)})}\!\!\!\!{\rm lk}(f(\lambda))^2-30.\nonumber
\end{eqnarray}
Then, by applying Theorem \ref{mainthm} in the case of $n=6$ to the embedding $f$ restricted to $K_6^{(kl)}$, we have 
\begin{eqnarray}\label{f7811}
&& 36\sum_{\gamma\in\Gamma_6(K_6^{(kl)})}\!\!\!\!\! a_2(f(\gamma))-40\sum_{\gamma\in\Gamma_5(K_6^{(kl)})}\!\!\!\!\! a_2(f(\gamma))\\
&=&
18\Bigg(
2\bigg(
\sum_{\gamma\in\Gamma_6(K_6^{(kl)})}\!\!\!\!\! a_2(f(\gamma))-\sum_{\gamma\in\Gamma_5(K_6^{(kl)})}\!\!\!\!\! a_2(f(\gamma))
\bigg)\Bigg)-4\sum_{\gamma\in\Gamma_5(K_6^{(kl)})}\!\!\!\!\! a_2(f(\gamma))\nonumber\\
&=&
18\bigg(
\sum_{\lambda\in\Gamma_{3,3}(K_6^{(kl)})}\!\!\!\!\! a_2(f(\lambda))^2-1
\bigg)-4\sum_{\gamma\in\Gamma_5(K_6^{(kl)})}\!\!\!\!\! a_2(f(\gamma)).\nonumber
\end{eqnarray}
By combining (\ref{f7811}) with (\ref{f7810}), we have 
\begin{eqnarray}\label{fij}
&&2\sum_{\substack{\gamma\in\Gamma_8(K_8)\\ \overline{kl}\subset\gamma}}a_2(f(\gamma))
-2\sum_{\substack{\gamma\in\Gamma_7(K_8)\\ \overline{kl}\subset\gamma}}a_2(f(\gamma))
-4\sum_{\gamma\in\Gamma_5(K_6^{(kl)})}a_2(f(\gamma))\\
&=&
\sum_{\substack{\lambda=\gamma\cup\gamma'\in\Gamma_{3,5}(K_8)\\ \gamma\in\Gamma_5(K_8),\ \gamma'\in\Gamma_3(K_8)\\ \overline{kl}\subset\gamma}}{\rm lk}(f(\lambda))^2-12.\nonumber
\end{eqnarray}
Now we take the sum of both side of (\ref{fij}) over $1\leq k<l \leq8$. 
For a $t$-cycle $\gamma$ of $K_8$, let $\overline{kl}$ be an edge of $K_8$ which is contained in $\gamma$. Note that there are $t$ ways to choose such two vertices $k$ and $l$. This implies that
\begin{eqnarray}
\sum_{1\leq k<l\leq8}\label{fij1}
\sum_{\substack{\gamma\in\Gamma_{t}(K_8)\\ \overline{kl}\subset\gamma}}a_2(f(\gamma))=t\sum_{\gamma\in\Gamma_{t}(K_8)}a_2(f(\gamma)).
\end{eqnarray}
For a $5$-cycle $\gamma$ of $K_8$, let $k$ and $l$ be two distinct vertices of $K_8$ which are not contained in $\gamma$. Then $\gamma$ is a $5$-cycle of $K_6^{(kl)}$. Note that there are three ways to choose such two vertices $k$ and $l$. This implies that
\begin{eqnarray}
\sum_{1\leq k<l\leq 8}\label{fij3}
\sum_{\gamma\in\Gamma_5(K_6^{(kl)})}a_2(f(\gamma))=3\sum_{\gamma\in\Gamma_5(K_8)}a_2(f(\gamma)).
\end{eqnarray}
On the other hand, we have 
%
\begin{eqnarray}
&&\sum_{1\leq k<l\leq8}\label{fij4}
\sum_{\substack{\lambda=\gamma\cup\gamma'\in\Gamma_{3,5}(K_8)\\ \gamma\in\Gamma_5(K_8),\ \gamma'\in\Gamma_3(K_8)\\ \overline{kl}\subset\gamma}}{\rm lk}(f(\lambda))^2
=5\sum_{\lambda\in\Gamma_{3,5}(K_8)}{\rm lk}(f(\lambda))^2.
\end{eqnarray}
By combining (\ref{fij1}), (\ref{fij3}) and (\ref{fij4}) with (\ref{fij}), we have the result.
\end{proof}

As we mentioned before, (\ref{ilKK0}) has already been shown if $p=3,\ q=4$ \cite[Theorem 2.3]{MN19}. Namely for any spatial embedding $f$ of $K_{7}$, we have 
\begin{eqnarray}\label{k73433}
\sum_{\lambda\in \Gamma_{3,4}(K_{7})}{\rm lk}(f(\lambda))^{2} 
= 2 \sum_{\lambda\in \Gamma_{3,3}(K_{7})}{\rm lk}(f(\lambda))^{2}. 
\end{eqnarray}

Moreover, the following has also been shown by the authors in \cite[Lemma 2.1 (2)]{MN19}. 

\begin{Lemma}\label{kn3433}
For any spatial embedding $f$ of $K_n$ with $n\ge 7$, we have 
\begin{eqnarray*}
\sum_{\lambda\in \Gamma_{3,4}(K_{n})}{\rm lk}(f(\lambda))^{2} 
= 2(n-6) \sum_{\lambda\in \Gamma_{3,3}(K_{n})}{\rm lk}(f(\lambda))^{2}. 
\end{eqnarray*}
\end{Lemma}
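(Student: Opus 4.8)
The plan is to deduce the general case from the known seven-vertex identity (\ref{k73433}) by summing it over all complete subgraphs of $K_n$ on seven vertices. For a $7$-element subset $S$ of the vertex set $\{1,2,\dots,n\}$, write $K_7[S]$ for the induced complete subgraph; then $f|_{K_7[S]}$ is a spatial embedding of $K_7$, so (\ref{k73433}) gives
\begin{equation*}
\sum_{\lambda\in\Gamma_{3,4}(K_7[S])}{\rm lk}(f(\lambda))^2 = 2\sum_{\lambda\in\Gamma_{3,3}(K_7[S])}{\rm lk}(f(\lambda))^2.
\end{equation*}
I would sum this over all $\binom{n}{7}$ choices of $S$ and then interchange the order of summation on each side.

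For the left-hand side: a pair $\lambda\in\Gamma_{3,4}(K_n)$ of disjoint cycles of type $(3,4)$ occupies exactly $3+4=7$ vertices, hence it is a type-$(3,4)$ pair of $K_7[S]$ precisely when $S$ equals the vertex set of $\lambda$; thus each such $\lambda$ is counted exactly once, and $\sum_{S}\sum_{\lambda\in\Gamma_{3,4}(K_7[S])}{\rm lk}(f(\lambda))^2 = \sum_{\lambda\in\Gamma_{3,4}(K_n)}{\rm lk}(f(\lambda))^2$. For the right-hand side: a pair $\lambda\in\Gamma_{3,3}(K_n)$ occupies exactly $6$ vertices, so it lies in $K_7[S]$ exactly when $S$ contains the vertex set of $\lambda$, and there are $n-6$ such $S$; hence $\sum_{S}\sum_{\lambda\in\Gamma_{3,3}(K_7[S])}{\rm lk}(f(\lambda))^2 = (n-6)\sum_{\lambda\in\Gamma_{3,3}(K_n)}{\rm lk}(f(\lambda))^2$. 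Combining the two counts yields the asserted formula.

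This is the same counting device already used in the proof of Lemma \ref{K8-48}, where the multiplicity $8-6=2$ appears as (\ref{K8-483}), now carried out at the level of seven-vertex subgraphs, and there is no serious obstacle. The only point requiring care is the bookkeeping of multiplicities — verifying that type-$(3,4)$ pairs span seven vertices and type-$(3,3)$ pairs span six, so that the two incidence counts are $1$ and $n-6$ respectively — but this is immediate. An equivalent alternative would be to induct on $n$ by restricting $f$ to the subgraphs $K_{n-1}^{(m)}$ and using the analogue of (\ref{K8-481}); that route also works but requires the slightly more delicate edge-and-cycle counting seen in Lemma \ref{413}, so the direct summation over $K_7$-subgraphs is the cleaner choice.
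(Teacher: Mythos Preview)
Your proof is correct. The paper does not actually reprove this lemma here --- it is quoted from \cite[Lemma 2.1 (2)]{MN19} --- but your argument is precisely the technique the paper employs for the parallel Lemma \ref{LL2}, where the $n=8$ identity (\ref{3533}) is summed over all $K_8$-subgraphs with the multiplicity $\binom{n-6}{2}$ on the $(3,3)$-side; you have simply run the same averaging with $K_7$-subgraphs and multiplicities $1$ and $n-6$.
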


On the other hand, we also have the following in the case of $n=8$. 

\begin{Lemma}\label{413b}
For any spatial embedding $f$ of $K_8$, we have 
\begin{eqnarray*}
8\sum_{\lambda\in\Gamma_{4,4}(K_8)}{\rm lk}(f(\lambda))^2+5\sum_{\lambda\in\Gamma_{3,5}(K_8)}{\rm lk}(f(\lambda))^2=36\sum_{\lambda\in\Gamma_{3,3}(K_8)}{\rm lk}(f(\lambda))^2.
\end{eqnarray*}
\end{Lemma}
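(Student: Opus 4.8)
The plan is to mimic the structure of the proof of Lemma \ref{413}, but starting from a Conway--Gordon-type identity in which the relevant terms involve only linking numbers of type $(3,3)$, $(3,4)$, and $(4,4)$, with no $a_{2}$ contributions surviving, so that summing over an appropriate family of subdivided subgraphs produces exactly the three sums appearing in the statement. Concretely, I would apply a known type-$(p,q)$ linking identity on a copy of $K_{7}$ sitting inside $K_{8}$: for each pair of distinct vertices $k,l$ of $K_{8}$, the subgraph $K_{7}^{(k)}$ carries the identities \eqref{k73433} and Theorem \ref{CG}(1), and more to the point the subdivided subgraphs $F_{ij}^{(kl)}$ and $F_{ij}^{(lk)}$ (homeomorphic to $K_{6}$) carry Conway--Gordon for $K_{6}$ in the form $\sum_{\lambda\in\Gamma_{3,3}}{\rm lk}(f(\lambda))^{2}\ge 1$ with the exact refinement of Theorem \ref{mainthm} at $n=6$. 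Because $F_{ij}^{(kl)}$ is $K_{6}$ with the edge $\overline{ij}$ subdivided by two extra vertices $k,l$, a $3$-cycle of $F_{ij}^{(kl)}$ through the subdividing arc $\overline{iklj}$ is a $5$-cycle of $K_{8}$, and a $3$-cycle not through it is an honest $3$-cycle; similarly a cycle playing the role of the ``other'' component can be a $3$-cycle, giving rise to both $(3,5)$- and $(3,3)$-type links upstairs, and — crucially — choosing the subdivided arc on the other side yields $4$-cycles, producing $(4,4)$ links. This is the mechanism by which all three of $\Gamma_{4,4}(K_{8})$, $\Gamma_{3,5}(K_{8})$, and $\Gamma_{3,3}(K_{8})$ enter a single identity.

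The key steps, in order, would be: (1) fix $k,l$ and write down the $n=6$ case of Theorem \ref{mainthm} applied to $f|_{F_{ij}^{(kl)}}$ and to $f|_{F_{ij}^{(lk)}}$, reading off which constituent knots and links of $K_{6}$ correspond to which cycles of $K_{8}$ via the subdivision; (2) sum over all admissible $i<j$ with $i,j\neq k,l$, using counting lemmas of the same flavour as \eqref{f781}--\eqref{f789} — namely, each cycle of $K_{8}$ containing $\overline{kl}$ as a ``long diagonal'' (the two arcs $\overline{ik},\overline{kl},\overline{lj}$ in succession) is counted once across the $F$'s, while each cycle avoiding the deleted configuration is overcounted by a fixed combinatorial factor; (3) invoke the $n=6$ Conway--Gordon identity on $K_{6}^{(kl)}$ itself (as in \eqref{f7811}) to eliminate the residual $a_{2}$ sums over $\Gamma_{6}(K_{6}^{(kl)})$ and $\Gamma_{5}(K_{6}^{(kl)})$, so that after this cancellation only linking-number-squared terms remain; (4) sum the resulting per-$(k,l)$ identity over all $\binom{8}{2}$ pairs $k<l$, again using counting identities like \eqref{fij1}--\eqref{fij4} (a $t$-cycle of $K_{8}$ contains $t$ edges, an $(p,q)$-link is counted according to how many edges its components have, etc.); and (5) read off the coefficients. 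One should also feed in Lemma \ref{kn3433} at $n=8$, i.e. $\sum_{\Gamma_{3,4}(K_{8})}{\rm lk}^{2}=4\sum_{\Gamma_{3,3}(K_{8})}{\rm lk}^{2}$, to clear any $(3,4)$-type terms that appear as intermediate links (a $4$-cycle of $K_{8}$ through the subdivided arc becomes a $3$-cycle of $K_{6}$).

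The main obstacle I anticipate is the bookkeeping in steps (2) and (4): getting every multiplicity right. In particular one must be careful that the two subdivisions $F_{ij}^{(kl)}$ and $F_{ij}^{(lk)}$ together account for each long-diagonal cycle exactly once (not twice), that a $4$-cycle versus a $5$-cycle of $K_{8}$ arises precisely according to how the two subdividing vertices are distributed between the two arcs of the subdivided edge, and that the ``spectator'' component (the $\Gamma_{3}$ factor) is counted with the correct number of complementary-edge choices. A secondary subtlety is ensuring the $a_{2}$-terms genuinely cancel: the terms $\sum_{\Gamma_{6}}a_{2}$ and $\sum_{\Gamma_{5}}a_{2}$ over the $K_{6}^{(kl)}$'s must appear with coefficients that are killed by the $n=6$ refinement identity, exactly as in the passage from \eqref{f7810} to \eqref{fij} — if they don't cancel, the chosen family of subgraphs is wrong and must be adjusted (e.g. by also summing over $K_{7}^{(m)}$'s using \eqref{k73433} and Lemma \ref{kn3433}). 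Once the multiplicities are pinned down, the final identity is a linear-algebra collapse with the stated coefficients $8$, $5$, and $36$.
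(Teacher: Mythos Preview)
Your proposal has a genuine gap: the mechanism you describe cannot produce any $(4,4)$-links. In $F_{ij}^{(kl)}$ (and likewise $F_{ij}^{(lk)}$) the single edge $\overline{ij}$ of $K_{6}^{(kl)}$ is replaced by the length-$3$ path $\overline{iklj}$; both extra vertices $k,l$ lie on the \emph{same} arc. Hence a $3$-cycle of $K_6$ through $\overline{ij}$ becomes a $5$-cycle of $K_8$ through $\overline{iklj}$, never a $4$-cycle, and the companion $3$-cycle (which cannot meet $\overline{ij}$) stays a $3$-cycle. So the $(3,3)$-pairs of $F_{ij}^{(kl)}$ correspond upstairs only to elements of $\Gamma_{3,3}(K_8)$ or $\Gamma_{3,5}(K_8)$; the term $\sum_{\Gamma_{4,4}(K_8)}\mathrm{lk}^2$ simply never appears. (Your remark that ``a $4$-cycle of $K_8$ through the subdivided arc becomes a $3$-cycle of $K_6$'' is impossible: a $4$-cycle containing the length-$3$ path $\overline{iklj}$ would have to close up via $\overline{ji}$, which is deleted in $F_{ij}^{(kl)}$.) Carrying your steps (1)--(4) through therefore reproduces the computation behind Lemma~\ref{413}, with residual $a_{2}$-sums over $\Gamma_{8}(K_8)$ and $\Gamma_{7}(K_8)$ and no $(4,4)$ contribution at all.

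The paper's proof avoids this by working one level up: it starts from the pure linking identity \eqref{k73433} on $K_7$ (no $a_2$ terms to cancel) and pushes it to $K_8$ via the \emph{one}-vertex subdivisions $F_{ij}^{(m)}$. In $F_{ij}^{(m)}\cong K_7$, a $(3,4)$-pair becomes a $(4,4)$-pair of $K_8$ when $\overline{ij}$ lies in the $3$-cycle, a $(3,5)$-pair when $\overline{ij}$ lies in the $4$-cycle, and stays $(3,4)$ otherwise; the $(3,3)$-pairs on the right of \eqref{k73433} become $(3,4)$ or stay $(3,3)$. Summing over $i<j$ and then over $m$, and clearing the intermediate $(3,4)$-terms with Lemma~\ref{kn3433} at $n=8$, yields exactly $8\sum_{\Gamma_{4,4}}+5\sum_{\Gamma_{3,5}}=36\sum_{\Gamma_{3,3}}$. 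This is the content of Lemma~\ref{L1} specialised to $n=8$, $t=3$, $s=1$, $a=2$. Your ``backup'' suggestion of bringing in \eqref{k73433} is thus the right instinct, but it has to be fed through the single-vertex subdivision machinery $F_{ij}^{(m)}$, not applied to $K_7^{(m)}$ directly (which would only recover Lemma~\ref{kn3433}) and not mixed with the $F_{ij}^{(kl)}$ argument.
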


\begin{proof}
For any spatial embedding $g$ of $K_{7}$, by (\ref{k73433}) we have 
\begin{eqnarray*}
\sum_{\lambda\in \Gamma_{3,4}(K_{7})}{\rm lk}(g(\lambda))^{2} 
= 2 \sum_{\lambda\in \Gamma_{3,3}(K_{7})}{\rm lk}(g(\lambda))^{2}. 
\end{eqnarray*}
Then we have the result by setting $n=8$, $a=2$, $t=3$ and $s=1$ in Lemma \ref{L1} which will be shown in $\S 3$. 
\end{proof}

\begin{proof}[Proof of Theorem \ref{lkthm}]
Let $f$ be a spatial embedding of $K_{8}$. Then by Theorem \ref{mainthm} in the case of $n=8$, we have 
\begin{eqnarray}\label{k8_0}
\sum_{\gamma\in\Gamma_8(K_8)}\!\!\!\!\! a_2(f(\gamma))-6\sum_{\gamma\in\Gamma_5(K_8)}\!\!\!\!\! a_2(f(\gamma))
= 3\sum_{\lambda\in\Gamma_{3,3}(K_8)}\!\!\!\!\! {\rm lk}(f(\lambda))^2
-63.
\end{eqnarray}
On the other hand, by Lemmas \ref{K8-48} and \ref{413}, we have 
\begin{eqnarray}
&&16\sum_{\gamma\in\Gamma_8(K_8)}a_2(f(\gamma))-96\sum_{\gamma\in\Gamma_5(K_8)}a_2(f(\gamma))\label{k8_1}\\
&=& 28\sum_{\lambda\in\Gamma_{3,3}(K_8)}{\rm lk}(f(\lambda))^2
+5\sum_{\lambda\in\Gamma_{3,5}(K_8)}{\rm lk}(f(\lambda))^2
-1008.\nonumber
\end{eqnarray}
Then by combining (\ref{k8_0}) and (\ref{k8_1}), we have (\ref{3533}), and by combining Lemma \ref{413b} and (\ref{3533}), we have (\ref{4433}). This completes the proof. 
\end{proof}

In a similar way as Lemma \ref{kn3433}, we have the following by (\ref{3533}).

\begin{Lemma}\label{LL2}
Let $n\ge 8$ be an integer. For any spatial embedding $f$ of $K_n$, we have 
\begin{eqnarray*}
\sum_{\lambda\in\Gamma_{3,5}(K_n)}{\rm lk}(f(\lambda))^2=2(n-6)(n-7)\sum_{\lambda\in\Gamma_{3,3}(K_n)}{\rm lk}(f(\lambda))^2.
\end{eqnarray*}
\end{Lemma}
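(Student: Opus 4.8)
The plan is to deduce Lemma \ref{LL2} from the $n=8$ identity (\ref{3533}) by the same ``averaging over sub-complete-graphs'' technique that produced Lemma \ref{kn3433} from (\ref{k73433}), i.e. a downward induction on the number of vertices combined with a double-counting of how each pair of disjoint cycles sits inside the various $K_{n-1}^{(m)}$. Concretely, I would induct on $n\ge 8$, the base case $n=8$ being exactly (\ref{3533}). For the inductive step, assume the formula holds for $K_{n-1}$; apply it to the restriction of $f$ to each $K_{n-1}^{(m)}$ for $m=1,\dots,n$, sum the $n$ resulting equations, and then re-express each side as a multiple of the corresponding sum over $K_n$.

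The two combinatorial facts I need are: first, a pair $\lambda\in\Gamma_{3,3}(K_n)$ uses $6$ of the $n$ vertices, so it lies in exactly $n-6$ of the subgraphs $K_{n-1}^{(m)}$, giving
\begin{eqnarray*}
\sum_{m=1}^{n}\sum_{\lambda\in\Gamma_{3,3}(K_{n-1}^{(m)})}{\rm lk}(f(\lambda))^2
=(n-6)\sum_{\lambda\in\Gamma_{3,3}(K_n)}{\rm lk}(f(\lambda))^2;
\end{eqnarray*}
second, a pair $\lambda\in\Gamma_{3,5}(K_n)$ uses $8$ vertices, so it lies in exactly $n-8$ of the $K_{n-1}^{(m)}$, giving the analogous identity with factor $n-8$ on the right. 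Plugging these into the summed inductive hypothesis $\sum_m\big(\sum_{\Gamma_{3,5}(K_{n-1}^{(m)})}{\rm lk}^2 = 2(n-7)(n-8)\sum_{\Gamma_{3,3}(K_{n-1}^{(m)})}{\rm lk}^2\big)$ yields
\begin{eqnarray*}
(n-8)\sum_{\lambda\in\Gamma_{3,5}(K_n)}{\rm lk}(f(\lambda))^2
=2(n-7)(n-8)\,(n-6)\sum_{\lambda\in\Gamma_{3,3}(K_n)}{\rm lk}(f(\lambda))^2,
\end{eqnarray*}
and dividing by $n-8$ (valid since $n\ge 9$ in the inductive step) gives the claim. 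It may be cleaner still to record a general lemma of the shape ``if $\sum_{\Gamma_{3,q}(K_{n_0})}{\rm lk}^2 = c\sum_{\Gamma_{3,3}(K_{n_0})}{\rm lk}^2$ then $\sum_{\Gamma_{3,q}(K_{n})}{\rm lk}^2 = c\,\frac{(n-3)!\,(n_0-q)}{(n_0-3)!\,(n-q)}\cdots\sum_{\Gamma_{3,3}(K_n)}{\rm lk}^2$'' — indeed this is presumably what ``Lemma \ref{L1}'' referenced in the proof of Lemma \ref{413b} does — and then simply quote it with the appropriate parameters; that would make the proof a one-liner.

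The only subtlety, and the thing to state carefully, is the vertex-counting multiplicities: a $\Gamma_{p,q}$ pair occupies $p+q$ vertices, hence survives the deletion of exactly $n-(p+q)$ of the other vertices, and one must make sure no pair is double-counted (it isn't, since a pair either lies entirely in $K_{n-1}^{(m)}$ or not). There is no genuine geometric obstacle here — the linking-number-squared of a fixed constituent link is an intrinsic quantity unaffected by which ambient sub-graph we view it in — so the ``hard part'' is merely bookkeeping: getting the binomial/factorial multiplicities right and handling the division by $n-(p+q)$ at the base, which is why the induction must start at $n=8$ (where $\Gamma_{3,5}$ first appears with both cycles of the required sizes) rather than lower. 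Given that Lemma \ref{kn3433} and its proof are already in \cite{MN19}, the cleanest exposition is to say ``the argument is identical to that of Lemma \ref{kn3433}, replacing (\ref{k73433}) by (\ref{3533}) and the multiplicities $n-6$, $n-7$ by $n-6$, $n-7$, $n-8$,'' and leave the routine verification to the reader.
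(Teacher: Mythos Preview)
Your argument is correct, and the underlying idea---double-counting pairs of disjoint cycles over sub-complete-graphs---is exactly the paper's. The execution differs slightly, though: rather than inducting on $n$ via the $K_{n-1}^{(m)}$'s and dividing through by $n-8$, the paper sums (\ref{3533}) directly over all $\binom{n}{8}$ subgraphs of $K_n$ isomorphic to $K_8$. Since each $\lambda\in\Gamma_{3,5}(K_n)$ uses $8$ vertices it lies in exactly one such $K_8$, while each $\lambda\in\Gamma_{3,3}(K_n)$ lies in exactly $\binom{n-6}{2}$ of them; the identity $4\cdot\binom{n-6}{2}=2(n-6)(n-7)$ then gives the result in a single step, with no induction and no division. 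Your inductive version is perfectly valid and arrives at the same place; the direct $K_8$-averaging just compresses it into one line and sidesteps the need to treat $n=8$ separately from $n\ge 9$.
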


\begin{proof}
Note that each pair of two disjoint $3$-cycles of $K_n$ is shared by exactly $\binom{n-6}{2}$ subgraphs isomorphic to $K_8$ if $n\ge8$. Then by applying (\ref{3533}) to the embedding $f$ restricted to each of the subgraphs of $K_n$ isomorphic to $K_8$ and taking the sum of both sides of this over all of them, we have the result.
\end{proof}

\section{Proof of Theorem \ref{lkrefine}: General case}

In this section, we show Theorem \ref{lkrefine} in the case of $n\ge 9$. First we show two lemmas which are needed later.

\begin{Lemma}\label{L1}
Let $n\ge 7$ be an integer and $t,s$ two integers satisfying $n-1=2t+s$, where $t\ge 3$ and $s\ge 0$. 
Assume that there exist a constant $a$ such that
\begin{eqnarray}\label{31assume}
\sum_{\lambda\in\Gamma_{t,t+s}(K_{n-1})}{\rm lk}(g(\lambda))^2=a\sum_{\lambda\in\Gamma_{3,3}(K_{n-1})}{\rm lk}(g(\lambda))^2
\end{eqnarray}
for any spatial embedding $g$ of $K_{n-1}$. Then for any spatial embedding $f$ of $K_n$, we have 
\begin{eqnarray}\label{L10}
\sum_{\lambda\in\Gamma_{t,t+1}(K_n)}{\rm lk}(f(\lambda))^2=2a(n-6)\sum_{\lambda\in\Gamma_{3,3}(K_n)}{\rm lk}(f(\lambda))^2
\end{eqnarray}
if $s=0$, 
\begin{eqnarray}\label{L11}
&&2(t+1)\sum_{\lambda\in\Gamma_{t+1,t+1}(K_n)}{\rm lk}(f(\lambda))^2+(t+2)\sum_{\lambda\in\Gamma_{t,t+2}(K_n)}{\rm lk}(f(\lambda))^2\\
&=&a(n-6)(2t+3)\sum_{\lambda\in\Gamma_{3,3}(K_n)}{\rm lk}(f(\lambda))^2\nonumber
\end{eqnarray}
if $s=1$ and 
\begin{eqnarray}\label{L12}
&&(t+1)\sum_{\lambda\in\Gamma_{t+1,t+s}(K_n)}{\rm lk}(f(\lambda))^2+(t+s+1)\sum_{\lambda\in\Gamma_{t,t+s+1}(K_n)}{\rm lk}(f(\lambda))^2\\
&=&a(n-6)(2t+s+2)\sum_{\lambda\in\Gamma_{3,3}(K_n)}{\rm lk}(f(\lambda))^2\nonumber
\end{eqnarray}
if $s\ge 2$. 
\end{Lemma}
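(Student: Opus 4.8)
The plan is to run the same averaging / counting scheme over subgraphs that was used for Lemmas \ref{K8-48}, \ref{413} and \ref{kn3433}, but now feeding in the hypothesis (\ref{31assume}) instead of a known identity. Concretely, for a spatial embedding $f$ of $K_n$ I would consider the $n$ subgraphs $K_{n-1}^{(m)}$ ($m=1,\ldots,n$) and, when $s\ge 1$, also the ``subdivided'' subgraphs $F_{ij}^{(kl)}$ and $F_{ij}^{(lk)}$ that are homeomorphic to $K_{n-1}$. Applying the hypothesis (\ref{31assume}) to the restriction of $f$ to each such subgraph produces an identity relating $\sum {\rm lk}^2$ over $\Gamma_{t,t+s}$-type links inside that subgraph to $\sum {\rm lk}^2$ over $\Gamma_{3,3}$-type links inside it; summing these identities over all the subgraphs and collecting coefficients via elementary combinatorial counting yields a relation purely among $\sum_{\lambda\in\Gamma_{p,q}(K_n)}{\rm lk}(f(\lambda))^2$ for the relevant pairs $(p,q)$ and $\sum_{\lambda\in\Gamma_{3,3}(K_n)}{\rm lk}(f(\lambda))^2$.

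For the case $s=0$ ($n-1=2t$), the argument is the direct analogue of Lemma \ref{kn3433}: each pair of disjoint cycles of type $(t,t+1)$ in $K_n$, and each pair of type $(3,3)$ in $K_n$, lies in a predictable number of the $K_{n-1}^{(m)}$'s (a $(t,t+1)$-pair uses $2t+1=n$ vertices, so it survives in exactly... wait, it is Hamiltonian in $K_n$ minus... rather: a constituent link on $n-1$ vertices lies in exactly one $K_{n-1}^{(m)}$, while a $(3,3)$-pair lies in $\binom{n-6}{1}=n-6$ of them). Carrying out this bookkeeping turns the sum of the $n$ copies of (\ref{31assume}) into $\sum_{\lambda\in\Gamma_{t,t+1}(K_n)}{\rm lk}(f(\lambda))^2 = a(n-6)\,\text{(something)}\sum_{\lambda\in\Gamma_{3,3}(K_n)}{\rm lk}(f(\lambda))^2$; matching the numerology in (\ref{L10}) fixes the factor $2$. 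The cases $s\ge 1$ are handled by the $F$-subgraph technique exactly as in the proof of Lemma \ref{413}: a $(t+1,t+s)$- or $(t,t+s+1)$-cycle pair in $K_n$ whose ``long'' component contains a chosen edge $\overline{kl}$ appears, after subdivision, as a $(t,t+s)$-type pair (the long component now of length one less) in precisely one of $F_{ij}^{(kl)}$, $F_{ij}^{(lk)}$; summing over the edge $\overline{kl}$ contributes the multiplicities $t+1$ and $t+s+1$ seen in (\ref{L11}), (\ref{L12}), while the $(3,3)$-side picks up the global factor $a(n-6)(2t+s+2)$ after the dust settles. One then has to check that the contributions of the ``short'' cycles not meeting $\overline{kl}$, and of the $K_{n-2}^{(kl)}$ pieces, either cancel or reorganize into the stated form, just as the $a_2$ and intermediate $K_6^{(kl)}$ terms did in Lemmas \ref{413} and \ref{413b}.

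The main obstacle I expect is purely organizational rather than conceptual: getting every combinatorial multiplicity exactly right. In particular one must carefully separate the links in $\Gamma_{t,t+s}(F_{ij}^{(kl)})$ according to whether the subdividing path $\overline{iklj}$ sits in the cycle of length-related-to-$t+s$ or in the one of length-related-to-$t$ (this is why (\ref{L11}), with $s=1$, has two disjoint-cycle types on the left but (\ref{L12}), with $s\ge 2$, also has two yet with different coefficients — the split between ``short cycle contains $\overline{kl}$'' and ``long cycle contains $\overline{kl}$'' behaves differently when the two cycle lengths could or could not coincide after the modification). The $s=1$ case additionally needs the symmetrization that produced the factor $2(t+1)$ in front of $\Gamma_{t+1,t+1}$, reflecting that an unordered pair of equal-length cycles is counted with an extra symmetry, exactly as in (\ref{4433}) versus (\ref{3533}). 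Once the counting is pinned down, no knot theory beyond (\ref{31assume}) is used, so the proof reduces to assembling a linear combination of the summed identities and reading off the coefficients.
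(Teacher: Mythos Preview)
Your overall plan---average the hypothesis over subgraphs of $K_n$ homeomorphic to $K_{n-1}$ and sort out the combinatorics---is exactly right, and is what the paper does. But you have the wrong subgraphs in both cases, and in the $s=0$ case this is a genuine gap, not just bookkeeping.

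First, the subgraphs $F_{ij}^{(kl)}$ and $F_{ij}^{(lk)}$ are homeomorphic to $K_{n-2}$, not $K_{n-1}$ (they are obtained from $K_{n-2}^{(kl)}$ by subdividing $\overline{ij}$ with \emph{two} vertices $k,l$). The hypothesis (\ref{31assume}) is about $K_{n-1}$, so you cannot apply it to these; they are the subgraphs used in Lemma~\ref{L2}, whose hypothesis concerns $K_{n-2}$. The correct family for Lemma~\ref{L1} is the single-vertex subdivisions $F_{ij}^{(m)}$, which \emph{are} homeomorphic to $K_{n-1}$. Your multiplicities $t+1$ and $t+s+1$ are in fact the vertex-counts that come from summing over the subdividing vertex $m$, so your intuition for $s\ge 1$ is close; it is the notation (and the model from Lemma~\ref{413}, which is a two-step subdivision) that has gone astray.

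Second, and more seriously, the $s=0$ case does not work using only the deleted subgraphs $K_{n-1}^{(m)}$. When $s=0$ a $(t,t+1)$-pair in $K_n$ occupies all $2t+1=n$ vertices, so it lies in \emph{none} of the $K_{n-1}^{(m)}$; summing (\ref{31assume}) over $m$ produces a relation for $\sum_{\Gamma_{t,t}(K_n)}$, never for $\sum_{\Gamma_{t,t+1}(K_n)}$. You catch yourself mid-sentence (``a $(t,t+1)$-pair uses $2t+1=n$ vertices, so it survives in exactly\ldots wait'') but then switch to counting links on $n-1$ vertices, which are $(t,t)$-pairs. The paper fixes this by using $F_{ij}^{(m)}$ in the $s=0$ case as well: a $(t,t)$-pair in $F_{ij}^{(m)}$ whose $t$-cycle contains the subdividing path $\overline{imj}$ is precisely a $(t+1,t)$-pair in $K_n$, and this is the only mechanism producing $\Gamma_{t,t+1}(K_n)$ from the $K_{n-1}$ hypothesis.

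Finally, note that on the $(3,3)$-side the $F_{ij}^{(m)}$ argument generates a $\Gamma_{3,4}(K_n)$ term (from $(3,3)$-pairs in $F_{ij}^{(m)}$ with $\overline{imj}$ in one triangle), and the paper invokes Lemma~\ref{kn3433} to convert that back to $\Gamma_{3,3}(K_n)$; make sure this step is explicit rather than absorbed into ``after the dust settles.''
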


\begin{proof}
Let $f$ be a spatial embedding of $K_n$. First we assume that $s\ge 1$. For the embedding $f$ restricted to $F_{ij}^{(m)}$, by (\ref{31assume}) we have 
\begin{eqnarray}\label{l11fijm}
&& \sum_{\substack{\lambda=\gamma\cup\gamma'\in\Gamma_{t+1,t+s}(F_{ij}^{(m)})\\ \gamma\in\Gamma_{t+1}(F_{ij}^{(m)}),\ \gamma'\in\Gamma_{t+s}(F_{ij}^{(m)})\\ \overline{imj}\subset\gamma}}{\rm lk}(f(\lambda))^2
+\sum_{\substack{\lambda=\gamma\cup\gamma'\in\Gamma_{t,t+s+1}(F_{ij}^{(m)})\\ \gamma\in\Gamma_{t}(F_{ij}^{(m)}),\ \gamma'\in\Gamma_{t+s+1}(F_{ij}^{(m)})\\ \overline{imj}\subset\gamma'}}{\rm lk}(f(\lambda))^2 \\
&& +\sum_{\substack{\lambda\in\Gamma_{t,t+s}(K_{n-1}^{(m)})\\ \overline{ij}\not\subset\lambda}}{\rm lk}(f(\lambda))^2 \nonumber\\
&=& a\Bigg( \sum_{\substack{\lambda=\gamma\cup\gamma'\in\Gamma_{3,4}(F_{ij}^{(m)})\\ \gamma\in\Gamma_4(F_{ij}^{(m)}),\ \gamma'\in\Gamma_3(F_{ij}^{(m)})\\ \overline{imj}\subset\gamma}}{\rm lk}(f(\lambda))^2
+\sum_{\substack{\lambda\in\Gamma_{3,3}(K_{n-1}^{(m)})\\ \overline{ij}\not\subset\lambda}}{\rm lk}(f(\lambda))^2\Bigg).\nonumber
\end{eqnarray}
Let us take the sum of both side of (\ref{l11fijm}) over $1\leq i<j\leq n$ and $i, j\neq m$. 
For a pair of disjoint cycles $\lambda$ of $K_n$ 
consisting of a $(t+1)$-cycle $\gamma$ which contains 
the vertex $m$ and a $(t+s)$-cycle $\gamma'$, let $i$ and $j$ 
be the two vertices of $K_n$ which are adjacent to $m$ in $\gamma$. 
Then $\lambda$ is a pair of disjoint cycles of $F_{ij}^{(m)}$ 
consisting of a $(t+1)$-cycle $\gamma$ which contains $\overline{imj}$ 
and a $(t+s)$-cycle $\gamma'$. This implies that 
\begin{eqnarray}\label{l11t1tl}
&& \sum_{\substack{1\leq i<j\leq n\\ i,j\neq m}}\sum_{\substack{\lambda=\gamma\cup\gamma'\in\Gamma_{t+1,t+s}(F_{ij}^{(m)})\\ \gamma\in\Gamma_{t+1}(F_{ij}^{(m)}),\ \gamma'\in\Gamma_{t+s}(F_{ij}^{(m)}) \\ \overline{imj}\subset\gamma}}\!\!\!\!\! {\rm lk}(f(\lambda))^2
= 
\sum_{\substack{\lambda=\gamma\cup\gamma'\in\Gamma_{t+1,t+s}(K_n)\\ \gamma\in\Gamma_{t+1}(K_n),\ \gamma'\in\Gamma_{t+s}(K_n) \\ m\subset\gamma}}\!\!\!\!\! {\rm lk}(f(\lambda))^2. 
\end{eqnarray}
In the same way as (\ref{l11t1tl}), we also have 
\begin{eqnarray}
&& \sum_{\substack{1\leq i<j\leq n\\ i,j\neq m}}\sum_{\substack{\lambda=\gamma\cup\gamma'\in\Gamma_{t,t+s+1}(F_{ij}^{(m)})\\ \gamma\in\Gamma_t(F_{ij}^{(m)}),\ \gamma'\in\Gamma_{t+s+1}(F_{ij}^{(m)}) \\ \overline{imj}\subset\gamma'}}\!\!\!\!\! {\rm lk}(f(\lambda))^2
= 
\sum_{\substack{\lambda=\gamma\cup\gamma'\in\Gamma_{t,t+s+1}(K_n)\\ \gamma\in\Gamma_t(K_n),\ \gamma'\in\Gamma_{t+s+1}(K_n) \\ m\subset\gamma'}}\!\!\!\!\! {\rm lk}(f(\lambda))^2,\label{l11ttl1}\\
&& \sum_{\substack{1\leq i<j\leq n\\ i,j\neq m}}\sum_{\substack{\lambda=\gamma\cup\gamma'\in\Gamma_{3,4}(F_{ij}^{(m)})\\ \gamma\in\Gamma_{4}(F_{ij}^{(m)}),\ \gamma'\in\Gamma_3(F_{ij}^{(m)}) \\ \overline{imj}\subset\gamma}}\!\!\!\!\! {\rm lk}(f(\lambda))^2
= \sum_{\substack{\lambda=\gamma\cup\gamma'\in\Gamma_{3,4}(K_n)\\ \gamma\in\Gamma_4(K_n),\ \gamma'\in\Gamma_3(K_n)\\ 
m\subset\gamma}}\!\!\!\!\! {\rm lk}(f(\lambda))^2. \label{l1143}
\end{eqnarray}
%
For a pair of disjoint cycles $\lambda$ of $K_{n-1}^{(m)}$ 
consisting of a $t$-cycle and a $(t+s)$-cycle, let $\overline{ij}$ 
be an edge of $K_{n-1}^{(m)}$ which is not contained in $\lambda$. 
Note that there are $\binom{n-1}{2}-(2t+s)$ ways to choose such a 
pair of $i$ and $j$. This implies that 
\begin{eqnarray}\label{l11ttl}
\sum_{\substack{1\leq i<j\leq n\\ i,j\neq m}}\sum_{\substack{\lambda\in\Gamma_{t,t+s}(K_{n-1}^{(m)})\\ \overline{ij}\not\subset\lambda}}\!\!\!\!\!\!\!\!\!\! {\rm lk}(f(\lambda))^2
= \left( \binom{n-1}{2}-(2t+s) \right)\!\!\! \sum_{\lambda\in\Gamma_{t,t+s}(K_{n-1}^{(m)})}\!\!\!\!\!\!\!\!\!\! {\rm lk}(f(\lambda))^2. 
\end{eqnarray}
%
%
In the same way as (\ref{l11ttl}), we also have 
\begin{eqnarray}\label{l1133}
\sum_{\substack{1\leq i<j\leq n\\ i,j\neq m}}\sum_{\substack{\lambda\in\Gamma_{3,3}(K_{n-1}^{(m)})\\ \overline{ij}\not\subset\lambda}}\!\!\!\!\! {\rm lk}(f(\lambda))^2
= \left( \binom{n-1}{2}-6 \right)\sum_{\lambda\in\Gamma_{3,3}(K_{n-1}^{(m)})}\!\!\!\!\! {\rm lk}(f(\lambda))^2.
\end{eqnarray}
By combining (\ref{l11t1tl}), (\ref{l11ttl1}),  (\ref{l1143}), (\ref{l11ttl}) and (\ref{l1133}) with (\ref{l11fijm}), we have 
\begin{eqnarray}\label{l11kn-1}
&&\sum_{\substack{\lambda=\gamma\cup\gamma'\in\Gamma_{t+1,t+s}(K_n)\\ \gamma\in\Gamma_{t+1}(K_n),\ \gamma'\in\Gamma_{t+s}(K_n)\\ m\subset\gamma}}{\rm lk}(f(\lambda))^2
+\sum_{\substack{\lambda=\gamma\cup\gamma'\in\Gamma_{t,t+s+1}(K_n)\\ \gamma\in\Gamma_{t}(K_n),\ \gamma'\in\Gamma_{t+s+1}(K_n)\\ m\subset\gamma'}}{\rm lk}(f(\lambda))^2\\
&&+\left( \binom{n-1}{2}-(2t+s) \right)\sum_{\lambda\in\Gamma_{t,t+s}(K_{n-1}^{(m)})}{\rm lk}(f(\lambda))^2\nonumber\\ 
&=&
a\sum_{\substack{\lambda=\gamma\cup\gamma'\in\Gamma_{3,4}(K_n)\\ \gamma\in\Gamma_4(K_n),\ \gamma'\in\Gamma_3(K_n)\\ m\subset\gamma}}{\rm lk}(f(\lambda))^2+a\left( \binom{n-1}{2}-6 \right)\sum_{\lambda\in\Gamma_{3,3}(K_{n-1}^{(m)})}{\rm lk}(f(\lambda))^2.\nonumber
\end{eqnarray}
Then for the embedding $f$ restricted to $K_{n-1}^{(m)}$, by the assumption we have 
\begin{eqnarray}\label{l11kn-1m}
\sum_{\lambda\in\Gamma_{t,t+s}(K_{n-1}^{(m)})}{\rm lk}(f(\lambda))^2=a\sum_{\lambda\in\Gamma_{3,3}(K_{n-1}^{(m)})}{\rm lk}(f(\lambda))^2.
\end{eqnarray} 
By combining (\ref{l11kn-1m}) with (\ref{l11kn-1}), we have 
\begin{eqnarray}\label{l11kn-1s}
&&\sum_{\substack{\lambda=\gamma\cup\gamma'\in\Gamma_{t+1,t+s}(K_n)\\ \gamma\in\Gamma_{t+1}(K_n),\ \gamma'\in\Gamma_{t+s}(K_n)\\ m\subset\gamma}}{\rm lk}(f(\lambda))^2
+\sum_{\substack{\lambda=\gamma\cup\gamma'\in\Gamma_{t,t+s+1}(K_n)\\ \gamma\in\Gamma_{t}(K_n),\ \gamma'\in\Gamma_{t+s+1}(K_n)\\ m\subset\gamma'}}{\rm lk}(f(\lambda))^2\\
&=&a\sum_{\substack{\lambda=\gamma\cup\gamma'\in\Gamma_{3,4}(K_n)\\ \gamma\in\Gamma_4(K_n),\ \gamma'\in\Gamma_3(K_n)\\ m\subset\gamma}}{\rm lk}(f(\lambda))^2+a(2t+s-6)\sum_{\lambda\in\Gamma_{3,3}(K_{n-1}^{(m)})}{\rm lk}(f(\lambda))^2.\nonumber
\end{eqnarray}

Now we assume that $s\ge 2$. Let us take the sum of both sides of (\ref{l11kn-1s}) over $m=1,2,\ldots ,n$. 
For a pair of disjoint cycles $\lambda$ of $K_n$ consisting of 
a $(t+1)$-cycle $\gamma$ and a $(t+s)$-cycle $\gamma'$, let $m$ 
be a vertex of $K_n$ which is contained in $\gamma$. Note that 
there are $t+1$ ways to choose such a vertex $m$. This implies that  
\begin{eqnarray}\label{l12t1tl2}
\sum_{m=1}^{n}\sum_{\substack{\lambda=\gamma\cup\gamma'\in\Gamma_{t+1,t+s}(K_n)\\ \gamma\in\Gamma_{t+1}(K_n),\ \gamma'\in\Gamma_{t+s}(K_n)\\ m\subset\gamma}}\!\!\!\!\! \!\!\!\!\! {\rm lk}(f(\lambda))^2
= (t+1)\sum_{\lambda\in\Gamma_{t+1,t+s}(K_n)}\!\!\!\!\! {\rm lk}(f(\lambda))^2. 
\end{eqnarray}
In the same way as (\ref{l12t1tl2}), we also have 
\begin{eqnarray}
&& \sum_{m=1}^{n}\sum_{\substack{\lambda=\gamma\cup\gamma'\in\Gamma_{t,t+s+1}(K_n)\\ \gamma\in\Gamma_t(K_n),\ \gamma'\in\Gamma_{t+s+1}(K_n)\\ m\subset\gamma'}}\!\!\!\!\! {\rm lk}(f(\lambda))^2
= (t+s+1)\sum_{\lambda\in\Gamma_{t,t+s+1}(K_n)}\!\!\!\!\! {\rm lk}(f(\lambda))^2, \label{l12ttl12}\\
&& \sum_{m=1}^{n}\sum_{\substack{\lambda=\gamma\cup\gamma'\in\Gamma_{3,4}(K_n)\\ \gamma\in\Gamma_4(K_n),\ \gamma'\in\Gamma_3(K_n)\\ m\subset\gamma}}\!\!\!\!\! {\rm lk}(f(\lambda))^2
= 4\sum_{\lambda\in\Gamma_{3,4}(K_n)}\!\!\!\!\! {\rm lk}(f(\lambda))^2. \label{l12432}
\end{eqnarray}
For a pair of two disjoint $3$-cycles $\lambda$ of $K_n$, 
let $m$ be a vertex of $K_n$ which is not contained in $\lambda$. 
Then $\lambda$ is a pair of two disjoint $3$-cycles of $K_{n-1}^{(m)}$. 
Note that there are $n-6$ ways to choose such a vertex $m$. 
This implies that 
\begin{eqnarray}\label{l12332}
\sum_{m=1}^{n}\sum_{\lambda\in\Gamma_{3,3}(K_{n-1}^{(m)})}{\rm lk}(f(\lambda))^2
= (n-6)\sum_{\lambda\in\Gamma_{3,3}(K_n)}{\rm lk}(f(\lambda))^2.
\end{eqnarray}
By combining (\ref{l12t1tl2}), (\ref{l12ttl12}), (\ref{l12432}) and (\ref{l12332}) with (\ref{l11kn-1s}), we have
\begin{eqnarray}\label{l12kn}
&&(t+1)\sum_{\lambda\in\Gamma_{t+1,t+s}(K_n)}{\rm lk}(f(\lambda))^2+(t+s+1)\sum_{\lambda\in\Gamma_{t,t+s+1}(K_n)}{\rm lk}(f(\lambda))^2\\
&=&4a\sum_{\lambda\in\Gamma_{3,4}(K_n)}{\rm lk}(f(\lambda))^2+a(2t+s-6)(n-6)\sum_{\lambda\in\Gamma_{3,3}(K_n)}{\rm lk}(f(\lambda))^2.\nonumber
\end{eqnarray}
Then by (\ref{l12kn}) and Lemma \ref{kn3433}, we have (\ref{L12}). 


Next, assume that $s=1$. Note that we also have 
\begin{eqnarray}\label{l11t1t12}
\sum_{m=1}^{n}\sum_{\substack{\lambda\in\Gamma_{t+1,t+1}(K_n)\\ m\subset\lambda}}{\rm lk}(f(\lambda))^2
=2(t+1)\sum_{\lambda\in\Gamma_{t+1,t+1}(K_n)}{\rm lk}(f(\lambda))^2 
\end{eqnarray}
in a similar way as (\ref{l12t1tl2}). 
By combining (\ref{l12ttl12}), (\ref{l12432}), (\ref{l12332}) and (\ref{l11t1t12}) with (\ref{l11kn-1s}), we have
\begin{eqnarray}\label{l11kn}
&&2(t+1)\sum_{\lambda\in\Gamma_{t+1,t+1}(K_n)}{\rm lk}(f(\lambda))^2+(t+2)\sum_{\lambda\in\Gamma_{t,t+2}(K_n)}{\rm lk}(f(\lambda))^2\\
&=&4a\sum_{\lambda\in\Gamma_{3,4}(K_n)}{\rm lk}(f(\lambda))^2+a(2t-5)(n-6)\sum_{\lambda\in\Gamma_{3,3}(K_n)}{\rm lk}(f(\lambda))^2.\nonumber
\end{eqnarray}
Then by (\ref{l11kn}) and Lemma \ref{kn3433}, we have (\ref{L11}).

Finally, assume that $s=0$. By (\ref{31assume}) we have 
\begin{eqnarray}\label{l10fijm}
&&\sum_{\substack{\lambda\in\Gamma_{t+1,t}(F_{ij}^{(m)})\\ \gamma\in\Gamma_{t+1}(F_{ij}^{(m)}),\ \gamma'\in\Gamma_t(F_{ij}^{(m)})\\ \overline{imj}\subset\gamma}}{\rm lk}(f(\lambda))^2+\sum_{\substack{\lambda\in\Gamma_{t,t}(K_{n-1}^{(m)})\\ \overline{ij}\not\subset\lambda}}{\rm lk}(f(\lambda))^2\\
&=&a\Bigg( \sum_{\substack{\lambda\in\Gamma_{3,4}(F_{ij}^{(m)})\\ \gamma\in\Gamma_4(F_{ij}^{(m)}),\ \gamma'\in\Gamma_3(F_{ij}^{(m)})\\ \overline{imj}\subset\gamma}}{\rm lk}(f(\lambda))^2+\sum_{\substack{\lambda\in\Gamma_{3,3}(K_{n-1}^{(m)})\\ \overline{ij}\not\subset\lambda}}{\rm lk}(f(\lambda))^2\Bigg).\nonumber
\end{eqnarray}
Let us take the sum of both side of (\ref{l10fijm}) over $1\leq i<j\leq n$ and $i, j\neq m$. Note that (\ref{l11t1tl}) and (\ref{l11ttl}) also hold if $s=0$. By combining (\ref{l11t1tl}), (\ref{l1143}), (\ref{l11ttl}) and (\ref{l1133}) with (\ref{l10fijm}), we have
\begin{eqnarray}\label{l10kn-1}
&&\sum_{\substack{\lambda=\gamma\cup\gamma'\in\Gamma_{t+1,t}(K_n)\\ \gamma\in\Gamma_{t+1}(K_n),\ \gamma'\in\Gamma_t(K_n)\\ m\subset\gamma}}{\rm lk}(f(\lambda))^2+\left( \binom{n-1}{2}-2t \right)\sum_{\lambda\in\Gamma_{t,t}(K_{n-1}^{(m)})}{\rm lk}(f(\lambda))^2\\ 
&=&
a\sum_{\substack{\lambda=\gamma\cup\gamma'\in\Gamma_{3,4}(K_n)\\ \gamma\in\Gamma_4(K_n),\ \gamma'\in\Gamma_3(K_n)\\ m\subset\gamma}}{\rm lk}(f(\lambda))^2+a\left( \binom{n-1}{2}-6 \right)\sum_{\lambda\in\Gamma_{3,3}(K_{n-1}^{(m)})}{\rm lk}(f(\lambda))^2.\nonumber
\end{eqnarray}
Then for the embedding $f$ restricted to $K_{n-1}^{(m)}$, by the assumption we have
\begin{eqnarray}\label{l10kn-1m}
\sum_{\lambda\in\Gamma_{t,t}(K_{n-1}^{(m)})}{\rm lk}(f(\lambda))^2=a\sum_{\lambda\in\Gamma_{3,3}(K_{n-1}^{(m)})}{\rm lk}(f(\lambda))^2.
\end{eqnarray}
By combining (\ref{l10kn-1}) and (\ref{l10kn-1m}), we have
\begin{eqnarray}\label{l10kn-1s}
&&\sum_{\substack{\lambda=\gamma\cup\gamma'\in\Gamma_{t+1,t}(K_n)\\ \gamma\in\Gamma_{t+1}(K_n),\ \gamma'\in\Gamma_t(K_n)\\ m\subset\gamma}}{\rm lk}(f(\lambda))^2\\
&=&a\sum_{\substack{\lambda=\gamma\cup\gamma'\in\Gamma_{3,4}(K_n)\\ \gamma\in\Gamma_4(K_n),\ \gamma'\in\Gamma_3(K_n)\\ m\subset\gamma}}{\rm lk}(f(\lambda))^2+2a(t-3)\sum_{\lambda\in\Gamma_{3,3}(K_{n-1}^{(m)})}{\rm lk}(f(\lambda))^2.\nonumber
\end{eqnarray}
Now we take the sum of both sides of (\ref{l10kn-1s}) over $m=1,2,\cdots ,n$. 
Note that (\ref{l12t1tl2}) also holds if $s=0$. By combining (\ref{l12t1tl2}), (\ref{l12432}) and (\ref{l12332}) with (\ref{l10kn-1s}), we have 
\begin{eqnarray}\label{l10kn}
&&(t+1)\sum_{\lambda\in\Gamma_{t+1,t}(K_n)}{\rm lk}(f(\lambda))^2\\
&=&4a\sum_{\lambda\in\Gamma_{3,4}(K_n)}{\rm lk}(f(\lambda))^2+2a(t-3)(n-6)\sum_{\lambda\in\Gamma_{3,3}(K_n)}{\rm lk}(f(\lambda))^2.\nonumber
\end{eqnarray}
Then by (\ref{l10kn}) and Lemma \ref{kn3433}, we have (\ref{L10}). This completes the proof. 
\end{proof}

\begin{Lemma}\label{L2}
Let $n\ge 8$ be an integer and $t,s$ two integers satisfying $n-2=2t+s$, where $t\ge 3$ and $s\ge 0$. 
Assume that there exist a constant $a$ such that 
\begin{eqnarray}\label{32assume}
\sum_{\lambda\in\Gamma_{t,t+s}(K_{n-2})}{\rm lk}(g(\lambda))^2=a\sum_{\lambda\in\Gamma_{3,3}(K_{n-2})}{\rm lk}(g(\lambda))^2 
\end{eqnarray}
for any spatial embedding $g$ of $K_{n-2}$. Then for any spatial embedding $f$ of $K_n$, we have 
\begin{eqnarray}\label{L20}
\sum_{\lambda\in\Gamma_{t,t+2}(K_n)}{\rm lk}(f(\lambda))^2
=2a(n-6)(n-7)\sum_{\lambda\in\Gamma_{3,3}(K_n)}{\rm lk}(f(\lambda))^2
\end{eqnarray}
if $s=0$, 
\begin{eqnarray}\label{L22}
&&2(t+2)\sum_{\lambda\in\Gamma_{t+2,t+2}(K_n)}{\rm lk}(f(\lambda))^2
+(t+4)\sum_{\lambda\in\Gamma_{t,t+4}(K_n)}{\rm lk}(f(\lambda))^2\\
&=&2a(n-6)(n-7)(t+3)\sum_{\lambda\in\Gamma_{3,3}(K_n)}{\rm lk}(f(\lambda))^2\nonumber
\end{eqnarray}
if $s=2$ and 
\begin{eqnarray}\label{L23}
&&(t+2)\sum_{\lambda\in\Gamma_{t+2,t+s}(K_n)}{\rm lk}(f(\lambda))^2+(t+s+2)\sum_{\lambda\in\Gamma_{t,t+s+2}(K_n)}{\rm lk}(f(\lambda))^2\\
&=&a(n-6)(n-7)(2t+s+4)\sum_{\lambda\in\Gamma_{3,3}(K_n)}{\rm lk}(f(\lambda))^2\nonumber
\end{eqnarray}
if $s=1$ or $s\ge 3$. 
\end{Lemma}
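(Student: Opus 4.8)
The plan is to mimic the proof of Lemma~\ref{L1}, replacing the one-vertex subdivisions $F_{ij}^{(m)}$ and $K_{n-1}^{(m)}$ by the two-vertex subdivisions $F_{ij}^{(kl)}$, $F_{ij}^{(lk)}$ and $K_{n-2}^{(kl)}$ --- the same passage from ``one subdivided vertex'' to ``two subdivided vertices'' by which Lemma~\ref{413} was obtained. Since each of $F_{ij}^{(kl)}$ and $F_{ij}^{(lk)}$ is homeomorphic to $K_{n-2}$, the first step is to write out the hypothesis (\ref{32assume}) for the restriction of $f$ to each of them. A cycle of $F_{ij}^{(kl)}$ either avoids $\{k,l\}$, in which case it is a cycle of $K_{n-2}^{(kl)}$ of the same length not containing $\overline{ij}$, or it runs through the path $\overline{iklj}$, in which case it corresponds to a cycle of $K_{n-2}^{(kl)}$ containing $\overline{ij}$ of length smaller by $2$. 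Hence a constituent link of type $(t,t+s)$ of $K_{n-2}^{(kl)}$, regarded inside $F_{ij}^{(kl)}$, becomes one of type $(t,t+s)$ when it avoids $\overline{ij}$, of type $(t+2,t+s)$ when its $t$-cycle carries $\overline{ij}$, and of type $(t,t+s+2)$ when its $(t+s)$-cycle carries $\overline{ij}$; similarly a constituent link of type $(3,3)$ stays of type $(3,3)$ when it avoids $\overline{ij}$ and becomes one of type $(3,5)$ when one of its triangles carries $\overline{ij}$. Decomposing both sides of (\ref{32assume}) for $F_{ij}^{(kl)}$ and $F_{ij}^{(lk)}$ in this way yields, for each $i<j$ with $i,j\neq k,l$, an identity among sums of ${\rm lk}(f(\lambda))^2$ over these constituent links, exactly parallel to (\ref{l11fijm}) (and to (\ref{f78})).

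Next I would sum these identities over all $i<j$ with $i,j\neq k,l$ and over the two graphs $F_{ij}^{(kl)}$, $F_{ij}^{(lk)}$. The bookkeeping here is that a constituent link $\gamma\cup\gamma'$ of $K_n$ one of whose cycles contains $\overline{kl}$ arises from exactly one pair $\{i,j\}$ --- the two vertices of that cycle adjacent to $\overline{kl}$ --- and from exactly one of $F_{ij}^{(kl)}$, $F_{ij}^{(lk)}$, whereas a constituent link of $K_{n-2}^{(kl)}$ of type $(t,t+s)$ (resp.\ $(3,3)$) is counted $2\bigl(\binom{n-2}{2}-(2t+s)\bigr)$ (resp.\ $2\bigl(\binom{n-2}{2}-6\bigr)$) times. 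After also using (\ref{32assume}) for the restriction of $f$ to $K_{n-2}^{(kl)}$ itself --- as (\ref{l11kn-1m}) is used in Lemma~\ref{L1} --- to combine the two families of $K_{n-2}^{(kl)}$-terms, one obtains a ``per-pair-$(k,l)$'' identity of the shape
\begin{eqnarray*}
&&\sum_{\substack{\lambda=\gamma\cup\gamma'\in\Gamma_{t+2,t+s}(K_n)\\ \gamma\in\Gamma_{t+2}(K_n),\ \gamma'\in\Gamma_{t+s}(K_n)\\ \overline{kl}\subset\gamma}}{\rm lk}(f(\lambda))^2
+\sum_{\substack{\lambda=\gamma\cup\gamma'\in\Gamma_{t,t+s+2}(K_n)\\ \gamma\in\Gamma_t(K_n),\ \gamma'\in\Gamma_{t+s+2}(K_n)\\ \overline{kl}\subset\gamma'}}{\rm lk}(f(\lambda))^2\\
&=&a\sum_{\substack{\lambda=\gamma\cup\gamma'\in\Gamma_{3,5}(K_n)\\ \gamma\in\Gamma_5(K_n),\ \gamma'\in\Gamma_3(K_n)\\ \overline{kl}\subset\gamma}}{\rm lk}(f(\lambda))^2
+2a(2t+s-6)\sum_{\lambda\in\Gamma_{3,3}(K_{n-2}^{(kl)})}{\rm lk}(f(\lambda))^2,
\end{eqnarray*}
the analogue of (\ref{l11kn-1s}). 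When $s=0$ the two sums on the left coincide and collapse to one, while when $s=2$ the first sum runs over the diagonal family $\Gamma_{t+2,t+2}(K_n)$; this is why the three cases ($s=0$; $s=2$; $s=1$ or $s\ge 3$) must be treated separately, just as the cases $s=0$, $s=1$, $s\ge 2$ were in Lemma~\ref{L1}.

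Finally I would sum the per-pair identity over all $1\le k<l\le n$. A constituent link of $K_n$ is counted once for each edge of the cycle prescribed to contain $\overline{kl}$: this yields the coefficients $t+2$ and $t+s+2$ in the generic case, $2(t+2)$ and $t+4$ in the diagonal case $s=2$, a single coefficient $t+2$ when $s=0$, and the coefficient $5$ on the $\Gamma_{3,5}(K_n)$-term; meanwhile each constituent link of type $(3,3)$ of $K_n$ lies in exactly $\binom{n-6}{2}$ of the subgraphs $K_{n-2}^{(kl)}$. This produces an identity among $\sum_{\lambda\in\Gamma_{t+2,t+s}(K_n)}$, $\sum_{\lambda\in\Gamma_{t,t+s+2}(K_n)}$, $\sum_{\lambda\in\Gamma_{3,5}(K_n)}$ and $\sum_{\lambda\in\Gamma_{3,3}(K_n)}$ of ${\rm lk}(f(\lambda))^2$; substituting Lemma~\ref{LL2} to eliminate $\sum_{\lambda\in\Gamma_{3,5}(K_n)}$ and using $\binom{n-6}{2}=\frac{1}{2}(n-6)(n-7)$ then gives (\ref{L20}), (\ref{L22}) and (\ref{L23}) respectively. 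The main obstacle is precisely this double summation --- first over $\{i,j\}$, then over $\{k,l\}$ --- where one must keep track, uniformly in $s$ but with the three exceptional multiplicity patterns above, of exactly which constituent links of $K_n$ occur and with what multiplicity.
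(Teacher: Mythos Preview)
Your proposal is correct and follows essentially the same route as the paper's proof: apply the hypothesis (\ref{32assume}) to $f$ restricted to each $F_{ij}^{(kl)}$ and $F_{ij}^{(lk)}$, sum over $i<j$ to obtain the per-pair-$(k,l)$ identity (your displayed formula is exactly the paper's (\ref{l21kn-2s})), then sum over $1\le k<l\le n$ and invoke Lemma~\ref{LL2}. Your case split $s=0$, $s=2$, and $s\in\{1\}\cup\{s\ge 3\}$, together with the edge-multiplicity coefficients $t+2$, $t+s+2$, $2(t+2)$, and $5$, matches the paper precisely; the only point to phrase more carefully is that for $s=0$ the left-hand side of the per-$F_{ij}^{(kl)}$ identity has a \emph{single} subdivided term from the outset (as in the paper's (\ref{l20fijpq})), rather than two equal terms that merge.
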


\begin{proof}
Let $f$ be a spatial embedding of $K_n$. First we assume that $s\ge 1$. Then for the embedding $f$ restricted to $F_{ij}^{(kl)}$ $(1\le i<j\le n,\ i,j\neq k,l)$, by (\ref{32assume}) we have 
\begin{eqnarray}\label{l21fijpq}
&&\sum_{\substack{\lambda=\gamma\cup\gamma'\in\Gamma_{t+2,t+s}(F_{ij}^{(kl)})\\ \gamma\in\Gamma_{t+2}(F_{ij}^{(kl)}),\ \gamma'\in\Gamma_{t+s}(F_{ij}^{(kl)}) \\ \overline{iklj}\subset\gamma}}{\rm lk}(f(\lambda))^2
+\sum_{\substack{\lambda=\gamma\cup\gamma'\in\Gamma_{t,t+s+2}(F_{ij}^{(kl)})\\ \gamma\in\Gamma_{t}(F_{ij}^{(kl)}),\ \gamma'\in\Gamma_{t+s+2}(F_{ij}^{(kl)}) \\ \overline{iklj}\subset\gamma'}}{\rm lk}(f(\lambda))^2\\
&&+\sum_{\substack{\lambda\in\Gamma_{t,t+s}(F_{ij}^{(kl)})\\ \overline{iklj}\not\subset\lambda}}{\rm lk}(f(\lambda))^2\nonumber\\
&=&a\Bigg( \sum_{\substack{\lambda=\gamma\cup\gamma'\in\Gamma_{3,5}(F_{ij}^{(kl)})\\ \gamma\in\Gamma_5(F_{ij}^{(kl)}),\ \gamma'\in\Gamma_3(F_{ij}^{(kl)})\\ \overline{iklj}\subset\gamma}}{\rm lk}(f(\lambda))^2
+\sum_{\substack{\lambda\in\Gamma_{3,3}(F_{ij}^{(kl)})\\ \overline{iklj}\not\subset\lambda}}{\rm lk}(f(\lambda))^2\Bigg).\nonumber
\end{eqnarray}
Note that for the embedding $f$ restricted to $F_{ij}^{(lk)}$, we also have a similar formula as (\ref{l21fijpq}). 
Let us take the sum of both sides of (\ref{l21fijpq}) over $1\leq i<j\leq n$ and $i, j\neq k, l$. 
For a pair of disjoint cycles $\lambda$ of $K_n$ consisting of a 
$(t+2)$-cycle $\gamma$ which contains the edge $\overline{kl}$ and a 
$(t+s)$-cycle $\gamma'$, let $i$ and $j$ be two distinct vertices of 
$K_n$ which are adjacent to $\overline{kl}$ in $\gamma$. Then $\lambda$ 
is a pair of disjoint cycles of $F_{ij}^{(kl)}$ or $F_{ij}^{(lk)}$ 
consisting of a $(t+2)$-cycle $\gamma$ which contains $\overline{iklj}$ 
or $\overline{ilkj}$ and a $(t+s)$-cycle $\gamma'$. This implies that 
\begin{eqnarray}\label{l21t2tl}
&&\sum_{\substack{1\leq i<j\leq n\\ i,j\neq k,l}}\Bigg(
\sum_{\substack{\lambda=\gamma\cup\gamma'\in\Gamma_{t+2,t+s}(F_{ij}^{(kl)})\\ \gamma\in\Gamma_{t+2}(F_{ij}^{(kl)}),\ \gamma'\in\Gamma_{t+s}(F_{ij}^{(kl)})\\ \overline{iklj}\subset\gamma}}\!\!\!\!\!\!\!\!\!\!\!\!\!\!\! {\rm lk}(f(\lambda))^2
+\sum_{\substack{\lambda=\gamma\cup\gamma'\in\Gamma_{t+2,t+s}(F_{ij}^{(lk)})\\ \gamma\in\Gamma_{t+2}(F_{ij}^{(lk)}),\ \gamma'\in\Gamma_{t+s}(F_{ij}^{(lk)})\\ \overline{ilkj}\subset\gamma}}\!\!\!\!\!\!\!\!\!\!\!\!\!\!\! {\rm lk}(f(\lambda))^2
\Bigg)\\
&=&
\sum_{\substack{\lambda=\gamma\cup\gamma'\in\Gamma_{t+2,t+s}(K_n)\\ \gamma\in\Gamma_{t+2}(K_n),\ \gamma'\in\Gamma_{t+s}(K_n)\\ \overline{kl}\subset\gamma}}\!\!\!\!\!\!\!\!\!\! {\rm lk}(f(\lambda))^2.\nonumber
\end{eqnarray}
In the same way as (\ref{l21t2tl}), we also have 
%
\begin{eqnarray}\label{l21ttl2}
&&\sum_{\substack{1\leq i<j\leq n\\ i,j\neq k,l}}\Bigg(
\sum_{\substack{\lambda=\gamma\cup\gamma'\in\Gamma_{t,t+s+2}(F_{ij}^{(kl)})\\ \gamma\in\Gamma_t(F_{ij}^{(kl)}),\ \gamma'\in\Gamma_{t+s+2}(F_{ij}^{(kl)})\\ \overline{iklj}\subset\gamma'}}\!\!\!\!\!\!\!\!\!\!\!\!\!\!\! {\rm lk}(f(\lambda))^2
+\sum_{\substack{\lambda=\gamma\cup\gamma'\in\Gamma_{t,t+s+2}(F_{ij}^{(lk)})\\ \gamma\in\Gamma_t(F_{ij}^{(lk)}),\ \gamma'\in\Gamma_{t+s+2}(F_{ij}^{(lk)})\\ \overline{ilkj}\subset\gamma'}}\!\!\!\!\!\!\!\!\!\!\!\!\!\!\! {\rm lk}(f(\lambda))^2
\Bigg)\\
&=&
\sum_{\substack{\lambda=\gamma\cup\gamma'\in\Gamma_{t,t+s+2}(K_n)\\ \gamma\in\Gamma_t(K_n),\ \gamma'\in\Gamma_{t+s+2}(K_n)\\ \overline{kl}\subset\gamma'}}\!\!\!\!\!\!\!\!\!\! {\rm lk}(f(\lambda))^2,\nonumber
\end{eqnarray}
\begin{eqnarray}\label{l2153}
&&\sum_{\substack{1\leq i<j\leq n\\ i,j\neq k,l}}\Bigg(
\sum_{\substack{\lambda=\gamma\cup\gamma'\in\Gamma_{3,5}(F_{ij}^{(kl)})\\ \gamma\in\Gamma_5(F_{ij}^{(kl)}),\ \gamma'\in\Gamma_3(F_{ij}^{(kl)})\\ \overline{iklj}\subset\gamma}}\!\!\!\!\!\!\!\!\!\! {\rm lk}(f(\lambda))^2
+\sum_{\substack{\lambda=\gamma\cup\gamma'\in\Gamma_{3,5}(F_{ij}^{(lk)})\\ \gamma\in\Gamma_5(F_{ij}^{(lk)}),\ \gamma'\in\Gamma_3(F_{ij}^{(lk)})\\ \overline{ilkj}\subset\gamma}}\!\!\!\!\!\!\!\!\!\! {\rm lk}(f(\lambda))^2
\Bigg)\\
&=&
\sum_{\substack{\lambda=\gamma\cup\gamma'\in\Gamma_{3,5}(K_n)\\ \gamma\in\Gamma_5(K_n),\ \gamma'\in\Gamma_3(K_n)\\ \overline{kl}\subset\gamma}}\!\!\!\!\!\!\!\!\!\! {\rm lk}(f(\lambda))^2.\nonumber
\end{eqnarray}
%
For a pair of disjoint cycles $\lambda$ of $K_n$ consisting of a $t$-cycle and a $(t+s)$-cycle, let $\overline{ij}$ be an edge of $K_{n-2}^{(kl)}$ which is not contained in $\lambda$. Then $\lambda$ is a pair of disjoint cycles of $F_{ij}^{(kl)}$ (resp. $F_{ij}^{(lk)}$) which does not contain $\overline{iklj}$ (resp. $\overline{ilkj}$). Note that there are $\binom{n-2}{2}-(2t+s)$ ways to choose such a pair of $i$ and $j$. This implies that 
\begin{eqnarray}
&& \sum_{\substack{1\leq i<j\leq n\\ i,j\neq k,l}}
\sum_{\substack{\lambda\in\Gamma_{t,t+s}(F_{ij}^{(kl)})\\ \overline{iklj}\not\subset\lambda}}\!\!\!\!\! {\rm lk}(f(\lambda))^2
= \left( \binom{n-2}{2}-(2t+s) \right)\sum_{\lambda\in\Gamma_{t,t+s}(K_{n-2}^{(kl)})}\!\!\!\!\! {\rm lk}(f(\lambda))^2,\label{l21ttlpq}\\
&& \sum_{\substack{1\leq i<j\leq n\\ i,j\neq k,l}}
\sum_{\substack{\lambda\in\Gamma_{t,t+s}(F_{ij}^{(lk)})\\ \overline{ilkj}\not\subset\lambda}}\!\!\!\!\! {\rm lk}(f(\lambda))^2
= \left( \binom{n-2}{2}-(2t+s) \right)\sum_{\lambda\in\Gamma_{t,t+s}(K_{n-2}^{(kl)})}\!\!\!\!\! {\rm lk}(f(\lambda))^2.\label{l21ttlqp}
\end{eqnarray}

In the same way as (\ref{l21ttlpq}) and (\ref{l21ttlqp}), we also have 
\begin{eqnarray}
&& \sum_{\substack{1\leq i<j\leq n\\ i,j\neq k,l}}
\sum_{\substack{\lambda\in\Gamma_{3,3}(F_{ij}^{(kl)})\\ \overline{iklj}\not\subset\lambda}}{\rm lk}(f(\lambda))^2
= \left( \binom{n-2}{2}-6 \right)\sum_{\lambda\in\Gamma_{3,3}(K_{n-2}^{(kl)})}{\rm lk}(f(\lambda))^2,\label{l2133pq}\\
&& \sum_{\substack{1\leq i<j\leq n\\ i,j\neq k,l}}
\sum_{\substack{\lambda\in\Gamma_{3,3}(F_{ij}^{(lk)})\\ \overline{ilkj}\not\subset\lambda}}{\rm lk}(f(\lambda))^2
= \left( \binom{n-2}{2}-6 \right)\sum_{\lambda\in\Gamma_{3,3}(K_{n-2}^{(kl)})}{\rm lk}(f(\lambda))^2.\label{l2133qp}
\end{eqnarray}
By combining (\ref{l21t2tl}), (\ref{l21ttl2}), (\ref{l2153}), (\ref{l21ttlpq}), (\ref{l21ttlqp}), (\ref{l2133pq}) and (\ref{l2133qp}) with (\ref{l21fijpq}), we have 
\begin{eqnarray}\label{l21kn-2}
&&\sum_{\substack{\lambda=\gamma\cup\gamma'\in\Gamma_{t+2,t+s}(K_n)\\ \gamma\in\Gamma_{t+2}(K_n),\ \gamma'\in\Gamma_{t+s}(K_n)\\ \overline{kl}\subset\gamma}}{\rm lk}(f(\lambda))^2
+\sum_{\substack{\lambda=\gamma\cup\gamma'\in\Gamma_{t,t+s+2}(K_n)\\ \gamma\in\Gamma_t(K_n),\ \gamma'\in\Gamma_{t+s+2}(K_n)\\ \overline{kl}\subset\gamma'}}{\rm lk}(f(\lambda))^2\\
&&+2\left( \binom{n-2}{2}-(2t+s) \right)\sum_{\lambda\in\Gamma_{t,t+s}(K_{n-2}^{(kl)})}{\rm lk}(f(\lambda))^2\nonumber\\
&=&
a\Bigg( \sum_{\substack{\lambda=\gamma\cup\gamma'\in\Gamma_{3,5}(K_n)\\ \gamma\in\Gamma_5(K_n),\ \gamma'\in\Gamma_3(K_n)\\ \overline{kl}\subset\gamma}}{\rm lk}(f(\lambda))^2
+2\left( \binom{n-2}{2}-6 \right)\sum_{\lambda\in\Gamma_{3,3}(K_{n-2}^{(kl)})}{\rm lk}(f(\lambda))^2\Bigg).\nonumber
\end{eqnarray}
Then for the embedding $f$ restricted to $K_{n-2}^{(kl)}$, by the assumption we have 
\begin{eqnarray}\label{l21kn-2pq}
\sum_{\lambda\in\Gamma_{t,t+s}(K_{n-2}^{(kl)})}{\rm lk}(f(\lambda))^2=a\sum_{\lambda\in\Gamma_{3,3}(K_{n-2}^{(kl)})}{\rm lk}(f(\lambda))^2.
\end{eqnarray}
By combining (\ref{l21kn-2}) and (\ref{l21kn-2pq}), we have
\begin{eqnarray}\label{l21kn-2s}
&&\sum_{\substack{\lambda=\gamma\cup\gamma'\in\Gamma_{t+2,t+s}(K_n)\\ \gamma\in\Gamma_{t+2}(K_n),\ \gamma'\in\Gamma_{t+s}(K_n)\\ \overline{kl}\subset\gamma}}{\rm lk}(f(\lambda))^2
+\sum_{\substack{\lambda=\gamma\cup\gamma'\in\Gamma_{t,t+s+2}(K_n)\\ \gamma\in\Gamma_t(K_n),\ \gamma'\in\Gamma_{t+s+2}(K_n)\\ \overline{kl}\subset\gamma'}}{\rm lk}(f(\lambda))^2\\
&=&
a\sum_{\substack{\lambda=\gamma\cup\gamma'\in\Gamma_{3,5}(K_n)\\ \gamma\in\Gamma_5(K_n),\ \gamma'\in\Gamma_3(K_n)\\ \overline{kl}\subset\gamma}}{\rm lk}(f(\lambda))^2
+2a(2t+s-6)\sum_{\lambda\in\Gamma_{3,3}(K_{n-2}^{(kl)})}{\rm lk}(f(\lambda))^2.\nonumber
\end{eqnarray}

Now we assume that $s=1$ or $s\ge 3$. Let us take the sum of both sides of (\ref{l21kn-2s}) over $1\leq k<l\leq n$. 
For a pair of disjoint cycles $\lambda$ of $K_n$ consisting of a $(t+2)$-cycle $\gamma$ and a $(t+s)$-cycle $\gamma'$, let $\overline{kl}$ be an edge of $K_n$ which is contained in $\gamma$. Note that there are $t+2$ ways to choose such an edge $\overline{kl}$. This implies that
\begin{eqnarray}\label{l23t2tl2}
\sum_{1\leq k<l\leq n}
\sum_{\substack{\lambda=\gamma\cup\gamma'\in\Gamma_{t+2,t+s}(K_n)\\ \gamma\in\Gamma_{t+2}(K_n),\ \gamma'\in\Gamma_{t+s}(K_n)\\ \overline{kl}\subset\gamma}}\!\!\!\!\!\!\!\!\!\! {\rm lk}(f(\lambda))^2
=(t+2)\sum_{\lambda\in\Gamma_{t+2,t+s}(K_n)}\!\!\!\!\! {\rm lk}(f(\lambda))^2.
\end{eqnarray}
In the same way as (\ref{l23t2tl2}), we also have 
\begin{eqnarray}
&&\sum_{1\leq k<l\leq n}
\sum_{\substack{\lambda=\gamma\cup\gamma'\in\Gamma_{t,t+s+2}(K_n)\\ \gamma\in\Gamma_t(K_n),\ \gamma'\in\Gamma_{t+s+2}(K_n)\\ \overline{kl}\subset\gamma'}}\!\!\!\!\! {\rm lk}(f(\lambda))^2
=(t+s+2)\sum_{\lambda\in\Gamma_{t,t+s+2}(K_n)}\!\!\!\!\! {\rm lk}(f(\lambda))^2, \label{l23ttl22}\\
&&\sum_{1\leq k<l\leq n}
\sum_{\substack{\lambda=\gamma\cup\gamma'\in\Gamma_{3,5}(K_n)\\ \gamma\in\Gamma_5(K_n),\ \gamma'\in\Gamma_3(K_n)\\ \overline{kl}\subset\gamma}}\!\!\!\!\! {\rm lk}(f(\lambda))^2=5\sum_{\lambda\in\Gamma_{3,5}(K_n)}\!\!\!\!\! {\rm lk}(f(\lambda))^2.\label{l23532}
\end{eqnarray}
For a pair of two disjoint $3$-cycles $\lambda$ of $K_n$, let $k,l$ be two distinct vertices of $K_n$ which are not contained in $\lambda$. Then $\lambda$ is a pair of two disjoint $3$-cycles of $K_{n-2}^{(kl)}$. Note that there are $\binom{n-6}{2}$ ways to choose such two vertices $k,l$. This implies that
\begin{eqnarray}\label{l23332}
\sum_{1\leq k<l\leq n}
\sum_{\lambda\in\Gamma_{3,3}(K_{n-2}^{(kl)})}{\rm lk}(f(\lambda))^2=\binom{n-6}{2}\sum_{\lambda\in\Gamma_{3,3}(K_n)}{\rm lk}(f(\lambda))^2.
\end{eqnarray}
By combining (\ref{l23t2tl2}), (\ref{l23ttl22}), (\ref{l23532}) and (\ref{l23332}) with (\ref{l21kn-2s}), we have
\begin{eqnarray}\label{l23kn}
&&(t+2)\sum_{\lambda\in\Gamma_{t+2,t+s}(K_n)}{\rm lk}(f(\lambda))^2
+(t+s+2)\sum_{\lambda\in\Gamma_{t,t+s+2}(K_n)}{\rm lk}(f(\lambda))^2\\
&=&
5a\sum_{\lambda\in\Gamma_{3,5}(K_n)}{\rm lk}(f(\lambda))^2
+2a(2t+s-6)\binom{n-6}{2}\sum_{\lambda\in\Gamma_{3,3}(K_n)}{\rm lk}(f(\lambda))^2.\nonumber
\end{eqnarray}
Then by (\ref{l23kn}) and Lemma \ref{LL2}, we have (\ref{L23}).

Next, assume that $s=2$. Then by (\ref{l21kn-2s}), we have
\begin{eqnarray}\label{l22kn-2}
&&\sum_{\substack{\lambda\in\Gamma_{t+2,t+2}(K_n)\\ \overline{kl}\subset\lambda}}{\rm lk}(f(\lambda))^2
+\sum_{\substack{\lambda=\gamma\cup\gamma'\in\Gamma_{t,t+4}(K_n)\\ \gamma\in\Gamma_t(K_n),\ \gamma'\in\Gamma_{t+4}(K_n)\\ \overline{kl}\subset\gamma'}}{\rm lk}(f(\lambda))^2\\
&=&
a\sum_{\substack{\lambda=\gamma\cup\gamma'\in\Gamma_{3,5}(K_n)\\ \gamma\in\Gamma_5(K_n),\ \gamma'\in\Gamma_3(K_n)\\ \overline{kl}\subset\gamma}}{\rm lk}(f(\lambda))^2
+2a(2t-4)\sum_{\lambda\in\Gamma_{3,3}(K_{n-2}^{(kl)})}{\rm lk}(f(\lambda))^2.\nonumber
\end{eqnarray}
Let us take the sum of both sides of (\ref{l22kn-2}) over $1\leq k<l\leq n$. For a pair of disjoint $(t+2)$-cycles $\lambda$ of $K_n$, let $\overline{kl}$ be an edge of $K_n$ which is contained in $\lambda$. Note that there are $2(t+2)$ ways to choose such an edge $\overline{kl}$. This implies that
\begin{eqnarray}\label{l22t2t22}
\sum_{1\leq k<l\leq n}
\sum_{\substack{\lambda\in\Gamma_{t+2,t+2}(K_n)\\ \overline{kl}\subset\lambda}}{\rm lk}(f(\lambda))^2
=2(t+2)\sum_{\lambda\in\Gamma_{t+2,t+2}(K_n)}{\rm lk}(f(\lambda))^2.
\end{eqnarray}
By combining (\ref{l23ttl22}), (\ref{l23532}), (\ref{l23332}) and (\ref{l22t2t22}) with (\ref{l22kn-2}), we have
\begin{eqnarray}\label{l22kn}
&&2(t+2)\sum_{\lambda\in\Gamma_{t+2,t+2}(K_n)}{\rm lk}(f(\lambda))^2
+(t+4)\sum_{\lambda\in\Gamma_{t,t+4}(K_n)}{\rm lk}(f(\lambda))^2\\
&=&
5a\sum_{\lambda\in\Gamma_{3,5}(K_n)}{\rm lk}(f(\lambda))^2
+4a(t-2)\binom{n-6}{2}\sum_{\lambda\in\Gamma_{3,3}(K_n)}{\rm lk}(f(\lambda))^2.\nonumber
\end{eqnarray}
Then by (\ref{l22kn}) and Lemma \ref{LL2}, we have (\ref{L22}).

Finally, assume that $s=0$. By (\ref{32assume}) we have 
\begin{eqnarray}\label{l20fijpq}
&&\sum_{\substack{\lambda=\gamma\cup\gamma'\in\Gamma_{t+2,t}(F_{ij}^{(kl)})\\ \gamma\in\Gamma_{t+2}(F_{ij}^{(kl)}),\ \gamma'\in\Gamma_t(F_{ij}^{(kl)}) \\ \overline{iklj}\subset\gamma}}{\rm lk}(f(\lambda))^2
+\sum_{\substack{\lambda\in\Gamma_{t,t}(F_{ij}^{(kl)})\\ \overline{iklj}\not\subset\lambda}}{\rm lk}(f(\lambda))^2\\
&=&a\Bigg( \sum_{\substack{\lambda=\gamma\cup\gamma'\in\Gamma_{3,5}(F_{ij}^{(kl)})\\ \gamma\in\Gamma_5(F_{ij}^{(kl)}),\ \gamma'\in\Gamma_3(F_{ij}^{(kl)})\\ \overline{iklj}\subset\gamma}}{\rm lk}(f(\lambda))^2
+\sum_{\substack{\lambda\in\Gamma_{3,3}(F_{ij}^{(kl)})\\ \overline{iklj}\not\subset\lambda}}{\rm lk}(f(\lambda))^2\Bigg). \nonumber
\end{eqnarray}
Note that, for the embedding $f$ restricted to $F_{ij}^{(lk)}$, we have a similar formula as (\ref{l20fijpq}). 
%
%
Let us take the sum of both side of (\ref{l20fijpq}) over $1\leq i<j\leq n$ and $i, j\neq k, l$. 
By combining (\ref{l21t2tl}), (\ref{l2153}), (\ref{l21ttlpq}), (\ref{l21ttlqp}), (\ref{l2133pq}) and (\ref{l2133qp}) with (\ref{l20fijpq}), we have
\begin{eqnarray}\label{l20kn-2}
&&\sum_{\substack{\lambda=\gamma\cup\gamma'\in\Gamma_{t+2,t}(K_n)\\ \gamma\in\Gamma_{t+2}(K_n),\ \gamma'\in\Gamma_t(K_n)\\ \overline{kl}\subset\gamma}}{\rm lk}(f(\lambda))^2
+2\left( \binom{n-2}{2}-2t \right)\sum_{\lambda\in\Gamma_{t,t}(K_{n-2}^{(kl)})}{\rm lk}(f(\lambda))^2\\
&=&
a\sum_{\substack{\lambda=\gamma\cup\gamma'\in\Gamma_{3,5}(K_n)\\ \gamma\in\Gamma_5(K_n),\ \gamma'\in\Gamma_3(K_n)\\ \overline{kl}\subset\gamma}}{\rm lk}(f(\lambda))^2
+2a\left( \binom{n-2}{2}-6 \right)\sum_{\lambda\in\Gamma_{3,3}(K_{n-2}^{(kl)})}{\rm lk}(f(\lambda))^2.\nonumber
\end{eqnarray}
Then for the embedding $f$ restricted to $K_{n-2}^{(kl)}$, by the assumption we have 
\begin{eqnarray}\label{l20kn-2pq}
\sum_{\lambda\in\Gamma_{t,t}(K_{n-2}^{(kl)})}{\rm lk}(\lambda))^2=a\sum_{\lambda\in\Gamma_{3,3}(K_{n-2}^{(kl)})}{\rm lk}(f(\lambda))^2.
\end{eqnarray}
By combining (\ref{l20kn-2}) and (\ref{l20kn-2pq}), we have
\begin{eqnarray}\label{l20kn-2s}
&&\sum_{\substack{\lambda=\gamma\cup\gamma'\in\Gamma_{t+2,t}(K_n)\\ \gamma\in\Gamma_{t+2}(K_n),\ \gamma'\in\Gamma_t(K_n)\\ \overline{kl}\subset\gamma}}{\rm lk}(f(\lambda))^2\\
&=&
a\sum_{\substack{\lambda=\gamma\cup\gamma'\in\Gamma_{3,5}(K_n)\\ \gamma\in\Gamma_5(K_n),\ \gamma'\in\Gamma_3(K_n)\\ \overline{kl}\subset\gamma}}{\rm lk}(f(\lambda))^2
+2a(2t-6)\sum_{\lambda\in\Gamma_{3,3}(K_{n-2}^{(kl)})}{\rm lk}(f(\lambda))^2.\nonumber
\end{eqnarray}
Now we take the sum of both sides of (\ref{l20kn-2s}) over $1\leq k<l\leq n$. By combining (\ref{l23t2tl2}), (\ref{l23532}) and (\ref{l23332}) with (\ref{l20kn-2s}), we have
\begin{eqnarray}\label{l20kn}
&&(t+2)\sum_{\lambda\in\Gamma_{t+2,t}(K_n)}{\rm lk}(f(\lambda))^2\\
&=&5a\sum_{\lambda\in\Gamma_{3,5}(K_n)}{\rm lk}(f(\lambda))^2
+4a(t-3)\binom{n-6}{2}\sum_{\lambda\in\Gamma_{3,3}(K_n)}{\rm lk}(f(\lambda))^2.\nonumber
\end{eqnarray}
Then by (\ref{l20kn}) and Lemma \ref{LL2}, we have (\ref{L20}).
\end{proof}

\begin{proof}[Proof of Theorem \ref{lkrefine}]
First we show in the case of $p=q=n/2$ by induction on $p$. If $p=4$, then by Theorem \ref{4433}, we have the result. 
Assume that $p\ge 5$ and it holds that
\begin{eqnarray}\label{ka1}
\sum_{\lambda\in\Gamma_{p-1,p-1}(K_{2p-2})}{\rm lk}(g(\lambda))^2=(2p-8)!\sum_{\lambda\in\Gamma_{3,3}(K_{2p-2})}{\rm lk}(g(\lambda))^2 
\end{eqnarray}
for any spatial embedding $g$ of $K_{2p-2}$. 
Then by (\ref{ka1}) and setting $n=2p-1$, $a=(2p-8)!$, $t=p-1$ and $s=0$ in Lemma \ref{L1}, we have 
\begin{eqnarray}\label{ka11}
\sum_{\lambda\in\Gamma_{p-1,p}(K_{2p-1})}{\rm lk}(h(\lambda))^2=2\cdot{(2p-7)!}\sum_{\lambda\in\Gamma_{3,3}(K_{2p-1})}{\rm lk}(h(\lambda))^2 
\end{eqnarray}
for any spatial embedding $h$ of $K_{2p-1}$. Then by (\ref{ka11}) and setting $n=2p$, $a=2\cdot (2p-7)!$, $t=p-1$ and $s=1$ in Lemma \ref{L1}, we have 
\begin{eqnarray}\label{ka12}
&&2p\sum_{\lambda\in\Gamma_{p,p}(K_{2p})}{\rm lk}(f(\lambda))^2+(p+1)\sum_{\lambda\in\Gamma_{p-1,p+1}(K_{2p})}{\rm lk}(f(\lambda))^2\\
&=&
2\cdot{(2p-6)!}\cdot (2p+1)\sum_{\lambda\in\Gamma_{3,3}(K_{2p})}{\rm lk}(f(\lambda))^2\nonumber
\end{eqnarray}
for any spatial embedding $f$ of $K_{2p}$. On the other hand, by (\ref{ka1}) and setting $n=2p$, $a=(2p-8)!$, $t=p-1$ and $s=0$ in Lemma \ref{L2}, we also have 
\begin{eqnarray}\label{ka13}
\sum_{\lambda\in\Gamma_{p-1,p+1}(K_{2p})}{\rm lk}(f(\lambda))^2=2\cdot{(2p-6)!}\sum_{\lambda\in\Gamma_{3,3}(K_{2p})}{\rm lk}(f(\lambda))^2.
\end{eqnarray}
Then by (\ref{ka12}) and (\ref{ka13}), we have the result. 

Next, we show in the case of $n=p+q\ (p<q)$ by the induction on $r=q-p\ge 1$. 
In the case of $r=1$, as we showed in the first half of this proof, we have 
\begin{eqnarray}\label{2p}
\sum_{\lambda\in\Gamma_{p,p}(K_{2p})}{\rm lk}(g(\lambda))^2=(2p-6)!\sum_{\lambda\in\Gamma_{3,3}(K_{2p})}{\rm lk}(g(\lambda))^2
\end{eqnarray}
for any spatial embedding $g$ of $K_{2p}$. Then by (\ref{2p}) and setting $n=2p+1$, $a=(2p-6)!$, $t=p$ and $s=0$ in Lemma \ref{L1}, we have 
\begin{eqnarray*}
\sum_{\lambda\in\Gamma_{p,p+1}(K_{2p+1})}{\rm lk}(f(\lambda))^2=2\cdot{(2p-5)!}\sum_{\lambda\in\Gamma_{3,3}(K_{2p+1})}{\rm lk}(f(\lambda))^2
\end{eqnarray*}
for any spatial embedding $f$ of $K_{2p+1}$. Thus we have the result. 

In the case of $r=2$, by (\ref{2p}) and setting  $n=2p+2$, $a=(2p-6)!$, $t=p$ and $s=0$ in Lemma \ref{L2}, we have 
\begin{eqnarray*}
\sum_{\lambda\in\Gamma_{p,p+2}(K_{2p+2})}{\rm lk}(f(\lambda))^2=2\cdot{(2p-4)!}\sum_{\lambda\in\Gamma_{3,3}(K_{2p+2})}{\rm lk}(f(\lambda))^2 
\end{eqnarray*}
for any spatial embedding $f$ of $K_{2p+1}$. Thus we have the result.

Assume that $r\ge3$ and it holds that 
\begin{eqnarray}\label{ka2}
\sum_{\lambda\in\Gamma_{p,p+(r-1)}(K_{2p+r-1})}{\rm lk}(g(\lambda))^2=2\cdot{(2p+r-7)!}\sum_{\lambda\in\Gamma_{3,3}(K_{2p+r-1})}{\rm lk}(g(\lambda))^2
\end{eqnarray}
for any spatial embedding $g$ of $K_{2p+r-1}$. Then by (\ref{ka2}) and setting  $n=2p+r$, $a=2\cdot{(2p+r-7)!}$, $t=p$ and $s=r-1\ge 2$ in Lemma \ref{L1}, we have 
\begin{eqnarray}\label{ka21}
&&(p+1)\sum_{\lambda\in\Gamma_{p+1,p+(r-1)}(K_{2p+r})}{\rm lk}(f(\lambda))^2+(p+r)\sum_{\lambda\in\Gamma_{p,p+r}(K_{2p+r})}{\rm lk}(f(\lambda))^2\\
&=&2\cdot{(2p+r-6)!}\cdot (2p+r+1)\sum_{\lambda\in\Gamma_{3,3}(K_{2p+r})}{\rm lk}(f(\lambda))^2.\nonumber
\end{eqnarray}
Here, by the induction hypothesis, we also have 
\begin{eqnarray}\label{ka22}
\sum_{\lambda\in\Gamma_{p+1,p+(r-1)}(K_{2p+r})}{\rm lk}(f(\lambda))^2
&=&
\sum_{\lambda\in\Gamma_{p+1,(p+1)+(r-2)}(K_{2p+r})}{\rm lk}(f(\lambda))^2\\
&=&2\cdot{(2p+r-6)!}\sum_{\lambda\in\Gamma_{3,3}(K_{2p+r})}{\rm lk}(f(\lambda))^2.\nonumber
\end{eqnarray}
By (\ref{ka21}) and (\ref{ka22}), we have 
\begin{eqnarray}\label{pqfin}
&&2(p+1)\cdot{(2p+r-6)!}\sum_{\lambda\in\Gamma_{3,3}(K_{2p+r})}\!\!\!\!\! {\rm lk}(f(\lambda))^2
+(p+r)\sum_{\lambda\in\Gamma_{p,p+r}(K_{2p+r})}\!\!\!\!\! {\rm lk}(f(\lambda))^2\\
&=&2\cdot{(2p+r-6)!}\cdot (2p+r+1)\sum_{\lambda\in\Gamma_{3,3}(K_{2p+r})}\!\!\!\!\! {\rm lk}(f(\lambda))^2.\nonumber
\end{eqnarray}
By (\ref{pqfin}), we have the result.

Finally we show (\ref{ilKK}).  
Assume that $n$ is odd. Then there are $(n-5)/2$ ways to choose a pair of two integers $p,q\ge 3$ with $n=p+q$. Then by (\ref{ilKK0}), we have 
\begin{eqnarray*}
\sum_{p+q=n}\sum_{\lambda\in\Gamma_{p,q}(K_n)}{\rm lk}(f(\lambda))^2 
&=&\frac{n-5}{2}\cdot 2\cdot{(n-6)!}\sum_{\lambda\in\Gamma_{3,3}(K_n)}{\rm lk}(f(\lambda))^2\\
&=&(n-5)!\sum_{\lambda\in\Gamma_{3,3}(K_n)}{\rm lk}(f(\lambda))^2.
\end{eqnarray*}
On the other hand, assume that $n$ is even. Then there are $(n-6)/2$ ways to choose a pair of two integers $p,q\ge 3$ with $p\neq q$ and $n=p+q$, and there is only one way to choose a pair of two integers $p,q$ with $p=q$ and $n=p+q$ (namely $p=q=n/2)$). Then by (\ref{ilKK0}), we have 
\begin{eqnarray*}
&&\sum_{p+q=n}\sum_{\lambda\in\Gamma_{p,q}(K_n)}{\rm lk}(f(\lambda))^2\\
&=&\sum_{\substack{p+q=n\\ p\neq q}}\sum_{\lambda\in\Gamma_{p,q}(K_n)}{\rm lk}(f(\lambda))^2
+\sum_{\lambda\in\Gamma_{\frac{n}{2},\frac{n}{2}}(K_n)}{\rm lk}(f(\lambda))^2\\
&=&\frac{n-6}{2}\cdot 2\cdot{(n-6)!}\sum_{\lambda\in\Gamma_{3,3}(K_n)}{\rm lk}(f(\lambda))^2+(n-6)!\sum_{\lambda\in\Gamma_{3,3}(K_n)}{\rm lk}(f(\lambda))^2\\
&=&(n-5)!\sum_{\lambda\in\Gamma_{3,3}(K_n)}{\rm lk}(f(\lambda))^2.
\end{eqnarray*}
This completes the proof. 
\end{proof}

\begin{proof}[Proof of Corollary \ref{maincor1}]
We show (1). For any two spatial embeddings $f$ and $g$ of $K_{n}$, by (\ref{ilKK0}), we have 
\begin{eqnarray}\label{co1}
&& \sum_{\lambda\in\Gamma_{p,p}(K_n)}{\rm lk}(f(\lambda))^2
- \sum_{\lambda\in\Gamma_{p,p}(K_n)}{\rm lk}(g(\lambda))^2 \\ 
&=& 
(n-6)!
\Big(\sum_{\lambda\in \Gamma_{3,3}\left(K_{n}\right)}{{\rm lk}\left(f(\lambda)\right)}^{2}
- \sum_{\lambda\in \Gamma_{3,3}\left(K_{n}\right)}{{\rm lk}\left(g(\lambda)\right)}^{2}
\Big). \nonumber
\end{eqnarray}
Since $\sum_{\lambda\in \Gamma_{3,3}\left(K_{n}\right)}{{\rm lk}\left(f(\lambda)\right)}^{2}$ and $\sum_{\lambda\in \Gamma_{3,3}\left(K_{n}\right)}{{\rm lk}\left(g(\lambda)\right)}^{2}$ have the same parity, that is also equal to the parity of $\binom{n}{6}$, by (\ref{co1}), we have 
\begin{eqnarray}\label{co2}
\sum_{\lambda\in\Gamma_{p,p}(K_n)}{\rm lk}(f(\lambda))^2
\equiv  \sum_{\lambda\in\Gamma_{p,p}(K_n)}{\rm lk}(g(\lambda))^2 
\pmod{2(n-6)!}. 
\end{eqnarray}
Note that there exists a spatial embedding $g$ of $K_{n}$ such that 
\begin{eqnarray}\label{co4}
\sum_{\lambda\in \Gamma_{3,3}\left(K_{n}\right)}{{\rm lk}\left(g(\lambda)\right)}^{2} = \binom{n}{6}, 
\end{eqnarray}
see Remark \ref{ineq_sharp}. Thus by (\ref{co2}) and (\ref{co4}), we have 
\begin{eqnarray*}\label{co5}
\sum_{\lambda\in \Gamma_{p,p}\left(K_{n}\right)}{{\rm lk}\left(f(\lambda)\right)}^{2} \equiv {(n-6)!}\binom{n}{6}\pmod{2(n-6)!}
\end{eqnarray*}
for any spatial embedding $f$ of $K_{n}$. Since $\binom{n}{6}$ is odd if and only if $n\equiv 6,7\pmod{8}$, we have the result. (2) and (3) can be shown in the same way as (1). 
\end{proof}

\begin{proof}[Proof of Corollary \ref{Cor9}]
Note that  no pair of two disjoint $3$-cycles $\lambda$ of $K_{n}$ is shared by two  distinct subgraphs of $K_{n}$ isomorphic to $K_{6}$. Then for any spatial embedding $f$ of $K_{n}$, Theorem \ref{CG} (1) implies that 
\begin{eqnarray}\label{lb}
\sum_{\lambda\in \Gamma_{3,3}\left(K_{n}\right)}{{\rm lk}\left(f(\lambda)\right)}^{2}\ge \binom{n}{6}. 
\end{eqnarray}
Thus by (\ref{lb}) and Theorem \ref{lkrefine}, we have the result. 
\end{proof}

\begin{Remark}\label{ineq_sharp}
As it was pointed out in \cite[Remark 2.5]{MN19}, the lower bound of (\ref{lb}) is realized by a {\it canonical book presentation} \cite{EO94} of $K_{n}$, which contains exactly $\binom{n}{6}$ Hopf links corresponding to all the pairs of two disjoint $3$-cycles of $K_{n}$ if $n\ge 6$ \cite{Otsuki96} (see also Example \ref{k8twoexs} in the case of $n=8$). Thus the lower bound of each of the inequalities in Corollary \ref{Cor9} is sharp. 
\end{Remark}

\begin{Example}\label{k7twoexs}
Let $g$ and $h$ be two spatial embeddings of $K_{7}$ as illustrated in Fig. \ref{K7link_examples}. The embedding $g$ was given in \cite{CG83} as an embedding that the image contains exactly one nontrivial knot, and the embedding $h$ is obtained from $g$ by a single crossing change at the crossing between $g(\overline{24})$ and $g(\overline{35})$. Then we can see that all of the nonsplittable constituent $2$-component links of type $(3,3)$ of $g(K_{7})$ are exactly $7$ Hopf links, and the ones of $h(K_{7})$ are exactly $9$ Hopf links. Thus by Theorem \ref{lkrefine} ((\ref{k73433})) we have 
\begin{eqnarray}
&& \sum_{\lambda\in \Gamma_{3,4}(K_{7})}{\rm lk}(g(\lambda))^{2} = 14, \label{k7ex14}\\
&& \sum_{\lambda\in \Gamma_{3,4}(K_{7})}{\rm lk}(h(\lambda))^{2} = 18.  \label{k7ex18}
\end{eqnarray}
Actually $g(K_{7})$ contains exactly $14$ Hopf links and $h(K_{7})$ 
contains exactly $18$ Hopf links as all of the nonsplittable constituent Hamiltonian $2$-component links. 
Note that for any spatial embedding $f$ of $K_{7}$, by Corollary \ref{maincor1} (2), we have 
\begin{eqnarray}\label{k7lkcongr}
\sum_{\lambda\in \Gamma_{3,4}(K_{7})}{\rm lk}(f(\lambda))^{2}
\equiv 2\pmod{4}. 
\end{eqnarray}
By (\ref{k7ex14}) and (\ref{k7ex18}), the congruence (\ref{k7lkcongr}) is the best possible. 
\end{Example}

\begin{figure}[htbp]
      \begin{center}
            \scalebox{0.525}{\includegraphics*{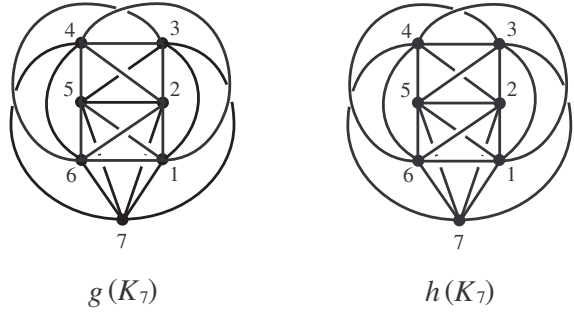}}
      \end{center}
   \caption{Two spatial graphs of $K_{7}$}
  \label{K7link_examples}
\end{figure} 

\begin{Example}\label{k8twoexs}
Let $g$ and $h$ be two spatial embeddings of $K_{8}$ as illustrated in Fig. \ref{K8link_examples}. The embedding $g$ is a canonical book presentation of $K_{8}$ (see Remark \ref{ineq_sharp}), and the embedding $h$ was given in \cite{FM09} as an embedding that the image contains only Hopf links as nonsplittable constituent $2$-component links. 
Then we can see that all of the nonsplittable constituent $2$-component links of type $(3,3)$ of $g(K_{8})$ are exactly $28$ Hopf links, and the ones of $h(K_{8})$ are exactly $30$ Hopf links. Thus by Theorem \ref{lkrefine} (Theorem \ref{lkthm}) we have 
\begin{eqnarray}
&& \sum_{\lambda\in \Gamma_{3,5}(K_{8})}{\rm lk}(g(\lambda))^{2} 
= 2\sum_{\lambda\in \Gamma_{4,4}(K_{8})}{\rm lk}(g(\lambda))^{2}
= 112, \label{k8ex28}\\
&& \sum_{\lambda\in \Gamma_{3,5}(K_{8})}{\rm lk}(h(\lambda))^{2} 
= 2\sum_{\lambda\in \Gamma_{4,4}(K_{8})}{\rm lk}(h(\lambda))^{2}
= 120. \label{k8ex30}
\end{eqnarray}
Actually $g(K_{8})$ contains exactly $112$ Hopf links of type $(3,5)$ and exactly $52$ Hopf links and one $(2,4)$-torus link $g([1357]\cup [2468])$ of type $(4,4)$ as all of the nonsplittable constituent Hamiltonian $2$-component links, where $[i_{1}i_{2}\cdots i_{p}]$ denotes a $p$-cycle $\overline{i_{1}i_{2}}\cup \overline{i_{2}i_{3}}\cup \cdots\cup \overline{i_{p}i_{1}}$, and $h(K_{8})$ contains exactly $120$ Hopf links of type $(3,5)$ and exactly $60$ Hopf links of type $(4,4)$ as all of the nonsplittable constituent Hamiltonian $2$-component links \cite{FM09}. Here we checked the number of nonsplittable links by a computer program {\it Gordian} \cite{gordian}. Note that for any spatial embedding $f$ of $K_{8}$, by Corollary \ref{maincor1} (1) and (2), we have 
\begin{eqnarray}
&& \sum_{\lambda\in \Gamma_{3,5}(K_{8})}{\rm lk}(f(\lambda))^{2}
\equiv 0\pmod{8},\label{k8lkcongr1} \\
&& \sum_{\lambda\in \Gamma_{4,4}(K_{8})}{\rm lk}(f(\lambda))^{2}
\equiv 0\pmod{4}. \label{k8lkcongr2}
\end{eqnarray}
By (\ref{k8ex28}) and (\ref{k8ex30}), both the congruences (\ref{k8lkcongr1}) and (\ref{k8lkcongr2}) are the best possible. 
\end{Example}

\begin{figure}[htbp]
      \begin{center}
      \scalebox{0.5}{\includegraphics*{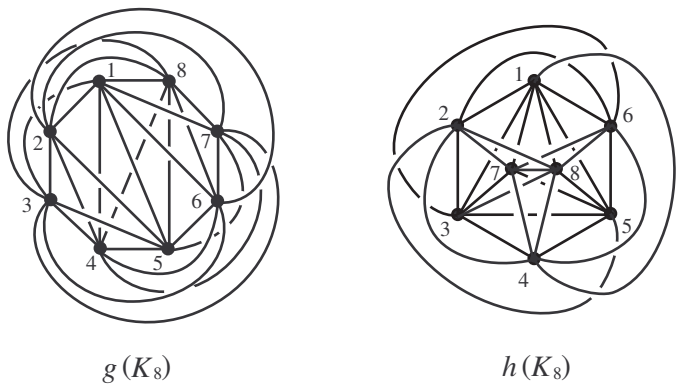}}
      \end{center}
   \caption{Two spatial graphs of $K_{8}$}
  \label{K8link_examples}
\end{figure} 

We strongly believe that all of the congruences in Corollary \ref{maincor1} are the best possible for each $n\ge 6$. To show it, it is sufficient to give the affirmative answer to the following question. 

\begin{Question}
For each integer $n\ge 6$, does there exist a spatial embedding $f$ of $K_{n}$ satisfying $\sum_{\lambda\in \Gamma_{3,3}(K_{n})}{\rm lk}(f(\lambda))^{2} = \binom{n}{6}+2$?  
\end{Question}

\begin{Remark}\label{minnumlk8}
The spatial graph $g(K_{8})$ in Example \ref{k8twoexs} also satisfies 
\begin{eqnarray}
\sum_{\lambda\in \Gamma_{3,4}(K_{8})}{\rm lk}(g(\lambda)) = 112 
\end{eqnarray}
by Lemma \ref{kn3433}, and actually $g(K_{8})$ contains exactly $112$ Hopf links as all of the nonsplittable constituent $2$-component links of type $(3,4)$. Therefore the number of nonsplittable constituent $2$-component links of $g(K_{8})$ is $28+112+53+112 = 305$. This gives an upper bound of the minimum number of nonsplittable constituent $2$-component links of a spatial graph of $K_{8}$ (see \cite[Theorem 2]{FM09}). On the other hand, if a spatial graph $f(K_{8})$ contains only $2$-component links with $\left|{\rm lk}\right|\le 1$ as the nonsplittable constituent $2$-component links, then it follows from  (\ref{lb}), Lemma \ref{kn3433} and Corollary \ref{Cor9} that the number of nonsplittable constituent $2$-component links of $f(K_{8})$ is greater than or equal to $28+112+56+112=308$. This implies that if a spatial graph of $K_{8}$ realizes the minimum number of the nonsplittable constituent $2$-component links then it must contain a $2$-component link with $\left|{\rm lk}\right|\ge 2$. 
\end{Remark}

\begin{proof}[Proof of Corollary \ref{maxlkcor}]
Note that the number of all elements in $\Gamma_{p,q}(K_{n})$ is  
\begin{eqnarray*}
\left\{
   \begin{array}{@{\,}lll}
   {\displaystyle \frac{1}{2}\binom{n}{p}\cdot\frac{(p-1)!}{2}\cdot\frac{(p-1)!}{2}
= \frac{n!}{8p^{2}}} & (p=q) \\
   {\displaystyle \binom{n}{p}\cdot\frac{(p-1)!}{2}\cdot\frac{(q-1)!}{2}
= \frac{n!}{4pq}}  & (p\neq q). 
   \end{array}
\right. 
\end{eqnarray*}
Then by Corollary \ref{Cor9}, if $p=q$ then we have 
\begin{eqnarray}\label{lkpp}
\Big(\mathop{\rm max}\limits_{\lambda\in \Gamma_{p,p}(K_{n})}
\left|{\rm lk}(f(\lambda))\right|\Big)^{2}
\cdot\frac{n!}{8p^{2}}
\ge \sum_{\lambda\in \Gamma_{p,p}(K_{n})}{\rm lk}(f(\lambda))^{2}
\ge \frac{n!}{6!}, 
\end{eqnarray}
and if $p\neq q$ then we have 
\begin{eqnarray}\label{lkpq}
\Big(\mathop{\rm max}\limits_{\lambda\in \Gamma_{p,q}(K_{n})}
\left|{\rm lk}(f(\lambda))\right|\Big)^{2}
\cdot \frac{n!}{4pq}
\ge \sum_{\lambda\in \Gamma_{p,q}(K_{n})}{\rm lk}(f(\lambda))^{2}
\ge 2\cdot \frac{n!}{6!}. 
\end{eqnarray}
Therefore by (\ref{lkpp}) and (\ref{lkpq}), we have 
\begin{eqnarray*}
\Big(\mathop{\rm max}\limits_{\lambda\in \Gamma_{p,q}(K_{n})}
\left|{\rm lk}(f(\lambda))\right|\Big)^{2}
\ge \frac{pq}{90}. 
\end{eqnarray*}
Thus we have the result. 
\end{proof}

\begin{proof}[Proof of Corollary \ref{lkmaxham}]
If $pq > 90(m-1)^{2}$, then by Corollary \ref{maxlkcor} we have 
\begin{eqnarray*}
\mathop{\rm max}\limits_{\lambda\in \Gamma_{p,q}(K_{n})}
\left|{\rm lk}(f(\lambda))\right|
\ge \frac{\sqrt{pq}}{3\sqrt{10}}
> m-1. 
\end{eqnarray*}
This implies the desired conclusion. 
\end{proof}

\section{Applications to rectilinear spatial complete graphs} 

A spatial embedding $f_{\rm r}$ of a simple graph $G$ is said to be {\it rectilinear} if for any edge $e$ of $G$, $f_{\rm r}(e)$ is a straight line segment in ${\mathbb R}^{3}$. As we mentioned in Section $1$, the rectilinear spatial graph serves as a mathematical model for chemical compounds. Thus from the viewpoint of application to molecular topology, we are interested in the behavior of the nontrivial knots and links in rectilinear spatial graphs. We refer the reader to \cite{B77}, \cite{RA99}, \cite{Hughes06}, \cite{HJ07}, \cite[\S 4]{Nikkuni09}, \cite{Huh12}, \cite[\S 4]{HN14}, \cite{RE14}, \cite{RE15}, \cite[\S 6]{FMMRN17} for works on knots and links in rectilinear spatial graphs, and \cite{FK16}, \cite{FKN19} for works on random rectilinear spatial graphs. See also \cite{MN19} for intrinsic knotting on rectilinear spatial graphs of $K_{n}$ revealed by Theorem \ref{mainthm}. 

Now let us observe intrinsic linking on rectilinear spatial graphs of $K_{n}$ based on Theorem \ref{lkrefine}. Note that every constituent link of a rectilinear spatial graph is a polygonal link with some sticks, and it is well-known that every polygonal $2$-component link with exactly six sticks is a trivial link or a Hopf link (unlinked two triangles or linked two triangles). Then it follows from this fact that the number of ``triangle-triangle'' Hopf links in a rectilinear spatial graph coincides with the sum of ${\rm lk}^{2}$ over all of the constituent triangle-triangle links. Therefore Theorem \ref{lkrefine} implies that for a rectilinear spatial graph of $K_{n}$, the sum of ${\rm lk}^{2}$ over all of the constituent $2$-component Hamiltonian links of any type $(p,q)$ is determined explicitly in terms of the number of triangle-triangle Hopf links. Then we recall the fact that every rectilinear spatial graph of $K_{6}$ contains at most three Hopf links \cite{Hughes06}, \cite{HJ07}, \cite{Nikkuni09}. This also implies that the number of triangle-triangle Hopf links in a rectilinear spatial graph of $K_{n}$ is less than or equal to $3\binom{n}{6}$. Thus by Theorem \ref{lkrefine} and Corollary \ref{Cor9}, we have the following.

\begin{Corollary}\label{Cor92}
Let $n\ge 6$ be an integer and $p,q\ge 3$ two integers satisfying $n=p+q$. For any rectilinear spatial embedding $f_{\rm r}$ of $K_n$, we have 
\begin{eqnarray}\label{ieMN02a}
\frac{n!}{6!} \le \sum_{\lambda\in\Gamma_{p,q}(K_n)}{\rm lk}(f_{\rm r}(\lambda))^2
\le 3\cdot \frac{n!}{6!}
\end{eqnarray}
if $p=q$, and 
\begin{eqnarray}\label{ieMN02b}
2\cdot \frac{n!}{6!} \le \sum_{\lambda\in\Gamma_{p,q}(K_n)}{\rm lk}(f_{\rm r}(\lambda))^2
\le 6\cdot \frac{n!}{6!}
\end{eqnarray}
if $p\neq q$. In particular, we have 
\begin{eqnarray}\label{ieMN2}
(n-5)\cdot \frac{n!}{6!}
\le 
\sum_{p+q=n}
\sum_{\lambda\in\Gamma_{p,q}(K_n)}{\rm lk}(f_{\rm r}(\lambda))^2 
\le
3(n-5)\cdot \frac{n!}{6!}.
\end{eqnarray}
\end{Corollary}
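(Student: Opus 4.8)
The plan is to read the upper bounds off Theorem \ref{lkrefine} after bounding the single quantity $\sum_{\lambda\in\Gamma_{3,3}(K_n)}{\rm lk}(f_{\rm r}(\lambda))^2$ from above; the lower bounds are inherited verbatim from Corollary \ref{Cor9}, since a rectilinear spatial embedding is in particular a spatial embedding.

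First I would invoke the elementary fact recalled before the statement: a constituent link of $f_{\rm r}(K_n)$ of type $(3,3)$ is a polygonal two-component link with exactly six sticks, hence it is either a trivial link or a Hopf link. Consequently ${\rm lk}(f_{\rm r}(\lambda))^2\in\{0,1\}$ for every $\lambda\in\Gamma_{3,3}(K_n)$, and
\begin{eqnarray*}
\sum_{\lambda\in\Gamma_{3,3}(K_n)}{\rm lk}(f_{\rm r}(\lambda))^2
= \#\{\lambda\in\Gamma_{3,3}(K_n)\mid f_{\rm r}(\lambda)\ \text{is a Hopf link}\}.
\end{eqnarray*}

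Next I would observe that each element of $\Gamma_{3,3}(K_n)$ spans exactly six vertices, hence lies in a unique subgraph of $K_n$ isomorphic to $K_6$. Applying to $f_{\rm r}$ restricted to each of the $\binom{n}{6}$ such copies of $K_6$ the known sharp bound that a rectilinear spatial graph of $K_6$ contains at most three Hopf links \cite{Hughes06}, \cite{HJ07}, \cite{Nikkuni09}, and summing over all of them, I obtain
\begin{eqnarray*}
\sum_{\lambda\in\Gamma_{3,3}(K_n)}{\rm lk}(f_{\rm r}(\lambda))^2 \le 3\binom{n}{6} = 3\cdot\frac{n!}{6!\,(n-6)!}.
\end{eqnarray*}
Substituting this inequality into (\ref{ilKK0}) yields the upper bounds in (\ref{ieMN02a}) and (\ref{ieMN02b}): when $p=q$ the factor $(n-6)!$ from Theorem \ref{lkrefine} cancels the $(n-6)!$ in the denominator, giving $3\cdot n!/6!$, and when $p\neq q$ the extra factor of $2$ gives $6\cdot n!/6!$. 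Substituting into (\ref{ilKK}) yields the upper bound in (\ref{ieMN2}), since $(n-5)!\cdot 3\binom{n}{6}=3(n-5)\cdot n!/6!$. Combining these with the lower bounds of Corollary \ref{Cor9} (and its summed version for (\ref{ieMN2})) completes the argument.

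There is no real obstacle: the proof is a short consolidation of Theorem \ref{lkrefine}, Corollary \ref{Cor9}, the six-stick classification of polygonal two-component links, and the sharp count of Hopf links in a rectilinear $K_6$. The only point requiring a moment of care is the binomial bookkeeping, namely verifying that the normalising factors $(n-6)!$ and $(n-5)!$ coming out of Theorem \ref{lkrefine} turn $3\binom{n}{6}$ into precisely $3\cdot n!/6!$ and $3(n-5)\cdot n!/6!$; one should also remember that the lower bounds need no rectilinearity hypothesis at all.
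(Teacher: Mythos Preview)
Your proposal is correct and follows essentially the same approach as the paper: both derive the lower bounds directly from Corollary~\ref{Cor9}, and both obtain the upper bounds by invoking the six-stick classification of polygonal two-component links together with the fact that a rectilinear $K_6$ contains at most three Hopf links to bound $\sum_{\lambda\in\Gamma_{3,3}(K_n)}{\rm lk}(f_{\rm r}(\lambda))^2\le 3\binom{n}{6}$, and then feed this into Theorem~\ref{lkrefine}. Your explicit observation that each $\lambda\in\Gamma_{3,3}(K_n)$ lies in a unique $K_6$ is exactly the counting the paper alludes to implicitly.
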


\begin{Remark}\label{ineq_sharp_recti}
As it was pointed out in \cite[Remark 2.7]{MN19}, the lower bound of (\ref{lb}) is also realized by ``standard'' rectilinear spatial embedding of $K_{n}$. This implies that the lower bound of each of the inequalities in Corollary \ref{Cor92} is sharp. But we do not think that the upper bound is also sharp if $n\ge 7$, see Example \ref{ub2}. 
\end{Remark}

\begin{Example}\label{ub2}
Let $f_{\rm r}$ be a rectilinear spatial embedding of $K_{7}$. Then by (\ref{ieMN02b}) and (\ref{k7lkcongr}), we have 
\begin{eqnarray*}\label{k7lkv}
14\le \sum_{\lambda\in \Gamma_{3,4}(K_{7})}{\rm lk}(f_{\rm r}(\lambda))^{2}\le 42,\ \ \sum_{\lambda\in \Gamma_{3,4}(K_{7})}{\rm lk}(f_{\rm r}(\lambda))^{2}\equiv 2\pmod{4}. 
\end{eqnarray*}
However, according to a computer search in Jeon et al. \cite{Jeon10}, there seems to be no rectilinear  embedding $f_{\rm r}$ of $K_{7}$ such that $\sum_{\lambda\in \Gamma_{3,4}(K_{7})}{\rm lk}(f_{\rm r}(\lambda))^{2} = 38, 42$, or equivalently by (\ref{ilKK0}), $\sum_{\lambda\in \Gamma_{3,3}(K_{7})}{\rm lk}(f_{\rm r}(\lambda))^{2} = 19,21$. This suggests that the upper bound in Corollary \ref{Cor92} cannot be expected to be sharp if $n\ge 7$. 
\end{Example}

The problem of determining the sharp upper bound of the sum of ${\rm lk}^{2}$ over all of the constituent $2$-component Hamiltonian links of each type for all rectilinear spatial graphs of $K_{n}$ is equivalent to the following problem, that is also equvalent to a problem which has already been stated by the authors in \cite[Problem 3.4]{MN19}. 

\begin{Problem}
Determine the maximum number of constituent triangle-triangle Hopf links for all rectilinear spatial graphs of $K_{n}$ for each $n\ge 7$. 
\end{Problem}

Finally, let us show similar results as Corollaries \ref{maxlkcor} and \ref{lkmaxham} for the maximum value of $a_{2}$ over all of the Hamiltonian knots of a rectilinear spatial graph of $K_{n}$. It is also well-known that every polygonal knot with less than or equal to five sticks is trivial. Then by combining this fact and (\ref{lb}) with Theorem \ref{mainthm}, we have 
\begin{eqnarray}\label{maintheoremcor}
\sum_{\gamma\in \Gamma_{n}(K_{n})}a_{2}(f_{\rm r}(\gamma))
\ge  
\frac{(n-5)(n-6)\cdot (n-1)!}{2\cdot 6!} 
\end{eqnarray}
for any rectilinear spatial embedding $f_{\rm r}$ of $K_{n}\ (n\ge 6)$ \cite[Corollary 1.7]{MN19}. Then we have the following. 

\begin{Corollary}\label{maxa2cor}
Let $n\ge 6$ be an integer. For any rectilinear spatial embedding $f_{\rm r}$ of $K_n$, we have 
\begin{eqnarray*}\label{maxa2}
\mathop{\rm max}\limits_{\gamma\in \Gamma_{n}(K_{n})}
a_{2}(f_{\rm r}(\gamma))
\ge
\frac{(n-5)(n-6)}{6!}. 
\end{eqnarray*}
\end{Corollary}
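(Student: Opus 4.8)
The plan is to obtain the bound by a simple averaging argument applied to the lower bound (\ref{maintheoremcor}) on the total sum of $a_{2}$ over all Hamiltonian knots, exactly in the spirit of the proof of Corollary \ref{maxlkcor}.

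First I would record that $K_{n}$ has exactly $(n-1)!/2$ Hamiltonian cycles, that is $\abs{\Gamma_{n}(K_{n})} = (n-1)!/2$. Next, for any rectilinear spatial embedding $f_{\rm r}$ of $K_{n}$, the inequality (\ref{maintheoremcor}) gives
\[
\sum_{\gamma\in \Gamma_{n}(K_{n})}a_{2}(f_{\rm r}(\gamma))
\ge \frac{(n-5)(n-6)\cdot (n-1)!}{2\cdot 6!}.
\]
Since the maximum of any finite family of real numbers is at least its arithmetic mean, dividing by $\abs{\Gamma_{n}(K_{n})} = (n-1)!/2$ yields
\[
\mathop{\rm max}\limits_{\gamma\in \Gamma_{n}(K_{n})}a_{2}(f_{\rm r}(\gamma))
\ge \frac{1}{\abs{\Gamma_{n}(K_{n})}}\sum_{\gamma\in \Gamma_{n}(K_{n})}a_{2}(f_{\rm r}(\gamma))
\ge \frac{2}{(n-1)!}\cdot \frac{(n-5)(n-6)\cdot (n-1)!}{2\cdot 6!}
= \frac{(n-5)(n-6)}{6!},
\]
which is the claimed inequality.

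There is essentially no serious obstacle here. The only subtlety worth noting is that $a_{2}$ need not be nonnegative on every constituent Hamiltonian knot, so one cannot simply isolate a single dominant term; but the maximum-versus-average inequality is insensitive to signs, so the argument goes through unchanged. (If one wanted the proof to be entirely self-contained, one could also re-derive (\ref{maintheoremcor}) on the spot from Theorem \ref{mainthm}, using the rectilinear fact that every polygonal knot with at most five sticks is trivial, so that $\sum_{\gamma\in\Gamma_{5}(K_{n})}a_{2}(f_{\rm r}(\gamma))=0$, together with (\ref{lb}) and Pascal's identity $\binom{n}{6}-\binom{n-1}{5}=\binom{n-1}{6}$; but since (\ref{maintheoremcor}) is already available this step is unnecessary.)
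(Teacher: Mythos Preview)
Your proof is correct and is essentially identical to the paper's own argument: the paper also records $\abs{\Gamma_{n}(K_{n})}=(n-1)!/2$, bounds the sum from below by (\ref{maintheoremcor}), and then uses the inequality $\max\cdot\abs{\Gamma_{n}(K_{n})}\ge\sum$ to conclude. Your additional remarks about the sign of $a_{2}$ and the optional rederivation of (\ref{maintheoremcor}) are accurate but not needed.
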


\begin{proof}
Note that the number of all elements in $\Gamma_{n}(K_{n})$ is $(n-1)!/2$. 
Then by (\ref{maintheoremcor}), we have 
\begin{eqnarray*}\label{a2n}
\mathop{\rm max}\limits_{\lambda\in \Gamma_{n}(K_{n})}
a_{2}(f_{\rm r}(\gamma))
\cdot \frac{(n-1)!}{2}
\ge \sum_{\lambda\in \Gamma_{n}(K_{n})}a_{2}(f_{\rm r}(\gamma))
\ge \frac{(n-5)(n-6)\cdot (n-1)!}{2\cdot 6!}.
\end{eqnarray*}
Thus we have the result. 
\end{proof}

Corollary \ref{maxa2cor} says that if $n$ is sufficiently large then every rectilinear spatial graph of $K_{n}$ contains a Hamiltonian knot whose value of $a_{2}$ is arbitrary large. Actually we have the following. 

\begin{Corollary}\label{a2maxham}
Let $n\ge 6$ be a positive integer. For a rectilinear spatial embedding $f_{\rm r}$ of $K_{n}$ and a positive integer $m$, if $n> (11+\sqrt{2880m-2879})/2$ then there exists a cycle $\gamma\in \Gamma_{n}(K_{n})$ such that $a_{2}(f_{\rm r}(\gamma))\ge m$. 
\end{Corollary}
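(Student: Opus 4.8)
The plan is to obtain Corollary~\ref{a2maxham} as the arithmetic repackaging of Corollary~\ref{maxa2cor}, in exactly the same spirit as Corollary~\ref{lkmaxham} was deduced from Corollary~\ref{maxlkcor}. The only external input, besides Corollary~\ref{maxa2cor}, is the elementary fact that the second coefficient $a_{2}$ of the Conway polynomial of any knot is an integer.

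First I would apply Corollary~\ref{maxa2cor} to the given rectilinear embedding $f_{\rm r}$ of $K_{n}$, which yields
\[
\max_{\gamma\in\Gamma_{n}(K_{n})}a_{2}(f_{\rm r}(\gamma))\ \ge\ \frac{(n-5)(n-6)}{6!}.
\]
Next I would translate the hypothesis $n>(11+\sqrt{2880m-2879})/2$ into a lower bound for the right-hand side. Writing $(n-5)(n-6)=n^{2}-11n+30$ and $6!=720$, the inequality $(n-5)(n-6)/6!>m-1$ is equivalent to $n^{2}-11n+30>720(m-1)$, i.e.\ to $n^{2}-11n+(750-720m)>0$. The quadratic $x^{2}-11x+(750-720m)$ has discriminant $121-4(750-720m)=2880m-2879$, which is positive for every positive integer $m$, so its roots are the two real numbers $(11\pm\sqrt{2880m-2879})/2$; since the leading coefficient is positive, the quadratic is positive precisely when $x$ lies outside the closed interval between these roots, in particular when $x>(11+\sqrt{2880m-2879})/2$. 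Hence the hypothesis on $n$ is exactly the condition $(n-5)(n-6)/6!>m-1$.

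Combining the two displayed facts gives $\max_{\gamma}a_{2}(f_{\rm r}(\gamma))>m-1$, and since this maximum is an integer it must be at least $m$, which is the assertion. There is no genuine obstacle here: all of the topological content has already been packaged into Corollary~\ref{maxa2cor} (hence ultimately into Theorem~\ref{mainthm} together with the ``at most five sticks implies trivial'' estimate underlying (\ref{maintheoremcor})), and the remaining work is the routine verification, sketched above, that solving the quadratic inequality in $n$ reproduces the stated bound. The one small point to be careful about is the direction of the equivalence between ``$n$ exceeds the larger root'' and ``the quadratic is positive'', which is settled by the sign of the leading coefficient together with positivity of the discriminant for all $m\ge 1$.
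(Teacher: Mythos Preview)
Your proposal is correct and follows essentially the same route as the paper's own proof: apply Corollary~\ref{maxa2cor}, verify that the hypothesis on $n$ is equivalent (for $n\ge 6$) to $(n-5)(n-6)/6!>m-1$, and conclude using integrality of $a_{2}$. Your write-up simply makes explicit the quadratic-formula computation and the integrality step that the paper leaves to the reader.
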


\begin{proof}
For an integer $n\ge 6$ and a positive integer $m$, we can see that $n> (11+\sqrt{2880m-2879})/2$ if and only if $(n-5)(n-6)/6! > m-1$. Then by Corollary \ref{maxa2cor} we have 
\begin{eqnarray*}
\mathop{\rm max}\limits_{\lambda\in \Gamma_{n}(K_{n})}
a_{2}(f_{\rm r}(\gamma))
\ge \frac{(n-5)(n-6)}{6!}
> m-1. 
\end{eqnarray*}
This implies the desired conclusion. 
\end{proof}

\begin{Remark}\label{sta2gen}
It has already been known that for any positive integer $m$, there exists a positive integer $n$ such that every spatial graph of $K_{n}$ (which does not need to be rectilinear) contains a knot $K$ with $a_{2}(K)\ge m$ \cite{F02}, \cite{ST03}. In partucular, Shirai-Taniyama showed in \cite{ST03} that for any spatial embedding $f$ of $K_{n}$, if $n\ge 96\sqrt{m}$ there exists a cycle $\gamma$ of $K_{n}$ such that $a_{2}(f(\gamma))\ge m$. Moreover, if $m=2^{2k}$ for some non-negative integer $k$, then $n=48\sqrt{m}$ is sufficient. Note that in their argument, we cannot know whether $f(\gamma)$ is Hamiltonian or not. Corollary \ref{a2maxham} says that if we restrict ourselves to rectilinear spatial embeddings, then $n> (11+\sqrt{2880m-2879})/2$ is sufficient, and rectilinear spatial graph of $K_{n}$ always contains a Hamiltonian knot $K$ with $a_{2}(K)\ge m$. 
\end{Remark}

\section*{Acknowledgment}

The authors are grateful to Professors Jae Choon Cha and Ayumu Inoue for their valuable comments.

{\normalsize
}

\end{document}